\documentclass[12pt]{article}

\setlength{\textwidth}{6.3in}
\setlength{\textheight}{8.7in}
\setlength{\topmargin}{0pt}
\setlength{\headsep}{0pt}
\setlength{\headheight}{0pt}
\setlength{\oddsidemargin}{0pt}
\setlength{\evensidemargin}{0pt}

\title{Combinatorics of the three-parameter PASEP partition function}
\author{Matthieu Josuat-Verg\`es\thanks{Partially supported by the grant ANR08-JCJC-0011.}\\
\small Universit\'e Paris-sud and LRI, \\
\small 91405 Orsay CEDEX, FRANCE.  \\
\small \texttt{josuat@lri.fr}\\
}

\date{ \small Mathematics Subject Classifications: 05A15, 05A19, 82B23, 60C05.}

\usepackage[english]{babel}
\usepackage[utf8]{inputenc}
\usepackage{amsmath}
\usepackage{amsfonts,amssymb,latexsym,graphics,amsthm}
\usepackage[bookmarks=false]{hyperref}
\usepackage[dvips]{color}
\usepackage[dvips]{graphicx}
\usepackage{pstricks,rotating}
\usepackage{url}

\newcommand{\ket}[1]{\ensuremath{|#1\rangle}}
\newcommand{\bra}[1]{\ensuremath{\langle #1|}}
\newcommand{\braket}[2]{\ensuremath{\langle #1|#2 \rangle}}

\newcommand{\alp}{\tilde\alpha}
\newcommand{\bet}{\tilde\beta}
\newcommand{\qbin}[2]{\genfrac{[}{]}{0pt}{}{#1}{#2}_q}
\newcommand{\tqbin}[2]{\genfrac{[}{]}{0pt}{1}{#1}{#2}_q}
\newcommand{\earrow}{\psset{unit=4.25mm}\begin{pspicture}(0.2,0)(1,0) 
    \psline[linestyle=dotted,dotsep=0.5mm,arrowsize=1.3mm,arrowlength=0.7,arrowinset=0.6]{->}(0.2,0.2)(1,0.2)
   \end{pspicture}}

\newcommand{\wex}{{\rm wex}}
\newcommand{\asc}{{\rm asc}}
\newcommand{\cro}{{\rm cr}}
\newcommand{\ret}{{\rm ret}}
\newcommand{\mot}{{\rm \hbox{31-2}}}
\newcommand{\pk}{{\rm pk}}

\hypersetup{
    unicode=false,          
    pdftoolbar=true,        
    pdfmenubar=true,        
    pdffitwindow=false,     
    pdfstartview={FitH},    
    pdftitle={Combinatorics of the three-parameter PASEP partition function}, 
    pdfauthor={Matthieu Josuat-Vergès (Université Paris XI)},
    pdfsubject={},
    pdfcreator={Creator},   
    pdfproducer={Producer}, 
    pdfkeywords={},
    pdfnewwindow=true,      
    colorlinks=true,       
    linkcolor=red,          
    citecolor=blue,        
    filecolor=magenta,      
    urlcolor=cyan           
}

\newtheorem{thm}{Theorem}[subsection]

\newtheorem{lem}[thm]{Lemma}
\newtheorem{prop}[thm]{Proposition}

\theoremstyle{definition}
\newtheorem{defn}[thm]{Definition}
\newtheorem{rem}[thm]{Remark}

\begin{document}

\maketitle

\begin{abstract}
We consider a partially asymmetric exclusion process (PASEP) on a finite number of sites 
with open and directed boundary conditions. Its partition function was calculated by Blythe, 
Evans, Colaiori, and Essler. It is known to be a generating function of permutation tableaux 
by the combinatorial interpretation of Corteel and Williams.

We prove bijectively two new combinatorial interpretations. The first one is in terms of 
weighted Motzkin paths called Laguerre histories and is obtained by refining a bijection of 
Foata and Zeilberger. Secondly we show that this partition function is the generating function 
of permutations with respect to right-to-left minima, right-to-left maxima, ascents, and 31-2 
patterns, by refining a bijection of Françon and Viennot.

Then we give a new formula for the partition function which generalizes the one of Blythe \& al. 
It is proved in two combinatorial ways. The first proof is an enumeration of lattice paths which 
are known to be a solution of the Matrix Ansatz of Derrida \& al. The second proof relies on a 
previous enumeration of rook placements, which appear in the combinatorial interpretation of a 
related normal ordering problem. We also obtain a closed formula for the moments of 
Al-Salam-Chihara polynomials.
\end{abstract}

\section{Introduction}

\subsection{The PASEP partition function}

The partially asymmetric simple exclusion process (also called PASEP) is a Markov chain describing
the evolution of particles in $N$ sites arranged in a line, each site being either empty or 
occupied by one particle. Particles may enter the leftmost site at a rate $\alpha\geq0$, go out 
the rightmost site at a rate $\beta\geq0$, hop left at a rate $q\geq0$ and hop right at a rate 
$p>0$ when possible. By rescaling time it is always possible to assume that the latter parameter 
is 1 without loss of generality. It is possible to define either a continuous-time model or a 
discrete-time model, but they are equivalent in the sense that their stationary distributions are 
the same. In this article we only study some combinatorial properties of the partition function. 
For precisions, background about the model, and much more, we refer to 
\cite{BECE, BCEPR, CW1, CW2, DEHP, TS}. 
We refer particularly to the long survey of Blythe and Evans \cite{BE} and all references therein 
to give evidence that this is a widely studied model. Indeed, it is quite rich and some important
features are the various phase transitions, and spontaneous symmetry breaking for example, so that 
it is considered as a fundamental model of nonequilibrium statistical physics.

A method to obtain the stationary distribution and the partition function $Z_N$ of the model is 
the Matrix Ansatz of Derrida, Evans, Hakim and Pasquier \cite{DEHP}. We suppose that $D$ and $E$ 
are linear operators, $\bra{W}$ is a vector, $\ket{V}$ is a linear form, such that:
\begin{equation} \label{ansatz}
DE-qED=D+E, \qquad \bra{W} \alpha E = \bra{W}, \qquad  \beta D\ket{V} = \ket{V},\qquad  \braket WV=1,
\end{equation}
then the non-normalized probability of each state can be obtained by taking the product
$\bra{W} t_1\dots t_N \ket{V}$ where $t_i$ is $D$ if the $i$th site is occupied and $E$ if it is 
empty. It follows that the normalization, or partition function, is given by $\bra{W}(D+E)^N\ket{V}$. 
It is possible to introduce another variable $y$, which is not a parameter of the probabilistic model, 
but is a formal parameter such that the coefficient of $y^{k}$ in the partition function corresponds 
to the states with exactly $k$ particles (physically it could be called a {\it fugacity}). The partition 
function is then:
\begin{equation} \label{ZN}
   Z_N = \bra{W}(yD+E)^N\ket{V},
\end{equation}
which we may take as a definition in the combinatorial point of view of this article 
(see Section~\ref{Zfacts} below for precisions). An interesting property is the symmetry:
\begin{equation} \label{Z_sym}
   Z_N\big(\alpha,\beta,y,q\big) = y^N Z_N\big(\beta,\alpha,\tfrac1y,q\big),
\end{equation}
which can be seen on the physical point of view by exchanging the empty sites with occupied sites.
It can also be obtained from the Matrix Ansatz by using the transposed matrices $D^*$ and $E^*$
and the transposed vectors $\bra{V}$ and $\ket{W}$, which satisfies a similar Matrix Ansatz with
$\alpha$ and $\beta$ exchanged.

\smallskip

In section \ref{paths}, we will use an explicit solution of the Matrix Ansatz \cite{BECE,BCEPR,DEHP},
and it will permit to make use of weighted lattice paths as in \cite{BCEPR}.

\subsection{Combinatorial interpretations}

Corteel and Williams showed in \cite{CW1,CW2} that the stationary distribution of the PASEP
(and consequently, the partition function) has a natural combinatorial interpretation in terms of 
{\it permutation tableaux} \cite{SW}. This can be done by showing that the two operators $D$ and $E$
of the Matrix Ansatz describe a recursive construction of these objects. They have in particular:
\begin{equation} \label{ZTP}
    Z_N = \sum_{T\in PT_{N+1}}  
    \alpha^{-a(T)} \beta^{-b(T)+1} y^{r(T)-1} q^{w(T)},
\end{equation}
where $PT_{N+1}$ is the set of permutation tableaux of size $N+1$, $a(T)$ is the number 
of 1s in the first row, $b(T)$ is the number of unrestricted rows, $r(T)$ is the 
number of rows, and $w(T)$ is the number of superfluous 1s. See Definition \ref{def_PT}
below, and \cite[Theorem~3.1]{CW2} for the original statement. Permutation tableaux are
interesting because of their link with permutations, and it is possible to see $Z_N$ as 
a generating function of permutations. Indeed thanks to the Steingrímsson-Williams 
bijection \cite{SW}, it is also known that \cite{CW2}:
\begin{equation} \label{depart}
    Z_N = \sum_{\sigma\in\mathfrak{S}_{N+1}}  
    \alpha^{ -u(\sigma) } \beta^{ -v(\sigma) } y^{\wex(\sigma)-1} q^{\cro(\sigma)},
\end{equation}
where we use the statistics in the following definition.

\begin{defn} \label{stat_per} Let $\sigma\in\mathfrak{S}_n$. Then:
\begin{itemize}
\item $u(\sigma)$ the number of {\it special} right-to-left minima, {\it i.e.} integers
      $j\in\{1,\dots,n\}$ such that $\sigma(j)=\hbox{min}_{j\leq i\leq n}\sigma(i)$ and
      $\sigma(j)<\sigma(1)$, 
\item $v(\sigma)$ is the number of {\it special} left-to-right maxima, {\it i.e.} integers
      $j\in\{1,\dots,n\}$ such that $\sigma(j)=\hbox{max}_{1\leq i\leq j}\sigma(i)$ and 
      $\sigma(j)>\sigma(1)$,
\item $\wex(\sigma)$ is the number of {\it weak exceedances} of $\sigma$, {\it i.e.} integers
      $j\in\{1,\dots,n\}$ such that $\sigma(j)\geq j$,
\item and $\cro(\sigma)$ is the number of {\it crossings}, {\it i.e.} pairs $(i,j)\in\{1,\dots,n\}^2$ 
      such that either $i<j\leq \sigma(i) <\sigma(j)$ or $\sigma(i)<\sigma(j)<i<j$.
\end{itemize}
\end{defn}
It can already be seen that Stirling numbers and Eulerian numbers appear as special cases of $Z_N$.
We will show that it is possible to follow the statistics in \eqref{depart} through the weighted 
Motzkin paths called {\it Laguerre histories} (see \cite{Co,XGV} and Definition \ref{def_hist} below), 
thanks to the bijection of Foata and 
Zeilberger \cite{Co,FZ,MV}. But we need to study several subtle properties of the bijection to 
follow all four statistics. We obtain a combinatorial interpretation of $Z_N$ in terms of 
{\it Laguerre histories}, see Theorem~\ref{histoires} below. Even more, we will show that the
four statistics in Laguerre histories can be followed through the bijection of Françon and Viennot 
\cite{Co,FV}. Consequently we will obtain in Theorem~\ref{main} below a second new combinatorial
interpretation:
\begin{equation} \label{main_int}
    Z_N = \sum_{\sigma\in\mathfrak{S}_{N+1}}  
    \alpha^{-s(\sigma)+1} \beta^{-t(\sigma)+1} y^{\asc(\sigma)-1}q^{\hbox{\scriptsize \mot}(\sigma)},
\end{equation}
where we use the statistics in the next definition. This was already known in the case $\alpha=1$,
see \cite{Co,CN}.

\begin{defn} \label{stat_per2} Let $\sigma\in\mathfrak{S}_n$. Then:
\begin{itemize}
\item $s(\sigma)$ is the number of right-to-left maxima of $\sigma$ and
      $t(\sigma)$ is the number of right-to-left minima of $\sigma$,
\item $\asc(\sigma)$ is the number of {\it ascents}, {\it i.e.} integers $i$ such that either $i=n$ 
      or $1\leq i \leq n-1$ and $\sigma(i)<\sigma(i+1)$,
\item 31-2$(\sigma)$ is the number of generalized patterns 31-2 in $\sigma$, {\it i.e.} triples of
      integers $(i,i+1,j)$ such that $1\leq i<i+1<j\leq n$ and $\sigma(i+1)<\sigma(j)<\sigma(i)$.
\end{itemize}
\end{defn}

\subsection{Exact formula for the partition function}

An exact formula for $Z_N$ was given by Blythe, Evans, Colaiori, Essler \cite[Equation (57)]{BECE} 
in the case where $y=1$. It was obtained from the eigenvalues and eigenvectors of the operator 
$D+E$ as defined in \eqref{defD} and \eqref{defE} below. This method gives an integral form
for $Z_N$, which can be simplified so as to obtain a finite sum rather than an integral. Moreover 
this expression for $Z_N$ was used to obtain various properties of the large system size limit, 
such as phases diagrams and currents. Here we generalize this result since we also have the 
variable $y$, and the proofs are combinatorial. This is an important result since it is generally
accepted that most interesting properties of a model can be derived from the partition function.

\begin{thm}\label{Z_th}
Let $\alp=(1-q)\frac1\alpha-1$ and $\bet=(1-q)\frac1\beta-1$. We have:
\begin{equation} \label{Z}
    Z_N = \frac 1{(1-q)^N} \sum_{n=0}^N R_{N,n}(y,q) B_n( \alp , \bet , y, q),
\end{equation}
where
\begin{equation}
    R_{N,n}(y,q) = \sum_{i=0}^{\lfloor \frac{N-n}2 \rfloor } (-y)^i
    q^{\binom{i+1}2} \tqbin{n+i}i
    \sum_{j=0}^{N-n-2i}y^j\left( \tbinom Nj \tbinom N{n+2i+j}-
     \tbinom N{j-1} \tbinom N{n+2i+j+1}  \right)
\end{equation}
and
\begin{equation}
    B_n(\alp,\bet,y,q) = \sum_{k=0}^n \qbin nk \alp^k (y\bet)^{n-k}.
\end{equation}
\end{thm}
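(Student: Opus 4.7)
The plan is to give two combinatorial derivations, in the spirit announced in the abstract.

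For the first proof, I would start from the lattice-path expansion of $Z_N$ developed in Section~\ref{paths}: with an explicit representation of $D,E,\bra{W},\ket{V}$ satisfying~\eqref{ansatz} on a space with basis indexed by $\mathbb{N}$, the quantity $\bra{W}(yD+E)^N\ket{V}$ unfolds as a generating function over weighted Motzkin paths of length $N$, where the step weights are polynomials in $y,q$ and the boundary parameters $\alpha,\beta$ enter only through the weights of steps touching the $x$-axis. After pulling out the global factor $(1-q)^{-N}$, these boundary parameters naturally rescale into $\alp$ and $\bet$. I would then decompose each Motzkin path into its subsequence of $x$-axis contact steps (of which there are some number $n$) and the sequence of elevated arches sitting between them.

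Summing first over the $x$-axis contributions should, by standard $q$-binomial arithmetic, collapse to the factor $B_n(\alp,\bet,y,q)$, which is precisely the $q$-analogue of $(\alp+y\bet)^n$. The remaining piece is a sum over Motzkin paths of length $N$ with exactly $n$ designated axis steps and strictly positive arches in between, weighted by $y$ and $q$; I claim this factor matches $R_{N,n}(y,q)$. Establishing this identity is the heart of the argument. The explicit formula for $R_{N,n}$ contains a ballot-like difference of binomial products $\binom{N}{j}\binom{N}{n+2i+j}-\binom{N}{j-1}\binom{N}{n+2i+j+1}$, which strongly suggests a reflection-principle argument for lattice paths forbidden to cross a line; the outer weighting $(-y)^iq^{\binom{i+1}{2}}\tqbin{n+i}{i}$ then fits the pattern of a $q$-inclusion–exclusion over the number $i$ of extra axis returns, with $q$ tracking the enclosed area under the arches.

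For the second proof I would invoke the known rook-placement interpretation of the normal-ordered expansion of $(yD+E)^N$: the coefficients in a suitable normal form enumerate rook placements on a staircase Young diagram, with $y$ and $q$ tracking the number of rooks and an area statistic. Applied with the boundary conditions from~\eqref{ansatz} and regrouped by the number $n$ of rooks on a designated diagonal band, a previous enumeration by the author should factor as a diagonal contribution that collapses, after the $(1-q)$-rescaling, to $B_n(\alp,\bet,y,q)$, times an off-diagonal contribution equal to $R_{N,n}(y,q)$. The main obstacle, common to both approaches, is arriving at the explicit closed form of $R_{N,n}(y,q)$ stated in the theorem rather than at an equivalent but opaque summation: the lattice-path route requires a $q$-analogue of the reflection principle combined with inclusion–exclusion on return points, and the rook-placement route requires a careful regrouping that preserves the $q$-weights. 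A secondary subtlety is ensuring the clean factorization of $\alp$ and $\bet$ after the $(1-q)^{-N}$ rescaling, which depends on a judicious choice of the explicit representation of $D,E,\bra{W},\ket{V}$.
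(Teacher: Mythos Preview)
Your first-proof strategy rests on a factual error about the lattice-path representation. You write that ``the boundary parameters $\alpha,\beta$ enter only through the weights of steps touching the $x$-axis,'' and then propose to decompose each Motzkin path by its sequence of $x$-axis contacts. But look at the weights in~\eqref{Z_w}: a horizontal step at height $h$ carries $(1+y)+(\alp+y\bet)q^h$ and a down step from height $h$ carries $y(1-\alp\bet q^{h-1})$. The parameters $\alp,\bet$ appear at \emph{every} height, not only at the axis. So a first-return decomposition does not isolate the $\alp,\bet$-dependence into a separate factor, and there is no reason for the axis-contact contribution to collapse to $B_n(\alp,\bet,y,q)$.

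The paper's first proof uses a genuinely different decomposition $\Phi$ (Proposition~\ref{decomp}). Each horizontal step of weight $(1+y)+(\alp+y\bet)q^h$ is split into a choice between a ``plain'' step of weight $1+y$ and an $\alp,\bet$-carrying step of weight $(\alp+y\bet)q^h$; similarly each down step splits into a $y$-step and a $(-y\alp\bet q^{h-1})$-step, and each up step into $h+1$ steps of weight $q^i-q^{i+1}$. The bijection $\Phi$ then peels off, from a path in $\mathfrak{P}_N$, the subsequence of $\alp,\bet$-carrying steps together with matching up-steps, producing a pair $(H_1,H_2)\in\mathfrak{R}^*_{N,n}\times\mathfrak{B}^*_n$ where $n$ is the number of $q^h$-weighted horizontal steps in $H_1$ (not the number of axis contacts). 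The factor $B_n$ is then the sum over $\mathfrak{B}_n$, identified in Proposition~\ref{Bn} with the $n$th Al-Salam--Carlitz moment; the factor $R_{N,n}$ is handled by Lemmas~\ref{decomp2}--\ref{core}, which is where the reflection-principle binomial difference and the $q^{\binom{i+1}{2}}\tqbin{n+i}{i}$ you noticed actually arise.

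Your sketch of the second proof is closer in spirit to Section~\ref{rooks}, but the organizing index $n$ is again not a ``diagonal band'' of rooks: it is $n=i+j$ in the normal form $(qy\hat D+q\hat E)^k=\sum d^{(k)}_{i,j}(q\hat E)^i(qy\hat D)^j$ of Proposition~\ref{hats}, after which a binomial expansion and Lemma~\ref{idbinl} give $R_{N,n}$.
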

In the case where $y=1$, one sum can be simplified by the Vandermonde identity 
$\sum_j \binom Nj \binom N{m-j}=\binom{2N}m$, and we recover the expression 
given in \cite[Equation (54)]{BECE} by Blythe \& al:
\begin{equation}
    R_{N,n}(1,q) = \sum_{i=0}^{\lfloor \frac{N-n}2 \rfloor } (-1)^i
    \left( \tbinom{2N}{N-n-2i} - \tbinom{2N}{N-n-2i-2} \right)
    q^{\binom{i+1}2} \tqbin{n+i}i.
\end{equation}
In the case where $\alpha=\beta=1$, it was known \cite{CJPR,MJV} that $(1-q)^{N+1}Z_N$ is equal to:
\begin{equation} \label{cas1}
  \sum\limits_{k=0}^{N+1} (-1)^k
    \Bigg( \sum\limits_{j=0}^{N+1-k} y^j\Big( \tbinom{N+1}{j}\tbinom{N+1}{j+k} -
    \tbinom{N+1}{j-1}\tbinom{N+1}{j+k+1}\Big) \Bigg) 
  \left( \sum\limits_{i=0}^k y^iq^{i(k+1-i)} \right)
\end{equation}
(see Remarks \ref{comp1} and \ref{comp2} for a comparison between this previous result and the 
new one in Theorem~\ref{Z_th}). And in the case where $y=q=1$, from a recursive construction of
permutation tableaux \cite{CN} or lattice paths combinatorics \cite{BCEPR} it is known that:
\begin{equation}
  Z_{N} =  \prod_{i=0}^{N-1} \left(\frac1\alpha + \frac1\beta + i\right).
\end{equation}

The first proof of \eqref{Z} is a purely combinatorial enumeration of some weighted Motzkin
paths defined below in \eqref{Z_w}, appearing from explicit representations of the operators
$D$ and $E$ of the Matrix Ansatz. It partially relies on results of \cite{CJPR,MJV} through 
Proposition~\ref{RNn} below. In contrast, the second proof does not use a particular
representation of the operators $D$ and $E$, but only on the combinatorics of the normal ordering
process. It also relies on previous results of \cite{MJV} (through Proposition~\ref{hats} below),
but we will sketch a self-contained proof.

This article is organized as follows. In Section~\ref{Zfacts} we recall known facts about the
PASEP partition function $Z_N$, mainly to explain the Matrix Ansatz. In Section~\ref{bij} we
prove the two new combinatorial interpretations of $Z_N$, starting from \eqref{depart} 
and using various properties of bijections of Foata and Zeilberger, Françon and Viennot. 
Sections \ref{paths} and \ref{rooks} respectively contain the the two proofs of the exact 
formula for $Z_N$ in Equation \eqref{Z}. In Section~\ref{ALSC} we show that the first proof
of the exact formula for $Z_N$ can be adapted to give a formula for the moments of 
Al-Salam-Chihara polynomials. Finally in Section~\ref{num} we review the numerous classical
integer sequences which appear as specializations or limit cases of $Z_N$.

\section*{Acknowledgement}

I thank my advisor Sylvie Corteel for her advice, support, help and kindness.
I thank Einar Steingrímsson, Lauren Williams and Jiang Zeng for their help.

\section{Some known properties of the partition function $Z_N$}
\label{Zfacts}

As said in the introduction, the partition function $Z_N$ can be derived by taking the 
product $\bra{W}(yD+E)^N\ket{V}$ provided the relations \eqref{ansatz} are satisfied.
It may seem non-obvious that $\bra{W}(yD+E)^N\ket{V}$ does not depend on a particular 
choice of the operators $D$ and $E$, and the existence of such operators $D$ and $E$ 
is not clear.

\smallskip

The fact that $\bra{W}(yD+E)^N\ket{V}$ is well-defined without making $D$ and $E$ explicit,
in a consequence of the existence of normal forms. More precisely, via the commutation 
relation $DE-qED=D+E$ we can derive polynomials $c^{(N)}_{i,j}$ in $y$ and $q$ with 
non-negative integer coefficients such that we have the {\it normal form}:
\begin{equation}
   (yD+E)^N = \sum_{i,j\geq 0} c^{(N)}_{i,j} E^i D^j
\end{equation}
(this is a finite sum). See \cite{BHPSD} for other combinatorial interpretation of normal 
ordering problems. It turns out that the $c^{(N)}_{i,j}$ are uniquely defined if we require
the previous equality to hold for any value of $\alpha$, $\beta$, $y$ and $q$, considered as
indeterminates. Then the partition function is:
\begin{equation}
   Z_N(\alpha,\beta,y,q) = \bra{W}(yD+E)^N\ket{V} = 
   \sum_{i,j\geq 0} c^{(N)}_{i,j} \alpha^{-i} \beta^{-j}.
\end{equation}
Indeed, this expression is valid for any choice of $\bra{W}$, $\ket{V}$, $D$ and $E$ since we 
only used the relations \eqref{ansatz} to obtain it. In particular $Z_N$ is a polynomial in $y$, 
$q$, $\frac{1}{\alpha}$ and $\frac{1}{\beta}$ with non-negative coefficients. For convenience 
we also define:
\begin{equation}
 \bar Z_N\big(\alpha,\beta,y,q\big)= Z_N\big(\tfrac{1}{\alpha},\tfrac{1}{\beta},y,q\big). 
\end{equation}
For example the first values are:
\[
 \bar Z_0=1, \qquad  \bar  Z_1=\alpha+y\beta, \qquad
 \bar Z_2= \alpha^2 + y(\alpha + \beta + \alpha\beta + \alpha\beta q) + y^2\beta^2,
\]
\begin{eqnarray*}
\bar Z_3 &=& y^3\beta^3 + \left( \alpha\beta^2q+\alpha\beta^2 + \alpha + \alpha\beta + 
       \alpha\beta^2q ^2+\beta+\beta^2 q 
       + 2\,a\beta q + 2\beta^{2} \right) y^2   \\ 
& & + \left( 2\alpha^2+\alpha^2q+\alpha+\beta\alpha^2q^{2} + \beta\alpha^2 + \beta\alpha^2q +
      \alpha \beta + \beta + 2 \alpha \beta q \right) y+\alpha^3.
\end{eqnarray*}

Even if it is not needed to compute the first values of $Z_N$, it is useful to have
explicit matrices $D$ and $E$ satisfying \eqref{ansatz}. The best we could hope is 
finite-dimensional matrices with non-negative entries, however this is known to be 
incompatible with the existence of phase transitions in the model (see section 2.3.3 
in \cite{BE}). Let $\alp=(1-q)\frac{1}{\alpha}-1$ and $\bet=(1-q)\frac{1}{\beta}-1$, 
a solution of the Matrix Ansatz \eqref{ansatz} is given by the following matrices
$D=(D_{i,j})_{i,j\in\mathbb{N}}$ and $E=(E_{i,j})_{i,j\in\mathbb{N}}$ (see \cite{DEHP}):
\begin{equation} \label{defD}
(1-q)D_{i,i}= 1 + \bet q^i, \qquad (1-q)D_{i,i+1}= 1-\alp\bet q^i,
\end{equation}
\begin{equation} \label{defE}
(1-q)E_{i,i}= 1 + \alp q^i, \qquad (1-q)E_{i+1,i}= 1-  q^{i+1},
\end{equation}
all other coefficients being 0, and vectors:
\begin{equation}
\bra{W}=(1,0,0,\dots), \qquad \ket{V} = (1,0,0,\dots)^*,
\end{equation}
({\it i.e.} $\ket{V}$ is the transpose of $\bra{W}$). Even if infinite-dimensional, they
have the nice property of being tridiagonal and this lead to a combinatorial interpretation
of $Z_N$ in terms of lattice paths \cite{BCEPR}. Indeed, we can see $yD+E$ as a transfer matrix 
for walks in the non-negative integers, and obtain that $(1-q)^N Z_N$ is the sum of weights of 
Motzkin paths of length $N$ with weights:
\begin{equation} \label{Z_w}
\parbox{14.5cm}{
\begin{itemize}
\item $1-q^{h+1}$ for a step $\nearrow$ starting at height $h$,
\item $(1+y)+(\alp+y\bet)q^h$ for a step $\rightarrow$ starting at height $h$,
\item $y(1-\alp\bet q^{h-1})$ for a step $\searrow$ starting at height $h$.
\end{itemize}
}
\end{equation}
We recall that a Motzkin path is similar to a Dyck path except that there may be horizontal
steps, see Figures~\ref{ex_FZ}, \ref{fig}, \ref{ex_FV}, \ref{decomp_ex} further. These weighted
Motzkin paths are our starting point to prove Theorem~\ref{Z_th} in Section~\ref{paths}.

We have sketched how the Motzkin paths appear as a combinatorial interpretation of $Z_N$ starting
from the Matrix Ansatz. However it is also possible to obtain a direct link between the PASEP and
the lattice paths, independently of the results of Derrida \& al. This was done by Brak \& al 
in \cite{BCEPR}, in the even more general context of the PASEP with five parameters.

\section{Combinatorial interpretations of $Z_N$}
\label{bij}

In this section we prove the two new combinatorial interpretation of $Z_N$. Firstly we prove
the one in terms of Laguerre histories (Theorem~\ref{histoires} below), by means of a bijection
originally given by Foata and Zeilberger. Secondly we prove the one in terms in permutations 
(Theorem~\ref{main} below).

\subsection{Permutation tableaux and Laguerre histories}
We recall here the definition of permutation tableaux and their statistics needed to state 
the previously known combinatorial interpretation \eqref{ZTP}.

\begin{defn}[\cite{SW}] \label{def_PT}
Let $\lambda$ be a Young diagram (in English notation), possibly with empty rows but with no 
empty column. A complete filling of $\lambda$ with 0's and 1's is a {\it permutation tableau} if:
\begin{itemize}
\item for any cell containing a 0, all cells above in the same column contain a 0, or all cells 
      to the left in the same row contain a 0,
\item there is at least a 1 in each column.
\end{itemize}
A cell containing a 0 is {\it restricted} if there is a 1 above. A row is {\it restricted} if it 
contains a restricted 0, and {\it unrestricted} otherwise. A cell containing a 1 is {\it essential} 
if it is the topmost 1 of its column, otherwise it is {\it superfluous}. The {\it size} of such a
permutation tableaux is the number of rows of $\lambda$ plus its number of columns.
\end{defn}

To prove our new combinatorial interpretations, we will give bijections linking the previously-known 
combinatorial interpretation \eqref{depart}, and the new ones. The main combinatorial object
we use are the Laguerre histories, defined below.

\begin{defn}[\cite{XGV}] \label{def_hist}
A {\it Laguerre history} of size $n$ is a weighted Motzkin path of $n$ 
steps such that:
\begin{itemize}
\item the weight of a step $\nearrow$ starting at height $h$ is $yq^i$ for some
  $i\in\left\{0,\dots,h\right\}$,
\item the weight of a step $\rightarrow$ starting at height $h$ is either $yq^i$ for
  some $i\in\left\{0,\dots,h\right\}$ or $q^i$ for some $i\in\left\{0,\dots,h-1\right\}$,
\item the weight of a step $\searrow$ starting at height $h$ is $q^i$ for some
  $i\in\left\{0,\dots,h-1\right\}$.
\end{itemize}
The {\it total weight} of the Laguerre history is the product of the weights of its steps. 
We call a {\it type 1 step}, any step having weight $yq^h$ where $h$ is its starting height.
We call a {\it type 2 step}, any step having weight $q^{h-1}$ where $h$ is its starting height.
\end{defn}

\smallskip

As shown by
P. Flajolet \cite{Fla}, the weighted Motzkin paths appear in various combinatorial contexts
in connexion with some continued fractions called J-fractions. We also recall an important 
fact from combinatorial theory of orthogonal polynomials.

\begin{prop}[Flajolet \cite{Fla}, Viennot \cite{XGV}] \label{mu_ortho}
If an orthogonal sequence $\{P_n\}_{n\in\mathbb{N}}$ is defined by the three-term 
recurrence relation
\begin{equation}
   xP_n(x) = P_{n+1}(x) + b_n P_n(x) + \lambda_n P_{n-1}(x),
\end{equation}
then the moment generating function has the J-fraction representation
\begin{equation}
  \sum_{n=0}^\infty \mu_n t^n = \cfrac{1}{1 - b_0t -  
        \cfrac{\lambda_1 t^2}{1 - b_1t -  \cfrac{\lambda_2t^2}{\ddots
}}},
\end{equation}
equivalently the $n$th moment $\mu_n$ is the sum of weights of Motzkin paths of length $n$ 
where the weight of a step $\nearrow$ (respectively $\rightarrow$, $\searrow$) starting at 
height $h$ is $a_h$ (respectively $b_h$, $c_h$) provided $\lambda_n=a_{n-1}c_n$.
\end{prop}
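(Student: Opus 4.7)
The plan is to establish the combinatorial expression for $\mu_n$ first, and then derive the J-fraction from it by a first-return decomposition of Motzkin paths. I would start by introducing the moment linear functional $\mathcal{L}$ on polynomials in $x$, uniquely determined by the orthogonality relations $\mathcal{L}(P_n P_m)=0$ for $n\neq m$ together with the normalisation $\mathcal{L}(P_0)=\mu_0=1$ (where we take $P_0=1$, $P_{-1}=0$ as usual), so that $\mu_n=\mathcal{L}(x^n)$. The only input then needed is the three-term recurrence $xP_h = P_{h+1} + b_h P_h + \lambda_h P_{h-1}$.

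Next, I would view each application of the recurrence as one step of a walk on the non-negative integers: starting from $P_0$, one step sends $P_h$ to $P_{h+1}$, $P_h$, or $P_{h-1}$ with weights $1$, $b_h$, $\lambda_h$ respectively (the third option being absent at $h=0$ since $\lambda_0 P_{-1}=0$). Iterating $n$ times yields
\[
x^n \cdot P_0 \;=\; \sum_{\pi}\, w(\pi)\, P_{e(\pi)},
\]
summed over all $n$-step walks $\pi$ on $\mathbb{N}$ starting at $0$, where $w(\pi)$ is the product of step weights and $e(\pi)$ is the ending height. Applying $\mathcal{L}$ and using $\mathcal{L}(P_k)=0$ for $k\geq 1$ kills every term except those coming from walks ending at $0$, that is, Motzkin paths of length $n$. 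This gives $\mu_n=\sum_\pi w(\pi)$. The gauge freedom in the statement, namely that any factorisation $\lambda_h=a_{h-1}c_h$ may be used, is harmless: on any closed Motzkin path based at $0$, the number of up-steps starting at height $h-1$ equals the number of down-steps starting at height $h$, so $\lambda_h^{k_h}$ and $a_{h-1}^{k_h}c_h^{k_h}$ contribute identically.

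To obtain the J-fraction, I would let $F_h(t)$ be the generating function for weighted Motzkin paths that start and end at height $h$ and remain at height $\geq h$, keeping the same step weights. A first-return-to-$h$ decomposition expresses each such path as either empty, or a flat step at height $h$ (contributing $b_h t$) followed by an $F_h$-path, or an up-step followed by an $F_{h+1}$-path followed by a down-step (jointly contributing $\lambda_{h+1}t^2$) followed by another $F_h$-path. This yields the functional equation
\[
F_h(t) \;=\; \frac{1}{1 - b_h t - \lambda_{h+1} t^2 F_{h+1}(t)},
\]
and unfolding this starting from $F_0(t)=\sum_{n\geq 0}\mu_n t^n$ produces the claimed continued fraction. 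The only mildly subtle point in the whole argument is the gauge freedom check in the second step; everything else is a clean bookkeeping exercise that follows directly from the recurrence and the orthogonality.
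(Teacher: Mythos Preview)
Your proof is correct and follows the standard argument. Note, however, that the paper does not actually prove this proposition: it is stated as a classical result attributed to Flajolet and Viennot, with no proof given in the paper itself. Your argument---expanding $x^n P_0$ via the three-term recurrence as a sum over walks on $\mathbb{N}$, then applying the moment functional to retain only those walks that return to $0$, and finally obtaining the J-fraction by a first-return decomposition---is precisely the approach of the cited references, so there is nothing to compare.
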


\begin{rem} \label{rem_poly}
The sum of weights of Laguerre histories of length $n$ is the $n$th moment of some $q$-Laguerre
polynomials (see \cite{KSZ08}), which are a special case of rescaled Al-Salam-Chihara polynomials.
On the other hand $Z_N$ is the $N$th moment of shifted Al-Salam-Chihara polynomials (see Section~\ref{ALSC}).
We will use the Laguerre histories to derive properties of $Z_N$, however they are
related with two different orthogonal sequences.
\end{rem}

\smallskip

\subsection{The Foata-Zeilberger bijection} Foata and Zeilberger gave a bijection between 
permutations and Laguerre histories in \cite{FZ}. It has been extended by de Médicis and Viennot
\cite{MV}, and Corteel \cite{Co}. In particular, Corteel showed that through this bijection
$\Psi_{FZ}$ we can follow the number weak exceedances and crossings \cite{Co}. The bijection
$\Psi_{FZ}$ links permutations in $\mathfrak{S}_n$ and Laguerre histories of $n$ steps. The $i$th 
step of $\Psi_{FZ}(\sigma)$ is:
\begin{itemize}
\item a step $\nearrow$ if $i$ is a {\it cycle valley}, {\it i.e.} $\sigma^{-1}(i) > i < \sigma(i)$,
\item a step $\searrow$ if $i$ is a {\it cycle peak}, {\it i.e.} $\sigma^{-1}(i) < i > \sigma(i)$,
\item a step $\rightarrow$ in all other cases.
\end{itemize}
And the weight of the $i$th step in $\Psi_{FZ}(\sigma)$ is $y^\delta q^j$ with:
\begin{itemize}
\item $\delta=1$ if $i\leq\sigma(i)$ and 0 otherwise,
\item $j=\#\{\; k \mid k < i \leq \sigma(k) < \sigma(i) \;\} $ 
       if $i \leq \sigma(i)$,
\item $j=\#\{\; k \mid \sigma(i) < \sigma(k) < i < k \; \} $ 
       if $\sigma(i) < i$.
\end{itemize}
It follows that the total weight of $\Psi_{FZ}(\sigma)$ is $y^{\wex(\sigma)} q^{\cro(\sigma)}$.
To see the statistics $\wex$ and $\cro$ in a permutation $\sigma$, it is practical to represent
$\sigma$ by an arrow diagram. We draw $n$ points in a line, and draw an arrow from the $i$th 
point to the $\sigma(i)$th point for any $i$. This arrow is above the axis if $i\leq\sigma(i)$, 
below the axis otherwise. Then $\wex(\sigma)$ is the number of arrows above the axis, and 
$\cro(\sigma)$ is the number of proper intersection between arrows plus the number of chained
arrows going to the right. See Figure~\ref{ex_FZ} for an example with  $\sigma=672581493$, so that
$\wex(\sigma)=5$ and $\cro(\sigma)=7$.

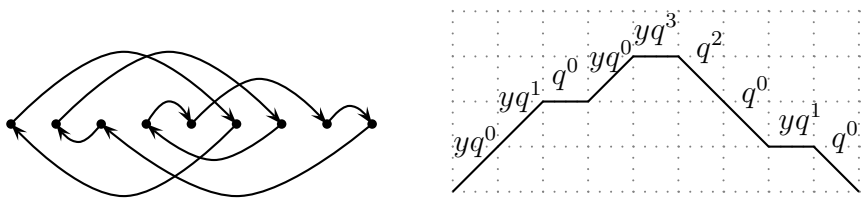
\begin{figure}[h!tp] \psset{unit=6mm} \centering
\begin{pspicture}(1,-2)(7,1)
\psdots(1,0)(2,0)(3,0)(4,0)(5,0)(6,0)(7,0)(8,0)(9,0)
\pscurve[arrowscale=1.5]{->}(1,0)(3.5,1.6)(6,0)     
\pscurve[arrowscale=1.5]{->}(2,0)(4.5,1.6)(7,0)
\pscurve[arrowscale=1.5]{->}(3,0)(2.5,-0.4)(2,0) 
\pscurve[arrowscale=1.5]{->}(4,0)(4.5,0.5)(5,0)
\pscurve[arrowscale=1.5]{->}(5,0)(6.5,1)(8,0)
\pscurve[arrowscale=1.5]{->}(6,0)(3.5,-1.6)(1,0)
\pscurve[arrowscale=1.5]{->}(7,0)(5.5,-0.8)(4,0)
\pscurve[arrowscale=1.5]{->}(8,0)(8.5,0.4)(9,0)
\pscurve[arrowscale=1.5]{->}(9,0)(6,-1.6)(3,0)
\end{pspicture}
\hspace{2cm} \psset{unit=6mm}
\begin{pspicture}(0,-0.5)(9,4)
\psgrid[gridcolor=gray,griddots=4,subgriddiv=0,gridlabels=0](0,0)(9,4)
\psline(0,0)(1,1)(2,2)(3,2)(4,3)(5,3)(6,2)(7,1)(8,1)(9,0)
\rput(0.5,1.1){$yq^0$}
\rput(1.5,2.1){$yq^1$}
\rput(2.5,2.6){$q^0$}
\rput(3.5,3){$yq^0$}
\rput(4.5,3.6){$yq^3$}
\rput(5.7,3.2){$q^2$}
\rput(6.7,2.2){$q^0$}
\rput(7.7,1.6){$yq^1$}
\rput(8.7,1.2){$q^0$}
\end{pspicture}
\caption{\label{ex_FZ}  The permutation $\sigma=672581493$ and its image $\Psi_{FZ}(\sigma)$. }
\end{figure}

\begin{lem} \label{FZ_lrmax}
  Let $\sigma\in\mathfrak{S}_n$, and $1\leq i\leq n$. Then $i$ is a left-to-right 
  maximum of $\sigma$ if and only if the $i$th step of $\Psi_{FZ}(\sigma)$ is a type 1 step
  (as in Definition \ref{def_hist}).
\end{lem}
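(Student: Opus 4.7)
The plan is to reduce the lemma to a single formula for the starting height $h_i$ of the $i$th step of $\Psi_{FZ}(\sigma)$, namely
\begin{equation*}
h_i = \#\{k < i : \sigma(k) \geq i\},
\end{equation*}
and then deduce both directions by a short set-theoretic argument. I would first establish this height formula by induction on $i$. By the definition of $\Psi_{FZ}$, the increment $h_i - h_{i-1}$ equals $+1$, $-1$, or $0$ according to whether $i-1$ is a cycle valley, a cycle peak, or neither. On the other hand, writing $m_i$ for the right-hand side and computing $m_i - m_{i-1}$ directly (tracking both the effect of raising the threshold from $i-1$ to $i$ and the inclusion of the new index $k=i-1$) gives
\begin{equation*}
m_i - m_{i-1} = \mathbf{1}[\sigma(i-1) > i-1] - \mathbf{1}[\sigma^{-1}(i-1) < i-1].
\end{equation*}
A short case analysis on the signs of $\sigma(i-1)-(i-1)$ and $\sigma^{-1}(i-1)-(i-1)$ shows these two increments agree in every case, and since $h_1 = m_1 = 0$ the formula follows.

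Given the height formula, the lemma is essentially a direct comparison. By the weighting rule of $\Psi_{FZ}$, the $i$th step is type 1 exactly when $\sigma(i)\geq i$ (so $\delta=1$) and $\#\{k<i : i\leq\sigma(k)<\sigma(i)\} = h_i$. Assuming $\sigma(i)\geq i$, the set $\{k<i : \sigma(k)\geq i\}$ partitions as
\begin{equation*}
\{k<i : i\leq\sigma(k)<\sigma(i)\} \,\sqcup\, \{k<i : \sigma(k)\geq\sigma(i)\},
\end{equation*}
so by the height formula the counting equality becomes equivalent to the second piece being empty, i.e.\ $\sigma(k)<\sigma(i)$ for all $k<i$. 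Together with $\sigma(i)\geq i$, this says precisely that $i$ is a left-to-right maximum of $\sigma$; conversely, if $i$ is a left-to-right maximum, then $\sigma(i) = \max_{k\leq i}\sigma(k) \geq i$ is automatic (since $\sigma(1),\dots,\sigma(i)$ are $i$ distinct values), so both conditions hold and the step is type 1.

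The main obstacle is the inductive step for the height formula, which requires a careful reconciliation of the indicator-difference expression for $m_i - m_{i-1}$ with the cycle-valley/peak/neither trichotomy used by $\Psi_{FZ}$; the delicate subcase is that of a fixed point $\sigma(i-1)=i-1$, for which both indicators vanish and the increment must indeed be $0$. Once that verification is in place, the remainder of the proof is formal bookkeeping.
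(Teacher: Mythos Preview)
Your proof is correct and takes a genuinely different route from the paper's. The paper introduces $(\sigma,i)$-\emph{sequences}: maximal increasing chains $u_1<\dots<u_j$ in the functional graph of $\sigma$ with $u_1<i<u_j$, observes that their number equals the starting height $h$ (since each begins at a cycle valley and ends at a cycle peak below $i$), and then argues that $i$ is a left-to-right maximum iff every such sequence contributes a crossing counted by $j$. You instead establish the closed formula $h_i=\#\{k<i:\sigma(k)\geq i\}$ by a clean induction on $i$, then read off the result from the partition of this index set. Your approach is more elementary and self-contained---it needs no auxiliary structure beyond indicator bookkeeping---while the paper's approach is more structural and has the advantage that the same $(\sigma,i)$-sequence machinery is reused verbatim in the companion Lemma~\ref{FZ_rlmin} for right-to-left minima. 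Both arguments ultimately express the same fact (each $(\sigma,i)$-sequence contains exactly one index $k<i$ with $\sigma(k)\geq i$), but yours gets there without naming it.
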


\begin{proof}
Let us call a $(\sigma,i)$-sequence a strictly increasing maximal sequence of integers 
$u_1,\dots,u_j$ such that $\sigma(u_k)=u_{k+1}$ for any $1\leq k\leq j-1$, and also such that
$u_1 < i < u_j$. By maximality of the sequence, $u_1$ is a cycle valley and $u_j$ is a cycle 
peak. The number of such sequences is the difference between the number of cycle valleys and 
cycle peaks among $\{1,\dots,i-1\}$, so it is the starting height $h$ of the $i$th step in 
$\Psi_{FZ}(\sigma)$.

\smallskip

Any left-to-right maximum is a weak exceedance, so $i$ is a left-to-right maxima of 
$\sigma$ if and only if  $i\leq\sigma(i)$ and there exists no $j$ such that 
$j < i \leq \sigma(i) < \sigma(j)$. This is also equivalent to the fact that 
$i \leq \sigma(i)$, and there exists no two consecutive elements $u_k$, $u_{k+1}$ of a 
$(\sigma,i)$-sequence such that $u_k < i \leq \sigma(i) < u_{k+1}$. This is also equivalent to 
the fact that $i\leq \sigma(i)$, and any $(\sigma,i)$-sequence contains two consecutive 
elements $u_k$, $u_{k+1}$ such that $u_k < i \leq u_{k+1} < \sigma(i)$.

By definition of the bijection $\Psi_{FZ}$ it is equivalent to the fact that the $i$th step 
of $\Psi_{FZ}(\sigma)$ has weight $yq^h$, {\it i.e.} the $i$th step is a type 1 step.
\end{proof}

\begin{lem} \label{FZ_rlmin}
  Let $\sigma\in\mathfrak{S}_n$, and $1\leq i\leq n$. We suppose $i\neq\sigma(i)$.
  Then $i$ is a right-to-left minima of $\sigma$ 
  if and only if the $i$th step of $\Psi_{FZ}(\sigma)$ is a type 2 step.
\end{lem}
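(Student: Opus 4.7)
The plan is to parallel the proof of Lemma \ref{FZ_lrmax}. First, I reduce to the case $\sigma(i) < i$. If $i \neq \sigma(i)$ is a right-to-left minimum, then $\sigma(i)$ is the minimum of the $n-i+1$ distinct values $\sigma(i),\sigma(i+1),\ldots,\sigma(n)$, hence $\sigma(i) \leq i$, with equality excluded by $i \neq \sigma(i)$. Conversely, a type 2 weight $q^{h-1}$ carries no factor of $y$, which by the definition of $\Psi_{FZ}$ forces $\delta = 0$ and hence $\sigma(i) < i$. Thus both sides of the equivalence already require $\sigma(i) < i$, and I assume this below.

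Under $\sigma(i) < i$ the $i$-th step of $\Psi_{FZ}(\sigma)$ has weight $q^j$ with $j = \#\{k : \sigma(i) < \sigma(k) < i < k\}$, and is type 2 precisely when $j = h - 1$, where $h$ is the starting height. On the other side, $i$ is a right-to-left minimum iff no $k > i$ satisfies $\sigma(k) < \sigma(i)$, i.e.\ iff $D_2 := \#\{k > i : \sigma(k) < \sigma(i)\} = 0$. Hence the lemma reduces to the identity $j = h - 1 - D_2$.

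To prove this identity, I first establish that $h = \#\{k < i : \sigma(k) \geq i\}$, by the same analysis of $(\sigma,i)$-sequences used in Lemma \ref{FZ_lrmax}: each such sequence contains a unique position $u_m$ with $u_m < i \leq \sigma(u_m) = u_{m+1}$, giving a bijection between $(\sigma,i)$-sequences and positions $k < i$ with $\sigma(k) \geq i$. With this in hand, a bookkeeping argument cross-classifying positions and values against the two thresholds $i$ and $\sigma(i)$ yields three elementary identities coming from the fact that every value has a unique preimage under $\sigma$:
\[
  \#\{k < i : \sigma(k) < \sigma(i)\} + \#\{k < i : \sigma(i) < \sigma(k) < i\} = (i-1) - h,
\]
\[
  \#\{k < i : \sigma(k) < \sigma(i)\} + D_2 = \sigma(i) - 1,
\]
\[
  \#\{k < i : \sigma(i) < \sigma(k) < i\} + j = i - \sigma(i) - 1.
\]
Adding the second and third and subtracting the first produces $j + D_2 = h - 1$, giving the claimed equivalence.

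The main obstacle is really just organizing these overlapping counts correctly; the height formula $h = \#\{k < i : \sigma(k) \geq i\}$ is the key structural input, and once it is in place, the rest is linear arithmetic. No new combinatorial ideas beyond those already used for Lemma \ref{FZ_lrmax} are needed.
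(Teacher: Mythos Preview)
Your proof is correct. The paper itself only says the argument is ``similar to the one of the previous lemma'' and gives no details, so your explicit derivation is in fact more complete than what appears in the paper. The route you take is slightly different from a direct adaptation of the Lemma~\ref{FZ_lrmax} argument: rather than analysing, for each $(\sigma,i)$-sequence, whether it contributes to the crossing count, you establish the single identity $j + D_2 = h - 1$ by cross-classifying positions against the two thresholds $\sigma(i)$ and $i$. This bookkeeping approach is clean and avoids case analysis, while the paper's intended argument would presumably mirror Lemma~\ref{FZ_lrmax} by locating, within each $(\sigma,i)$-sequence, the pair of consecutive elements straddling~$i$ and comparing with~$\sigma(i)$.

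One small imprecision worth noting: the bijection you invoke between $(\sigma,i)$-sequences and $\{k < i : \sigma(k) \geq i\}$ does not quite hold as stated when $i$ is a cycle peak. The paper's $(\sigma,i)$-sequences require $u_1 < i < u_j$ strictly, whereas the maximal increasing run ending at $i$ has $u_j = i$; the position $k = \sigma^{-1}(i) < i$ with $\sigma(k) = i$ then has no matching $(\sigma,i)$-sequence. However, the formula $h = \#\{k < i : \sigma(k) \geq i\}$ is nonetheless correct: it counts maximal increasing $\sigma$-runs with start $< i$ and end $\geq i$, which is exactly the difference between cycle valleys and cycle peaks in $\{1,\dots,i-1\}$, regardless of whether $i$ is itself a cycle peak. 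So your three counting identities and the conclusion $j = h - 1 - D_2$ are unaffected.
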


\begin{proof}
We have to pay attention to the fact that a right-to-left minimum can be a fixed point and we
only characterize the non-fixed points here. This excepted, the proof is similar to the one 
of the previous lemma.
\end{proof}


Before we can use the bijection $\Psi_{FZ}$ we need a slight modification of the known
combinatorial interpretation \eqref{depart}, given in the following lemma.

\begin{lem} \label{vprim}
We have:
\begin{equation} \label{depart2}
    \bar Z_N = \sum_{\sigma\in\mathfrak{S}_{N+1}}  
    \alpha^{ u'(\sigma) } \beta^{ v(\sigma) } y^{\wex(\sigma)-1} q^{\cro(\sigma)},
\end{equation}
where $u'(\sigma)$ is the number of right-to-left minima $i$ of $\sigma$ satisfying 
$\sigma^{-1}(N+1) < i$.
\end{lem}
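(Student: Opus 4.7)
My plan is to prove the identity by exhibiting a statistic-preserving bijection. Replacing $\alpha,\beta$ by $1/\alpha,1/\beta$ in \eqref{depart} yields
\[
\bar Z_N \;=\; \sum_{\sigma\in\mathfrak S_{N+1}}\alpha^{u(\sigma)}\beta^{v(\sigma)}y^{\wex(\sigma)-1}q^{\cro(\sigma)},
\]
so the lemma amounts to the statement that for each $k\geq 0$ the subsets $\{\sigma:u(\sigma)=k\}$ and $\{\sigma:u'(\sigma)=k\}$ have the same generating polynomial in $\beta^{v}y^{\wex-1}q^{\cro}$. I therefore aim at a bijection $\Phi:\mathfrak S_{N+1}\to\mathfrak S_{N+1}$ preserving $v$, $\wex$ and $\cro$ while satisfying $u\circ\Phi=u'$.

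Two preliminary remarks simplify the picture. First, since position $1$ is always a left-to-right maximum and every other l2r maximum automatically has value larger than $\sigma(1)$, we have $v(\sigma)=(\#\text{l2r max of }\sigma)-1$, which depends only on the set of l2r maxima. Second, if a fixed point $i$ of $\sigma$ is simultaneously a right-to-left minimum, then $\sigma$ restricts to a permutation of $\{1,\dots,i\}$ and hence sends $N+1$ to a position greater than $i$; therefore every fixed-point right-to-left minimum satisfies $i<\sigma^{-1}(N+1)$. Combined with Lemmas \ref{FZ_lrmax} and \ref{FZ_rlmin}, this exhibits $u'(\sigma)$ as exactly the number of type $2$ steps of $\Psi_{FZ}(\sigma)$ lying strictly after the last type $1$ step, which by Lemma \ref{FZ_lrmax} is located at position $\sigma^{-1}(N+1)$. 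Thus $u'$ has a clean Laguerre-history interpretation, whereas $u$ still depends on the value $\sigma(1)$.

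The core task is then to build $\Phi$. My strategy is to realise it through the Foata--Zeilberger bijection: rearrange the steps of $\Psi_{FZ}(\sigma)$ so as to preserve the total number of $y$'s (hence $\wex$), the total $q$-power (hence $\cro$) and the total number of type $1$ steps (hence $v$), while transforming the value-based marking encoding $u$ into the position-based marking encoding $u'$. Equivalently, one may try to define $\Phi$ directly on permutations, as a variant of the Foata fundamental transformation acting on the cycle of $\sigma$ containing $N+1$, chosen so as to interchange the roles of the landmarks $\sigma(1)$ and $\sigma^{-1}(N+1)$. The main obstacle I anticipate is checking that this construction preserves the set of left-to-right maxima and the set of crossings of $\sigma$ in full generality; this should reduce to a case analysis along the cycle containing $N+1$, distinguishing cycle valleys, peaks and level steps via the rules underlying $\Psi_{FZ}$.
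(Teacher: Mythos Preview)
Your proposal is not yet a proof: the last two paragraphs only describe a strategy (rearranging steps of $\Psi_{FZ}(\sigma)$, or a Foata-style surgery on the cycle containing $N+1$) and you explicitly leave the crucial verification---that the construction preserves the set of left-to-right maxima and the crossing number---as an ``obstacle to anticipate.'' As written, nothing guarantees that such a rearrangement of Laguerre-history steps exists or that it is invertible, so the argument has a genuine gap at its core.

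More importantly, you are working much harder than necessary. The bijection you are looking for is simply $\Phi(\sigma)=\tilde\sigma$, the reverse complement of $\sigma^{-1}$; explicitly, $\sigma(i)=j$ if and only if $\tilde\sigma(N+2-j)=N+2-i$. On the arrow diagram this is just a left-right reflection together with reversal of all arrows, which visibly preserves $\cro$; it is then a direct check on the definitions that $\wex(\tilde\sigma)=\wex(\sigma)$, $v(\tilde\sigma)=v(\sigma)$, and $u(\sigma)=u'(\tilde\sigma)$. No passage through $\Psi_{FZ}$, no step rearrangement, and no cycle surgery is needed. Your preliminary observations about type~1 and type~2 steps are correct and will be exactly what is used \emph{after} this lemma (in the proof of Theorem~\ref{histoires}), but for the lemma itself the elementary involution above suffices.
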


\begin{proof}
This just means that in \eqref{depart} we can replace the statistic $u$ with $u'$, and
this can be done via a simple bijection. For any $\sigma\in\mathfrak{S}_{N+1}$, let 
$\tilde\sigma$ be the reverse complement of $\sigma^{-1}$, {\it i.e.} $\sigma(i)=j$ 
if and only if $\tilde\sigma(N+2-j)=N+2-i$. It is routine to check that 
$u(\sigma)=u'(\tilde\sigma)$, $\wex(\sigma)=\wex(\tilde\sigma)$, and $v(\sigma)=v(\tilde\sigma)$. 
Moreover, one can check that the arrow diagram of $\tilde\sigma$ is obtained from the one of 
$\sigma$ by a vertical symmetry and arrow reversal, so that $\cro(\sigma)=\cro(\tilde\sigma)$. 
So \eqref{depart} and the bijection $\sigma\mapsto\tilde\sigma$ prove \eqref{depart2}.
\end{proof}

From Lemmas~\ref{FZ_lrmax}, \ref{FZ_rlmin}, and \ref{vprim} it possible to give a combinatorial
interpretation of $\bar Z_N$ in terms of the Laguerre histories. We start from the statistics in
$\mathfrak{S}_{N+1}$ described in Definition~\ref{stat_per}, then from \eqref{depart2} and the
properties of $\Psi_{FZ}$ we obtain the following theorem.

\begin{thm} \label{histoires}
The polynomial $y\bar Z_N$ is the generating function of Laguerre histories of $N+1$ steps, 
where: 
\begin{itemize}
\item the parameters $y$ and $q$ are given by the total weight of the path,
\item $\beta$ counts the type 1 steps, except the first one, 
\item $\alpha$ counts the type 2 steps which are to the right of any type 1 step.
\end{itemize}
\end{thm}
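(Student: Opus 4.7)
My plan is to derive Theorem~\ref{histoires} by applying the Foata--Zeilberger bijection $\Psi_{FZ}$ to the combinatorial interpretation given in Lemma~\ref{vprim}, translating each of the four statistics $\wex$, $\cro$, $v$, $u'$ on $\mathfrak{S}_{N+1}$ into a statistic on the Laguerre history $\Psi_{FZ}(\sigma)$. Multiplying Lemma~\ref{vprim} by $y$, we have
\[
 y\bar Z_N = \sum_{\sigma\in\mathfrak{S}_{N+1}} \alpha^{u'(\sigma)}\beta^{v(\sigma)} y^{\wex(\sigma)} q^{\cro(\sigma)},
\]
and recall from Corteel's refinement of $\Psi_{FZ}$ that $y^{\wex(\sigma)}q^{\cro(\sigma)}$ is exactly the total weight of $\Psi_{FZ}(\sigma)$. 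This takes care of the $y$ and $q$ contributions in the theorem, and reduces the proof to identifying $v(\sigma)$ and $u'(\sigma)$ with the two step-count statistics in the statement.

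For the $\beta$-statistic, Lemma~\ref{FZ_lrmax} already identifies left-to-right maxima of $\sigma$ with type~1 steps of $\Psi_{FZ}(\sigma)$. The position $j=1$ is always a left-to-right maximum (trivially, since $\sigma(1)=\max_{1\le i\le 1}\sigma(i)$) but it is never \emph{special} in the sense of Definition~\ref{stat_per}, because the condition $\sigma(1)>\sigma(1)$ fails. Conversely every other left-to-right maximum $j>1$ satisfies $\sigma(j)>\sigma(1)$. Hence $v(\sigma)$ equals the number of type~1 steps minus one, which matches the $\beta$-statistic of the theorem.

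For the $\alpha$-statistic I would first observe that $N+1$ is the global maximum, so $\sigma^{-1}(N+1)$ is automatically the \emph{rightmost} left-to-right maximum of $\sigma$; by Lemma~\ref{FZ_lrmax} this position carries the rightmost type~1 step of $\Psi_{FZ}(\sigma)$. Thus the condition $\sigma^{-1}(N+1)<i$ is the same as ``position $i$ lies to the right of every type~1 step.'' It then remains to apply Lemma~\ref{FZ_rlmin} to turn right-to-left minima into type~2 steps. The only subtlety is that Lemma~\ref{FZ_rlmin} excludes fixed points, so I must show that any right-to-left minimum $i$ with $i>\sigma^{-1}(N+1)$ is automatically not a fixed point. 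The argument I expect to use is a short counting one: if $i$ is a right-to-left minimum then $\sigma(\{i,\ldots,N+1\})\subseteq\{\sigma(i),\ldots,N+1\}$, and if moreover $\sigma(i)=i$ this forces $\sigma(\{i,\ldots,N+1\})=\{i,\ldots,N+1\}$, which puts $N+1$ at some position $\ge i$, contradicting $i>\sigma^{-1}(N+1)$.

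Combining the three paragraphs: each factor in the sum above transports, step by step, to the weight prescribed in Theorem~\ref{histoires}, and the bijectivity of $\Psi_{FZ}$ yields the identity. The only delicate point is the fixed-point exclusion in the last paragraph, but this is a brief combinatorial argument; the rest is bookkeeping on top of Lemmas~\ref{FZ_lrmax}, \ref{FZ_rlmin}, and \ref{vprim}.
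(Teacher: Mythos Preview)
Your proposal is correct and follows essentially the same route as the paper's own proof: apply $\Psi_{FZ}$ to Lemma~\ref{vprim}, use Lemma~\ref{FZ_lrmax} to identify $v(\sigma)$ with the number of type~1 steps minus one (since $j=1$ is the unique non-special left-to-right maximum), identify $\sigma^{-1}(N+1)$ as the rightmost left-to-right maximum, and then use Lemma~\ref{FZ_rlmin} after ruling out fixed points via the observation that a fixed right-to-left minimum $i$ would force $\sigma$ to stabilize $\{i,\dots,N+1\}$. Your handling of the fixed-point exclusion is exactly the argument the paper gives (phrased with $\{i+1,\dots,N+1\}$ instead of $\{i,\dots,N+1\}$, but equivalent).
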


\begin{proof}
Let $\sigma\in\mathfrak{S}_{N+1}$.
The smallest left-to-right maximum of $\sigma$ is 1, and any other left-to-right maximum $i$ is 
such that $\sigma(1)<\sigma(i)$. So $1$ is the only left-to-right maximum which is not special.
So by Lemma~\ref{FZ_lrmax}, $v(\sigma)$ is the number of type 1 steps in $\Psi_{FZ}(\sigma)$,
minus 1.

Moreover, $\sigma^{-1}(N+1)$ is the largest left-to-right maximum of $\sigma$. Let $i$ be
a right-to-left minimum of $\sigma$ such that $\sigma^{-1}(N+1)<i$. We have $i\neq\sigma(i)$,
otherwise $\sigma$ would stabilize the interval $\{i+1,\dots,N+1\}$ and this would contradict
$\sigma^{-1}(N+1)<i$. So we can apply Lemma~\ref{FZ_rlmin}, and it comes that $u'(\sigma)$
is the number of type 2 steps in $\Psi_{FZ}(\sigma)$, which are to the right of any type 1 step.
So \eqref{depart2} and the bijection $\Psi_{FZ}$ prove the theorem.
\end{proof}

Before ending this subsection, we sketch how to recover a known result in the case $q=0$
from Theorem~\ref{histoires}. This was given in Section~3.2 of \cite{BGR} (see also Section
3.6 in \cite{BE}) and proved via generating functions. For any Dyck path $D$, let $\ret(D)$ 
be the number of returns to height 0, for example $\ret(\nearrow\searrow)=1$ and
$\ret(\nearrow\searrow\nearrow\searrow)=2$, and the empty path $\cdot$ satisfies $\ret(\cdot)=0$. 
The result is the following.

\begin{prop}[Brak, de Gier, Rittenberg] When $y=1$ and $q=0$, the partition function is 
$Z_N=\sum (\frac1\beta)^{\ret(D_1)}(\frac1\alpha)^{\ret(D_2)}$ where the sum is over
pairs of Dyck paths $(D_1,D_2)$ whose lengths sum to $2N$.
\end{prop}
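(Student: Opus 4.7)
My plan is to specialize Theorem~\ref{histoires} at $y=1$ and $q=0$ and then compute the resulting generating function over Laguerre histories by decomposing them into primitive components. At $q=0$ only weight options involving $q^{0}$ survive, so a nonvanishing Laguerre history is nothing more than a Motzkin path from $0$ to $0$ of length $N+1$ in which each $\rightarrow$-step at height $\geq 1$ carries one of two decorations, either the first rule (weight $yq^{0}$) or the second rule (weight $q^{0}$); all other steps occur in a unique way. Under this specialization, the type $1$ steps are exactly the steps beginning at height $0$, and the type $2$ steps are the $\searrow$-steps at height $1$ together with the second-rule $\rightarrow$-steps at height $1$.

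Next I would decompose the Motzkin path from $0$ to $0$ into its primitive components: each primitive is either (A) a single level step at height $0$, or (B) an arch $\nearrow P\searrow$ whose interior $P$ stays at height $\geq 1$. If the path has $k$ primitives, there is exactly one type $1$ step per primitive (its first step), so the $\beta$-exponent equals $k-1$, and the statistic counted by $\alpha$ is concentrated in the last primitive.

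Let $c=c(x)$ denote the Catalan generating function, satisfying $xc^{2}=c-1$. The interior of any non-final arch is a fully $2$-coloured Motzkin path (at every height, each level step has two equally-weighted decorations), whose generating function $F_{0}$ satisfies $F_{0}=1+2xF_{0}+x^{2}F_{0}^{2}$ and hence equals $c^{2}$. This yields $P_{\mathrm{prim}}(x)=x+x^{2}c^{2}=xc$ for a non-last primitive. For the last primitive one must weight by $\alpha$ for the final $\searrow$ and for each second-rule level step inside $P$ at height $1$ of the original path (equivalently, at height $0$ after shifting $P$ down by one). A one-line recursion $F'=1+(1+\alpha)xF'+x^{2}c^{2}F'$ gives $F'(x,\alpha)=1/(1-x(c+\alpha))$, and hence, using the identity $(1-xc)(1-x\alpha c)=1-x(c+\alpha)$ (itself a direct consequence of $xc^{2}=c-1$),
\[
P_{\mathrm{last}}(x,\alpha)=x+\alpha x^{2}F'(x,\alpha)=\frac{x(1-xc)}{1-x(c+\alpha)}=\frac{x}{1-x\alpha c}.
\]

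Assembling a Laguerre history as a sequence of $k$ primitives with a $\beta$-weight per non-first primitive then gives
\[
\sum_{N\geq 0}\bar Z_{N}\,x^{N+1}=\frac{P_{\mathrm{last}}(x,\alpha)}{1-\beta\, P_{\mathrm{prim}}(x)}=\frac{x}{(1-x\beta c)(1-x\alpha c)},
\]
which is $x$ times the product $C(x,\beta)\,C(x,\alpha)$ of two copies of the Dyck-by-returns generating function $C(x,t)=1/(1-xtc(x))=\sum_{D}x^{|D|/2}t^{\mathrm{ret}(D)}$. Extracting coefficients of $x^{N}$ and passing from $\bar Z_{N}$ to $Z_{N}$ via $\alpha\mapsto 1/\alpha$, $\beta\mapsto 1/\beta$ yields the claim. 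The main obstacle is keeping careful track of exactly which second-rule level steps inside an arch contribute $\alpha$, namely only those at height $1$ of the original path and not at higher heights; once this bookkeeping is correct, the two applications of $xc^{2}=c-1$ finish the proof.
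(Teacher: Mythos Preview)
Your argument is correct. You and the paper start from the same place---Theorem~\ref{histoires} specialized at $y=1$, $q=0$, yielding bicolor Motzkin paths of length $N+1$ with the $\beta$-marks on height-$0$ steps and the $\alpha$-marks on certain height-$1$ steps in the final primitive---but you then diverge. The paper proceeds bijectively: it applies the de M\'edicis--Viennot step-doubling bijection to turn the bicolor Motzkin path into a Dyck path of length $2N+2$, then factors that Dyck path as $D_1\nearrow D_2\searrow$ and reads off the two return statistics directly. You instead decompose the Motzkin path into primitives, compute the generating function of a generic primitive as $xc$ and of the $\alpha$-weighted last primitive as $x/(1-x\alpha c)$ (via the identity $(1-xc)(1-x\alpha c)=1-x(c+\alpha)$, equivalent to $xc^2=c-1$), and match the product $\dfrac{x}{(1-x\beta c)(1-x\alpha c)}$ against the product of two Dyck-by-returns series. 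Your approach is self-contained and avoids invoking an auxiliary bijection; the paper's approach yields an explicit length-preserving correspondence between the histories and the pairs $(D_1,D_2)$, which is combinatorially sharper but requires that extra ingredient. One small remark: in your assembly step the $\beta$'s are attached to the $k-1$ non-first primitives while your geometric series tacitly attaches them to the $k-1$ non-last primitives, but since neither $P_{\mathrm{prim}}$ nor $P_{\mathrm{last}}$ carries any $\beta$-dependence the two placements produce the same factor $\beta^{k-1}$, so the bookkeeping is fine.
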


\begin{proof}
When $q=0$ we can remove any step with weight 0 in the Laguerre histories. When $y=1$, 
to distinguish the two kinds of horizontal steps we introduce another kind of paths. Let us call 
a {\it bicolor} Motzkin path, a Motzkin path with two kinds of horizontal steps $\earrow$ and 
$\rightarrow$, and such that there is no $\earrow$ at height 0. From Theorem~\ref{histoires},
if $y=1$ and $q=0$ then $\beta\bar Z_N$ is the generating function of bicolor Motzkin paths 
$M$ of length $N+1$, where:
\begin{itemize}
\item there is a weight $\beta$ on each step $\nearrow$ or $\rightarrow$ starting at height 0,
\item there is a weight $\alpha$ on each step $\searrow$ or $\earrow$ starting at height 1 and being
 to the right of any step with a weight $\beta$.
\end{itemize}
There is a bijection between these bicolor Motzkin paths, and Dyck paths of length $2N+2$ (see de 
Médicis and Viennot \cite{MV}). To obtain the Dyck path $D$, each step $\nearrow$ in the bicolor Motzkin 
path $M$ is replaced with a sequence of two steps $\nearrow\nearrow$. Similarly, each step $\rightarrow$ 
is replaced with $\nearrow\searrow$, each step $\earrow$ is replaced with $\searrow\nearrow$, each step
$\searrow$ is replaced with $\searrow\searrow$. When some step $s\in\{\nearrow,\rightarrow,\searrow\}$
in $M$ has a weight $\beta$ or $\alpha$, and is transformed into steps 
$(s_1,s_2)\in\{\nearrow,\rightarrow,\searrow\}^2$ in $D$, we choose to put the weight $\beta$ or $\alpha$
on $s_1$. It appears that $D$ is a Dyck path of length $2N+2$ such that:
\begin{itemize}
\item there is a weight $\beta$ on each step $\nearrow$ starting at height 0,
\item there is a weight $\alpha$ on each step $\searrow$ starting at height 2 and being to the right 
      of any step with weight $\beta$.
\end{itemize}
Then $D$ can be factorized into $D_1\nearrow D_2\searrow$ where $D_1$ and $D_2$ are Dyck paths whose 
lengths sum to $2N$, and up to a factor $\beta$ it can be seen that $\beta$ (respectively $\alpha$) 
counts the returns to height 0 in $D_1$ (respectively $D_2$). More precisely the $\beta$s are on the 
steps $\nearrow$ starting at height 0 but there are as many of them as the number of returns to height 0.
See Figure~\ref{dycks} for a an example.
\end{proof}

\begin{figure}[h!tp] \psset{unit=3mm} \centering
\begin{pspicture}(-1.6,0)(10,4)
\psgrid[gridcolor=gray,griddots=4,subgriddiv=0,gridlabels=0](0,0)(10,4)
\psline(0,0)(1,1)
\psline[linestyle=dotted,dotsep=0.5mm,linewidth=0.5mm](1,1)(2,1)
\psline(2,1)(3,0)(4,0)(5,1)(6,1)(7,2)(8,1)
\psline[linestyle=dotted,dotsep=0.5mm,linewidth=0.5mm](8,1)(9,1)
\psline(9,1)(10,0)
\rput(-1.7,1.6){$M=$}
\rput(0.5,1.5){$\beta$}\rput(3.5,0.5){$\beta$}\rput(4.5,1.5){$\beta$}
\rput(8.5,1.5){$\alpha$}\rput(9.5,1.5){$\alpha$}
\end{pspicture}
\hspace{1.4cm} \psset{unit=3mm}
\begin{pspicture}(20,5)
\psgrid[gridcolor=gray,griddots=4,subgriddiv=0,gridlabels=0](0,0)(20,5)
\psline(0,0)(1,1)(2,2)(3,1)(4,2)(5,1)(6,0)(7,1)(8,0)(9,1)(10,2)(11,3)(12,2)(13,3)(14,4)(15,3)(16,2)(17,1)(18,2)(19,1)(20,0)
\rput(-1.7,1.6){$D=$}
\rput(0.5,1.5){$\beta$}\rput(6.5,1.5){$\beta$}\rput(8.5,1.5){$\beta$}
\rput(16.5,2.5){$\alpha$}\rput(18.5,2.5){$\alpha$}
\end{pspicture}
\\[3mm]
\begin{pspicture}(8,5)
\psgrid[gridcolor=gray,griddots=4,subgriddiv=0,gridlabels=0](0,0)(8,4)
\psline(0,0)(1,1)(2,2)(3,1)(4,2)(5,1)(6,0)(7,1)(8,0)
\rput(-1.8,1.6){$D_1=$}
\rput(0.5,1.5){$\beta$}\rput(6.5,1.5){$\beta$}
\end{pspicture}
\hspace{2cm}
\begin{pspicture}(10,5)
\psgrid[gridcolor=gray,griddots=4,subgriddiv=0,gridlabels=0](0,0)(10,4)
\psline(0,0)(1,1)(2,2)(3,1)(4,2)(5,3)(6,2)(7,1)(8,0)(9,1)(10,0)
\rput(-1.8,1.6){$D_2=$}
\rput(7.5,1.5){$\alpha$}\rput(9.5,1.5){$\alpha$}
\end{pspicture}

\caption{\label{dycks}
The bijection between $M$, $D$ and $(D_1,D_2)$.}
\end{figure}
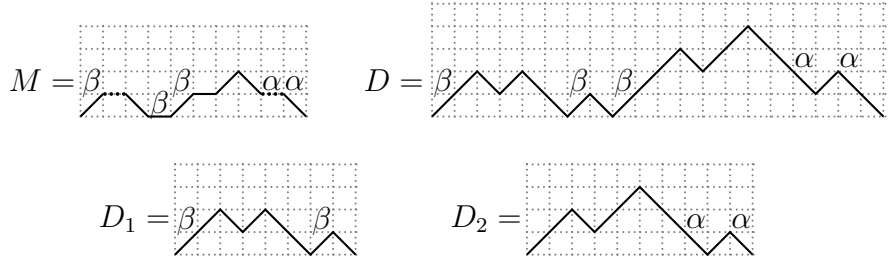

\subsection{The Françon-Viennot bijection} This bijection was given in \cite{FV}. We use here
the definition of this bijection given in \cite{Co}. The map $\Psi_{FV}$ is a bijection between
permutations of size $n$ and Laguerre histories of $n$ steps. Let $\sigma\in\mathfrak{S}_n$, 
$j\in\{1,\dots,n\}$ and $k=\sigma(j)$. Then the $k$th step of $\Psi_{FV}(\sigma)$ is:
\begin{itemize}
\item a step $\nearrow$ if $k$ is a {\it valley}, {\it i.e.} $\sigma(j-1)>\sigma(j)<\sigma(j+1)$,
\item a step $\searrow$ if $k$ is a {\it peak}, {\it i.e.} $\sigma(j-1)<\sigma(j)>\sigma(j+1)$,
\item a step $\rightarrow$ if $k$ is a {\it double ascent}, {\it i.e.}
$\sigma(j-1)<\sigma(j)<\sigma(j+1)$, or a {\it double descent}, {\it i.e.}
$\sigma(j-1)>\sigma(j)>\sigma(j+1)$.
\end{itemize}
This is done with the convention that $\sigma(n+1)=n+1$, in particular $n$ is always an ascent 
of $\sigma\in\mathfrak{S}_n$. Moreover the weight of the $k$th step is $y^\delta q^i$ where 
$\delta=1$ if $j$ is an ascent and $0$ otherwise, and $i=$ 31-2$(\sigma,j)$. This number
31-2$(\sigma,j)$ is the number of patterns 31-2 such that $j$ correspond to the 2,
{\it i.e.} integers $i$ such that $1<i+1<j$ and $\sigma(i+1)<\sigma(j)<\sigma(i)$.
A consequence of the definition is that the total weight of $\Psi_{FV}(\sigma)$ 
is $y^{\asc(\sigma)}q^{\hbox{\scriptsize \mot}(\sigma)}$. See Figures~\ref{fig} and \ref{ex_FV}
for examples.

\begin{figure}[h!tp]\psset{unit=4mm} \centering
\begin{pspicture}(0,0)(7,7)
\rput(0.3,1){1}
\rput(0.3,2){2}
\rput(0.3,3){3}
\rput(0.3,4){4}
\rput(0.3,5){5}
\rput(0.3,6){6}
\rput(0.3,7){7}
\rput(1,0.2){1}
\rput(2,0.2){2}
\rput(3,0.2){3}
\rput(4,0.2){4}
\rput(5,0.2){5}
\rput(6,0.2){6}
\rput(7,0.2){7}
\psline[linecolor=gray](1,1)(1,7)\psline[linecolor=gray](1,1)(7,1)
\psline[linecolor=gray](2,1)(2,7)\psline[linecolor=gray](1,2)(7,2)
\psline[linecolor=gray](3,1)(3,7)\psline[linecolor=gray](1,3)(7,3)
\psline[linecolor=gray](4,1)(4,7)\psline[linecolor=gray](1,4)(7,4)
\psline[linecolor=gray](5,1)(5,7)\psline[linecolor=gray](1,5)(7,5)
\psline[linecolor=gray](6,1)(6,7)\psline[linecolor=gray](1,6)(7,6)
\psline[linecolor=gray](7,1)(7,7)\psline[linecolor=gray](1,7)(7,7)
\psdots(1,4)(2,3)(3,7)(4,1)(5,2)(6,6)(7,5)
\end{pspicture}  \hspace{2cm} \psset{unit=7mm}
\begin{pspicture}(0,-0.5)(7,3)
\psgrid[gridcolor=gray,griddots=4,subgriddiv=0,gridlabels=0](0,0)(7,3)
\psline(0,0)(1,1)(2,1)(3,2)(4,1)(5,2)(6,1)(7,0)
\rput(0.2,1.0){$yq^0$}
\rput(1.3,1.5){$yq^1$}
\rput(2.4,2.1){$yq^0$}
\rput(3.5,2.2){$q^0$}
\rput(4.5,2.2){$yq^1$}
\rput(5.8,1.9){$q^1$}
\rput(6.7,1){$q^0$}
\end{pspicture}
\caption{Example of the permutation $4371265$ and its image by the Françon-Viennot 
bijection. \label{fig}}
\end{figure}
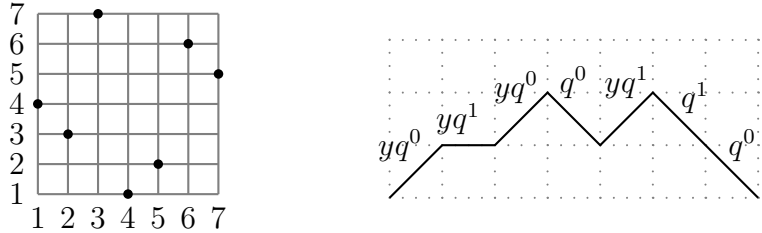

\begin{lem} \label{FV_rlmin}
   Let $\sigma\in\mathfrak{S}_n$ and $1\leq i\leq n$.
   Then $\sigma^{-1}(i)$ is a right-to-left minimum of $\sigma$ 
   if and only if the $i$th step of $\Psi_{FV}(\sigma)$ is a type 1 step.
\end{lem}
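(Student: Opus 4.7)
The plan is to compute both sides of the equivalence explicitly in terms of $\sigma$ and compare.

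Since $k=\sigma(j)$ in the description of $\Psi_{FV}$, the $i$th step of $\Psi_{FV}(\sigma)$ is the one produced by the position $j=\sigma^{-1}(i)$. Its weight is $y^\delta q^{\mot(\sigma,j)}$ where $\delta=1$ iff $j$ is an ascent, so the step is of type 1 exactly when $\delta=1$ and $\mot(\sigma,j)$ equals the starting height $h$ of the step. The central task is thus to obtain a usable formula for $h$. I would show
\[
   h \;=\; \#\{\,j' : 1\leq j'\leq n,\ \sigma(j')\geq i>\sigma(j'+1)\,\},
\]
with the convention $\sigma(n+1)=n+1$; this counts the downward crossings of the level $i$ in the sequence $\sigma(1),\dots,\sigma(n+1)$. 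The proof is by induction on $i$: passing from value $i-1$ to value $i$, the height changes by $+1$, $0$ or $-1$ according as $i-1$ is a valley, a double ascent/descent, or a peak, and a short case analysis at the two neighbours of $\sigma^{-1}(i-1)$ shows that the downward-crossing count obeys the same rule.

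Once that formula is available, I would split the count according to where $j'$ sits relative to $j$. The index $j'=j-1$ never contributes, since it would force $i>\sigma(j'+1)=i$; the index $j'=j$ contributes $1$ iff $j$ is a descent; the indices $j'\leq j-2$ with $\sigma(j')>i>\sigma(j'+1)$ are precisely the $\mot$-patterns at $j$; and I would denote by $C$ the remaining contribution from $j'>j$. This gives
\[
   h \;=\; \mot(\sigma,j) + C + [j \text{ is a descent}].
\]

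The step is type 1 iff $\delta=1$ and $\mot(\sigma,j)=h$, which together force $j$ to be an ascent and $C=0$. Since $j$ is then an ascent we have $\sigma(j+1)>i$, and since $\sigma(n+1)=n+1>i$ the condition $C=0$ prevents the values $\sigma(j+1),\dots,\sigma(n)$ from ever going below $i$: any such dip would have to rise back above $i$ and so would produce a downward crossing counted by $C$. Hence $\sigma(j'')>i$ for every $j''>j$, which is exactly the right-to-left minimum condition on $j=\sigma^{-1}(i)$; the converse is immediate by reversing the same steps. The main obstacle is the downward-crossing formula for $h$; everything after that is routine parsing of the definitions.
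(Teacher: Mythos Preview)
Your proof is correct and follows essentially the same route as the paper's: both compute the starting height $h$ of the $i$th step as the number of ``level-$i$ crossings'' in the sequence $\sigma(1),\dots,\sigma(n+1)$, then split this count according to whether the crossing lies to the left or right of $j=\sigma^{-1}(i)$. The paper phrases the crossings as maximal descending runs $u,\dots,v$ with $\sigma(u)\geq i>\sigma(v)$ (its $(\sigma,i)$-sequences), whereas you count the single descent step $j'$ inside each such run with $\sigma(j')\geq i>\sigma(j'+1)$; these are in obvious bijection, so the two computations are the same. One small wording slip: when you argue that $C=0$ forces $\sigma(j'')>i$ for all $j''>j$, the relevant fact is that a dip below $i$ is preceded by a downward crossing (since $\sigma(j+1)>i$), not that it must later ``rise back''; the rise back would be an upward crossing, not one counted by $C$. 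The conclusion is unaffected.
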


\begin{proof}
This could be done by combining the arguments of \cite{FV} and \cite{Co}. We sketch a proof introducing
ideas that will be helpful for the next lemma.

We suppose that $j=\sigma^{-1}(i)$ is a right-to-left minimum. So $j$ is an ascent, and any $v$ such that 
$i>\sigma(v)$ is such that $v<j$. The integer 31-2$(\sigma,j)$ is the number of maximal sequence of 
consecutive integers $u,u+1,\dots,v$ such that $\sigma(u)>\sigma(u+1)>\dots>\sigma(v)$, and 
$\sigma(u) > i > \sigma(v)$. Indeed, any of these sequences $u,\dots,v$ is such that $v<j$ and so
it is possible to find two consecutive elements $k,k+1$ in the sequence such that 
$\sigma(k+1)<\sigma(j)<\sigma(k)$, and these $k,k+1$ only belong to one sequence.

We call a $(\sigma,i)$-sequence a maximal sequence of consecutive integers $u,u+1,\dots,v$ such that
$\sigma(u)>\sigma(u+1)>\dots>\sigma(v)$, and $\sigma(u)\geq i > \sigma(v)$. By maximality, $u$ is a 
peak and $v$ is a valley. The number of such sequences is the difference between the number of peaks
and number of valleys among the elements of image smaller than $i$, so it is the starting height $h$ 
of the $i$th step in $\Psi_{FV}(\sigma)$. 

So with this definition, we can check that $j=\sigma^{-1}(i)$ is a right-to-left minimum of $\sigma$
if and only if $j$ is an ascent and any $(\sigma,i)$-sequence $u,u+1,\dots,v$ is such that $v<j$. 
So this is equivalent to the fact that the $i$th step of $\Psi_{FV}(\sigma)$ is a type 1 step.

\end{proof}

\begin{lem} \label{FV_rlmax}
   Let $\sigma\in\mathfrak{S}_n$, and $1\leq i\leq n$. We suppose $\sigma^{-1}(i)<n$.
   Then $\sigma^{-1}(i)$ is a right-to-left maximum of $\sigma$ if and only if
   \begin{itemize}
      \item the $i$th step of $\Psi_{FV}(\sigma)$ it is a type 2 step,
      \item any type 1 step is to the left of the $i$th step.
   \end{itemize}
\end{lem}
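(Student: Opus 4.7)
The plan is to refine the proof of Lemma~\ref{FV_rlmin}, keeping the bookkeeping a bit more careful near the right end of $\sigma$. Set $j = \sigma^{-1}(i)$ and let $h$ denote the starting height of the $i$-th step of $\Psi_{FV}(\sigma)$. As in that preceding proof, $h$ counts the \emph{descents across $i$} in $\sigma$, i.e.\ positions $\ell \in \{1, \dots, n-1\}$ with $\sigma(\ell) \geq i > \sigma(\ell+1)$: each $(\sigma,i)$-sequence contains exactly one such transition. Since $\sigma(\ell) = i$ forces $\ell = j$, for any $\ell < j$ one automatically has $\sigma(\ell) > i$, so $\mot(\sigma, j)$ counts precisely the descents across $i$ at positions $\ell < j$. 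Splitting the $h$ descents across $i$ into those strictly before, at, and strictly after $j$ yields the central identity
\[
  h \;=\; \mot(\sigma, j) \;+\; \mathbf{1}\bigl[\sigma(j+1) < i\bigr] \;+\; N_{>j},
\]
where $N_{>j} := \#\{\ell > j : \sigma(\ell) \geq i > \sigma(\ell+1)\}$.

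For the forward implication, suppose $j < n$ is a right-to-left maximum of $\sigma$. Then $\sigma(\ell) < i$ for every $\ell > j$, which forces both $N_{>j} = 0$ and $\sigma(j+1) < i$; hence $h = \mot(\sigma, j) + 1$. Since $j$ is a descent of $\sigma$ we have $\delta = 0$, so the $i$-th step carries weight $q^{h-1}$ and is a type 2 step. For any $i' > i$, the position $\sigma^{-1}(i')$ must lie strictly to the left of $j$ and has $\sigma(j) = i < i'$ to its right, so it is not a right-to-left minimum; Lemma~\ref{FV_rlmin} then rules out a type 1 step at position $i'$, and the $i$-th step being type 2 covers $i' = i$, so every type 1 step sits strictly to the left of the $i$-th.

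Conversely, suppose the $i$-th step is type 2 and every type 1 step lies to its left. Type 2 gives $\delta = 0$ (whence $\sigma(j+1) < i$, using $j < n$) together with $q$-exponent equal to $h - 1$, which via the identity forces $N_{>j} = 0$. If $j$ were not a right-to-left maximum, the set $S = \{k > j : \sigma(k) > i\}$ would be non-empty, and the vanishing $N_{>j} = 0$ forces $S$ to be upward-closed in $\{j+1, \dots, n\}$, hence $S = \{m, m+1, \dots, n\}$ for some $m > j$. Picking $p \in S$ with $\sigma(p) = \min\{\sigma(k) : k \in S\}$, every $k > p$ lies in $S$ with $\sigma(k) > \sigma(p)$, so $p$ is a right-to-left minimum of $\sigma$; by Lemma~\ref{FV_rlmin} the $\sigma(p)$-th step is then type 1, and $\sigma(p) > i$ places it strictly to the right of the $i$-th step, contradicting the hypothesis. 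The main obstacle is exactly this endpoint issue: the convention $\sigma(n+1) = n+1$ means that a ``tail'' of values exceeding $i$ at the right end of $\sigma$ produces no descent across $i$ and so is invisible to $\mot(\sigma, j)$ alone; the second hypothesis of the lemma is precisely what detects such a tail, via the stray right-to-left minimum $p$.
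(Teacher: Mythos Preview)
Your proof is correct and follows essentially the same route as the paper's: both directions hinge on counting the ``crossings of level $i$'' (your descents across $i$, the paper's $(\sigma,i)$-sequences) and, for the converse, producing a right-to-left minimum with value larger than $i$ to contradict the type~1 hypothesis via Lemma~\ref{FV_rlmin}. The only real difference is packaging---your explicit identity $h = \mot(\sigma,j) + \mathbf{1}[\sigma(j+1)<i] + N_{>j}$ makes the height bookkeeping more transparent than the paper's terser sequence argument, and your ``$S$ is upward-closed, take the position of minimal value'' extracts the same contradictory right-to-left minimum that the paper reaches by taking the minimal $k>i$ with $\sigma^{-1}(k)>\sigma^{-1}(i)$.
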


\begin{proof}
We keep the definition of $(\sigma,i)$-sequence as in the previous lemma. First we suppose that
$\sigma^{-1}(i)$ is a right-to-left maximum strictly smaller than $n$, and we check that the two
points are satisfied. If $\sigma^{-1}(j)$ is a right-to-left minimum, then $i>j$, so the second 
point is satisfied. A right-to-left maximum is a descent, so the $i$th step is $\rightarrow$ or 
$\searrow$ with weight $q^g$. We have to show $g=h-1$. Since $\sigma^{-1}(i)$ is a right-to-left
maximum, there is no $(\sigma,i)$-sequence $u<\dots<v$ with $\sigma^{-1}(i)<u$. So there is one
$(\sigma,i)$-sequence $u<\dots<v$ such that $u\leq \sigma^{-1}(i)<v$, and the $h-1$ other ones
contains only integers strictly smaller than $\sigma^{-1}(i)$. So the $i$th step of $\Psi_{FV}(\sigma)$
has weight $q^{h-1}$.

\smallskip

Reciprocally, we suppose that the two points above are satisfied. There are $h-1$ $(\sigma,i)$-sequence
containing integers strictly smaller than $\sigma^{-1}(i)$. Since $\sigma^{-1}(i)$ is a descent, 
the $h$th $(\sigma,i)$-sequence $u<\dots<v$ is such that $u\leq \sigma^{-1}(i)<v$. So there is
no $(\sigma,i)$-sequence $u<\dots<v$ such that $\sigma^{-1}(i)<u$.

If we suppose that $i$ is not a right-to-left maximum, there would exist $k>i$ such that 
$\sigma^{-1}(k)>\sigma^{-1}(i)$. We take the minimal $k$ satisfying this property. Then the images
of $\sigma^{-1}(k)+1,\dots,n$ are strictly greater than $k$, otherwise there would exist
$\ell>\sigma^{-1}(k)$ such that $\sigma(\ell)>i>\sigma(\ell+1)$. But then $\sigma^{-1}(k)$ 
would be a right-to-left minimum and this would contradict the second point that we assumed to
be satisfied.
\end{proof}

In Theorem~\ref{histoires} we have seen that $\bar Z_N$ is a generating function of Laguerre
histories, and the bijection $\Psi_{FV}$ together with the two lemmas above give our second new
combinatorial interpretation of $\bar Z_N$.

\begin{thm} \label{main} 
We have:
\begin{equation}
    \bar Z_N = \sum_{\sigma\in\mathfrak{S}_{N+1}}  
    \alpha^{s(\sigma)-1} \beta^{t(\sigma)-1} y^{\asc(\sigma)-1}q^{\hbox{\scriptsize \mot}(\sigma)},
\end{equation}
where we use the statistics in Definition \ref{stat_per2} above.
\end{thm}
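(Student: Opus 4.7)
The plan is to derive Theorem \ref{main} from Theorem \ref{histoires} by applying the Françon--Viennot bijection $\Psi_{FV}$ to the Laguerre histories, and then translating each of the three combinatorial statistics $\{$type $1$ steps$\}$, $\{$type $2$ steps right of every type $1$$\}$, total $(y,q)$-weight via Lemmas \ref{FV_rlmin} and \ref{FV_rlmax} together with the known fact that the total weight of $\Psi_{FV}(\sigma)$ is $y^{\asc(\sigma)} q^{\hbox{\scriptsize \mot}(\sigma)}$.

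First I would invoke Theorem \ref{histoires} to write $y\bar Z_N$ as a sum over Laguerre histories $H$ of length $N+1$, with contribution (total $y,q$-weight of $H$) times $\beta^{e_1-1}\alpha^{e_2}$, where $e_1$ is the number of type $1$ steps of $H$ and $e_2$ the number of type $2$ steps lying to the right of every type $1$ step. Then I would set $H=\Psi_{FV}(\sigma)$ for $\sigma\in\mathfrak S_{N+1}$, so that the total $y$- and $q$-weight contributes exactly $y^{\asc(\sigma)} q^{\hbox{\scriptsize \mot}(\sigma)}$ by the definition of $\Psi_{FV}$. By Lemma \ref{FV_rlmin}, $e_1$ is precisely the number of right-to-left minima of $\sigma$, i.e.\ $e_1 = t(\sigma)$, which handles the $\beta$ exponent as $t(\sigma)-1$.

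The delicate point, and the step I expect to do the most work on, is matching the $\alpha$ exponent $e_2$ with $s(\sigma)-1$. Lemma \ref{FV_rlmax} supplies the correspondence for indices $i$ with $\sigma^{-1}(i)<N+1$, showing that such $\sigma^{-1}(i)$ is a right-to-left maximum iff the $i$th step is type $2$ and lies to the right of every type $1$ step. To close the gap I would observe that position $N+1$ is automatically a right-to-left maximum of $\sigma$, so the right-to-left maxima at positions strictly less than $N+1$ number exactly $s(\sigma)-1$; and for the excluded index $i=\sigma(N+1)$, the Françon--Viennot convention $\sigma(N+2)=N+2$ forces $N+1$ to be an ascent, so the corresponding step is either $\nearrow$ or $\rightarrow$ with a factor of $y$ in its weight, and hence can never be a type $2$ step. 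Thus no contribution is lost, and $e_2 = s(\sigma)-1$.

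Combining these matches gives
\[
 y\bar Z_N \;=\; \sum_{\sigma\in\mathfrak S_{N+1}} \alpha^{s(\sigma)-1}\beta^{t(\sigma)-1}\, y^{\asc(\sigma)}\, q^{\hbox{\scriptsize \mot}(\sigma)},
\]
and dividing both sides by $y$ yields the formula of Theorem \ref{main}. The bijectivity of $\Psi_{FV}$ guarantees that this is an equality of polynomials term-by-term, so no additional enumeration is needed.
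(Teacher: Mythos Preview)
Your proposal is correct and follows essentially the same approach as the paper: the paper's proof of Theorem~\ref{main} simply states that combining Theorem~\ref{histoires} with the bijection $\Psi_{FV}$ and Lemmas~\ref{FV_rlmin} and \ref{FV_rlmax} yields the result. You have supplied the details the paper leaves implicit, in particular the verification that the excluded case $\sigma^{-1}(i)=N+1$ in Lemma~\ref{FV_rlmax} causes no trouble (the corresponding step carries a factor $y$ and hence cannot be type~2, while position $N+1$ accounts exactly for the ``$-1$'' in $s(\sigma)-1$).
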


For example, in Figure~\ref{ex_FV} we have a permutation $\sigma$ such that
\[ \alpha^{s(\sigma)-1} \beta^{t(\sigma)-1} y^{\asc(\sigma)-1}q^{\hbox{\scriptsize \mot}(\sigma)}= 
   \alpha^2\beta^3y^5q^7.
\]
Indeed $\Psi_{FV}(H)$ has total weight $y^5q^7$, has four type 1 steps and two type 2 steps to the
right of the type 1 steps.

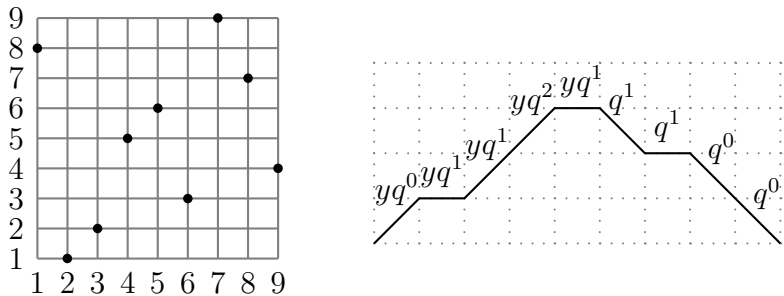
\begin{figure}[h!tp]\psset{unit=4mm} \centering
\begin{pspicture}(0,0)(9,9)
\rput(0.3,1){1}
\rput(0.3,2){2}\rput(0.3,3){3}\rput(0.3,4){4}\rput(0.3,5){5}\rput(0.3,6){6}
\rput(0.3,7){7}\rput(0.3,8){8}\rput(0.3,9){9}\rput(1,0.2){1}\rput(2,0.2){2}
\rput(3,0.2){3}\rput(4,0.2){4}\rput(5,0.2){5}\rput(6,0.2){6}\rput(7,0.2){7}
\rput(8,0.2){8}\rput(9,0.2){9}
\psline[linecolor=gray](1,1)(1,9)\psline[linecolor=gray](1,1)(9,1)
\psline[linecolor=gray](2,1)(2,9)\psline[linecolor=gray](1,2)(9,2)
\psline[linecolor=gray](3,1)(3,9)\psline[linecolor=gray](1,3)(9,3)
\psline[linecolor=gray](4,1)(4,9)\psline[linecolor=gray](1,4)(9,4)
\psline[linecolor=gray](5,1)(5,9)\psline[linecolor=gray](1,5)(9,5)
\psline[linecolor=gray](6,1)(6,9)\psline[linecolor=gray](1,6)(9,6)
\psline[linecolor=gray](7,1)(7,9)\psline[linecolor=gray](1,7)(9,7)
\psline[linecolor=gray](8,1)(8,9)\psline[linecolor=gray](1,8)(9,8)
\psline[linecolor=gray](9,1)(9,9)\psline[linecolor=gray](1,9)(9,9)
\psdots(1,8)(2,1)(3,2)(4,5)(5,6)(6,3)(7,9)(8,7)(9,4)
\end{pspicture}  
\hspace{1cm} \psset{unit=6mm}
\begin{pspicture}(0,-1)(9,4)
\psgrid[gridcolor=gray,griddots=4,subgriddiv=0,gridlabels=0](0,0)(9,4)
\psline(0,0)(1,1)(2,1)(3,2)(4,3)(5,3)(6,2)(7,2)(8,1)(9,0)
\rput(0.5,1.2){$yq^0$}
\rput(1.5,1.6){$yq^1$}
\rput(2.5,2.2){$yq^1$}
\rput(3.5,3.2){$yq^2$}
\rput(4.6,3.6){$yq^1$}
\rput(5.5,3.2){$q^1$}
\rput(6.5,2.6){$q^1$}
\rput(7.7,2.1){$q^0$}
\rput(8.7,1.1){$q^0$}
\end{pspicture}
\caption{\label{ex_FV}
The permutation $\sigma=812563974$ and its image by $\Psi_{FV}$.}
\end{figure}

\begin{rem}
We have mentioned in the introduction that the non-normalized probability of a particular 
state of the PASEP is a product $\bra{W}t_1\dots t_N\ket{V}$. It is known \cite{CW1} that 
in the combinatorial interpretation \eqref{ZTP}, this state of the PASEP corresponds to 
permutation tableaux of a given shape. It is also known \cite{CW1} that in the combinatorial 
interpretation \eqref{depart}, this state of the PASEP corresponds to permutations with a given 
set of weak exceedances (namely, $i+1$ is a weak exceedance if and only if $t_i=D$, {\it i.e.} 
the $i$th site is occuppied). It is also possible to give such criterions for the new 
combinatorial interpretations of Theorems \ref{histoires} and \ref{main}, by following 
the weak exceedances set through the bijections we have used. More precisely, in the first 
case the term $\bra{W}t_1\dots t_N\ket{V}$ is the generating function of Laguerre histories 
$H$ such that $t_i=D$ if and only if the $(N+1-i)$th step in $H$ is either a step $\rightarrow$ 
with weight $yq^i$ or a step $\searrow$. In the second case, the term $\bra{W}t_1\dots t_N\ket{V}$
is the generating function of permutations $\sigma$ such that $t_i=D$ if and only if 
$\sigma^{-1}(N+1-i)$ is a double ascent or a peak.
\end{rem}

\section{A first combinatorial derivation of $Z_N$ using lattice paths}
\label{paths}

In this section, we give the first proof of Theorem~\ref{Z_th}.

\smallskip

We consider the set $\mathfrak{P}_N$ of weighted Motzkin paths of length $N$ such that:
\begin{itemize}
\item the weight of a step $\nearrow$ starting at height $h$ is $q^i-q^{i+1}$ 
      for some $i\in\{0,\dots,h\}$,
\item the weight of a step $\rightarrow$ starting at height $h$ is either $1+y$ 
      or $(\alp+y\bet)q^h$,
\item the weight of a step $\searrow$ starting at height $h$ is either $y$ or 
      $-y\alp\bet q^{h-1}$.
\end{itemize}
The sum of weights of elements in $\mathfrak{P}_N$ is $(1-q)^N Z_N$ because the 
weights sum to the ones in \eqref{Z_w}. We stress that on the combinatorial
point of view, it will be important to distinguish $(h+1)$ kinds of step $\nearrow$ 
starting at height $h$, instead of one kind of step $\nearrow$ with weight $1-q^{h+1}$. 

\smallskip

We will show that each element of $\mathfrak{P}_N$ bijectively corresponds to a pair of 
weighted Motzkin paths. The first path (respectively, second path) belongs to a set whose generating 
function is $R_{N,n}(y,q)$ (respectively, $B_n(\alp,\bet,y,q)$) for some $n\in\{0,\dots,N\}$. 
Following this scheme, our first combinatorial proof of \eqref{Z} is a consequence of
Propositions~\ref{RNn}, \ref{Bn}, and \ref{decomp} below.

\subsection{The lattice paths for $R_{N,n}(y,q)$}

Let $\mathfrak{R}_{N,n}$ be the set of weighted Motzkin paths of length $N$ such that:
\begin{itemize}
\item the weight of a step $\nearrow$ starting at height $h$ is either $1$ or $-q^{h+1}$,
\item the weight of a step $\rightarrow$ starting at height $h$ is either $1+y$ or $q^h$,
\item the weight of a step $\searrow$ is $y$,
\item there are exactly $n$ steps $\rightarrow$ weighted by a power of $q$.
\end{itemize}

In this subsection we prove the following:

\begin{prop} \label{RNn}
The sum of weights of elements in $\mathfrak{R}_{N,n}$ is $R_{N,n}(y,q)$.
\end{prop}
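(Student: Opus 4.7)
My plan is to classify paths in $\mathfrak{R}_{N,n}$ by the integer $i$ equal to the number of $\nearrow$-steps bearing the negative weight $-q^{h+1}$, and then to extract the outer summand $(-y)^i q^{\binom{i+1}{2}}\tqbin{n+i}{i}$ from those $i$ marked up-steps together with their partner $\searrow$-steps. Each marked up-step must be compensated by an extra $\searrow$-step (weight $y$) somewhere later in the path, which immediately yields the factor $(-y)^i$: the $(-1)^i$ from the signs in $-q^{h+1}$, and $y^i$ from the $i$ partner down-steps.

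For the $q^{\binom{i+1}{2}}\tqbin{n+i}{i}$ factor, I would view the $i$ marked up-steps together with the $n$ horizontal $q^h$-weighted steps jointly as $n+i$ ``$q$-marked'' steps embedded in a surrounding Motzkin skeleton, and sum $q^{(h_1+1)+\cdots+(h_i+1)}$ over the choices of positions of the $i$ marked up-steps among these $n+i$ slots. This sum is the classical $q$-area enumeration of lattice paths in an $i\times n$ rectangle, shifted by the minimum $\binom{i+1}{2}$, and equals $q^{\binom{i+1}{2}}\tqbin{n+i}{i}$ by the standard area interpretation of the Gaussian binomial. After extracting these marked items, the remaining data should be a Motzkin-like path of length $N-2i$ with $\nearrow$-weight $1$, $\searrow$-weight $y$, and $\rightarrow$-weight $1+y$. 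Splitting each horizontal step into its $1$- and $y$-branches turns such paths into non-negative lattice paths of length $N$ with two step types (an ``up-like'' step of weight $1$ and a ``down-like'' step of weight $y$), and the reflection principle (equivalently Lindström--Gessel--Viennot applied to a pair of endpoints) gives the number of such paths with exactly $j$ excess $y$-steps as the determinant $\binom{N}{j}\binom{N}{n+2i+j}-\binom{N}{j-1}\binom{N}{n+2i+j+1}$. Weighting by $y^j$ and summing on $j$ produces precisely the inner sum.

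The main obstacle will be the height bookkeeping: removing the $i$ marked $\nearrow/\searrow$ pairs shifts the heights seen by the intervening $q^h$-weighted horizontal steps, so one has to verify that the total $q$-contribution reconstitutes exactly as the clean product $q^{\binom{i+1}{2}}\tqbin{n+i}{i}$ without leftover cross-terms, and that the residual path truly behaves like an unconstrained lattice path of length $N$ rather than of length $N-2i$ (so that the binomials $\binom{N}{\cdot}$ are the correct ones). Since the author explicitly notes that this proposition ``partially relies on results of \cite{CJPR,MJV},'' I would expect to invoke those earlier path enumerations to handle this delicate $q$-accounting cleanly rather than redoing it from first principles.
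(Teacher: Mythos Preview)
Your decomposition does not match the summands of the formula. The index $i$ in $R_{N,n}(y,q)$ is \emph{not} the number of $\nearrow$-steps carrying the negative weight $-q^{h+1}$. A small check shows this: take $N=3$, $n=1$. The paths with no negative up-step contribute $3(1+y)^2+2y+qy$, while the $i=0$ summand of $R_{3,1}$ is $3+8y+3y^2$; the paths with exactly one negative up-step contribute $-2qy-q^2y$, while the $i=1$ summand is $-qy-q^2y$. The totals agree, but the individual layers do not, so no amount of height bookkeeping will make your extraction of $(-y)^i q^{\binom{i+1}{2}}\tqbin{n+i}{i}$ work term by term. The obstacles you flag at the end (the residual path wanting to have length $N$ rather than $N-2i$, and the cross-terms in the $q$-exponent) are genuine symptoms of this mismatch, not mere technicalities.

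The paper's route is structurally different. It decomposes each element of $\mathfrak{R}_{N,n}$ into a pair $(P,C)$ where $P$ is a Motzkin \emph{prefix} of the full length $N$ ending at height $n+2i$ (this is why $\binom{N}{\cdot}$ appears) and $C$ is a separate ``core'' Motzkin path of length $n+2i$ carrying \emph{all} the $q$-weighted steps. In this bijection the up-steps of $C$ can still have weight $1$ or $-q^h$, subject to the constraint that no peak $\nearrow\searrow$ has both steps of weight $1$; the formula $(-1)^i q^{\binom{i+1}{2}}\tqbin{n+i}{i}$ is then the \emph{signed} sum over all such cores with a fixed $i$, obtained via a further sign-reversing argument (Lemma~\ref{core}, proved in \cite{MJV}). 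So the $i$ of the formula is a parameter of the $(P,C)$-decomposition, and the clean $q$-product only emerges after cancellations among cores, not from isolating negatively-weighted up-steps in the original path.
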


This can be obtained with the methods used in \cite{CJPR,MJV}, and the result is a 
consequence of the Lemmas~\ref{decomp2}, \ref{motzbi} and \ref{core} below.

\begin{lem} \label{decomp2}
There is a weight-preserving bijection between $\mathfrak{R}_{N,n}$, and the pairs $(P,C)$
such that for some $i\in\{0,\dots,\lfloor\frac{N-n}2\rfloor\}$, 
\begin{itemize}
\item $P$ is a Motzkin prefix of length $N$ and final height $n+2i$, with a 
weight $1+y$ on every step $\rightarrow$, and a weight $y$ on every step $\searrow$,
\item $C$ is a Motzkin path of length $n+2i$, such that
\begin{equation} \label{def_core}
\parbox{13cm}{
\begin{itemize}
\item the weight of a step $\nearrow$ starting at height $h$ is $1$ or $-q^h$, 
\item the weight of a step $\rightarrow$ starting at height $h$ is $q^h$,
\item the weight of a step $\searrow$ is $1$,
\item there are exactly $n$ steps $\rightarrow$, and no steps $\nearrow\searrow$ both with weights 1.
\end{itemize}}
\end{equation}
\end{itemize}
\end{lem}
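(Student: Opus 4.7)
The plan is to set up an explicit weight-preserving bijection $\pi \mapsto (P, C)$. I will separate the steps of each $\pi \in \mathfrak{R}_{N,n}$ into its ``simple'' steps (whose weights lie in $\{1, 1+y, y\}$) and its ``marked'' steps (the $i$ ascents weighted $-q^{h+1}$ and the $n$ horizontals weighted $q^h$). The integer $i$ attached to the image pair will be the number of marked ascents of $\pi$.

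To construct $P$ from $\pi$, I would keep every simple step unchanged, convert each marked horizontal $\rightarrow$ into an ascent $\nearrow$, and convert the Dyck partner of each marked ascent (the matching descending $\searrow$) also into an ascent $\nearrow$; the marked ascent itself remains an $\nearrow$. A direct count then shows that $P$ is a Motzkin prefix of length $N$ with final height exactly $n+2i$: each marked horizontal contributes $+1$ to the final height, and each of the $i$ converted matched pairs $\nearrow\searrow \mapsto \nearrow\nearrow$ contributes $+2$. The resulting path already carries the weights $1$ on $\nearrow$, $1+y$ on $\rightarrow$, $y$ on $\searrow$ required by the statement. To construct $C$, I would take the ordered subsequence of $\pi$ consisting of its marked steps together with the Dyck partners of its marked ascents; this is a Motzkin path of length $n+2i$ with exactly $n$ horizontal steps, and its intrinsic height profile then prescribes the weights claimed for $C$.

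The hard part will be the verification of weight preservation. The $q$-weight of a marked step of $\pi$ depends on its height \emph{within} $\pi$, whereas the corresponding weight in $C$ depends on the height \emph{within} $C$; these two heights differ by contributions coming from the simple steps separating the marked ones. I would argue that this discrepancy telescopes, so that $w(\pi) = w(P)\,w(C)$ step by step. The forbidden configuration ``no $\nearrow\searrow$ both with weights $1$'' in $C$ corresponds to the combinatorial distinction between a genuine marked ascent of $\pi$ and an ascent inherited from a converted $\searrow$, and is precisely what rules out spurious $(P,C)$ pairs, thereby making the inverse map well-defined: given $(P,C)$, the nesting structure of $C$ selects $n+2i$ distinguished ascents of $P$ to be un-converted back into marked horizontals, marked ascents, and their matched descents. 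The entire construction follows the same strategy as the lattice-path decompositions of \cite{CJPR, MJV}, which the paper invokes for the technical bookkeeping.
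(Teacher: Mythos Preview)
Your construction is not the one from \cite{CJPR} that the paper invokes, and the gap is substantive rather than a matter of deferred bookkeeping. Take $\pi=\nearrow_{1}\,\rightarrow_{q}\,\searrow_{y}\in\mathfrak R_{3,1}$: one simple ascent, one marked horizontal at height~$1$, one descent. Under your rule there are no marked ascents, so $i=0$, your $P$ is $\nearrow\nearrow\searrow$, and your $C$ is the single horizontal step sitting at intrinsic height~$0$ with weight $q^{0}=1$; hence $w(P)\,w(C)=y$ while $w(\pi)=qy$. The discrepancy comes from the simple matched pair $\nearrow_{1}\cdots\searrow_{y}$ \emph{enclosing} the marked horizontal: it raises the height of that step inside $\pi$ but is absent from your $C$, and this nesting contribution cannot telescope, because it depends on which simple arches straddle each marked step, not on any running left-to-right difference. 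For the same reason your map is not surjective: your $C$ never contains an ascent of weight~$1$, yet the target explicitly allows such ascents provided they are not immediately followed by a~$\searrow$.

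The idea you are missing is that a simple matched pair $(\nearrow_{1},\searrow)$ of $\pi$ must itself go into $C$ whenever some step already assigned to $C$ lies strictly between them. Equivalently, $C$ is what remains after iteratively stripping from $\pi$ the simple $\rightarrow_{1+y}$ steps and those simple pairs $\nearrow_{1}\cdots\searrow$ that enclose no $C$-material; the stripped steps, together with an $\nearrow$ placeholder at each surviving position, constitute $P$. With this rule no stripped arch can straddle a surviving step, so every step of $C$ sits at the \emph{same} height as it did in $\pi$ and the $q$-weights match exactly, with no telescoping needed. In the counterexample this yields $i=1$, $C=\nearrow_{1}\rightarrow_{q}\searrow$, $P=\nearrow\nearrow\nearrow$, and you now see that the forbidden pattern $\nearrow_{1}\searrow$ in $C$ is precisely the residue of a simple pair enclosing nothing marked --- which by construction would already have been stripped into~$P$.
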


\begin{proof} This is a direct adaptation of \cite[Lemma~1]{CJPR}.
\end{proof}

\begin{lem} \label{motzbi}
The generating function of Motzkin prefixes of length $N$ and final height $n+2i$, with a 
weight $1+y$ on every step $\rightarrow$, and a weight $y$ on every step $\searrow$, is 
\[   
   \sum_{j=0}^{N-n-2i}y^j\left( \tbinom Nj \tbinom N{n+2i+j}-
   \tbinom N{j-1} \tbinom N{n+2i+j+1}  \right).
\]
\end{lem}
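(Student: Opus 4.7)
The plan is to reduce the count to the reflection principle via an encoding of weighted Motzkin prefixes as pairs of binary strings. First I would expand the factor $(1+y)$ on each horizontal step, so that each weighted Motzkin prefix becomes a \emph{bi-colored} Motzkin prefix whose flat steps come in two flavors $F_1$ (weight $1$) and $F_y$ (weight $y$); the total weight is then $y^c$, where $c$ counts the $F_y$-flats plus the $\searrow$-steps. It suffices to show that, for each $j$, the number of bi-colored Motzkin prefixes of length $N$ ending at height $k := n+2i$ with exactly $j$ weight-$y$ steps equals $\binom{N}{j}\binom{N}{k+j} - \binom{N}{j-1}\binom{N}{k+j+1}$.

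Next I would encode each bi-colored prefix, step by step, as a pair of binary words $(A,B) \in \{0,1\}^N \times \{0,1\}^N$ via $\nearrow \leftrightarrow (1,0)$, $\searrow \leftrightarrow (0,1)$, $F_1 \leftrightarrow (0,0)$, $F_y \leftrightarrow (1,1)$. Writing $a_m, b_m$ for the partial sums of $A, B$, one checks directly that the height of the prefix after $m$ steps equals $a_m - b_m$, the Motzkin non-negativity condition becomes the pointwise inequality $a_m \geq b_m$ for $0 \leq m \leq N$, and the total $y$-weight of the prefix is $y^{b_N}$. Matching final height $k$ and weight $y^j$ forces $|A| = k+j$ and $|B| = j$, so the statement reduces to counting pairs $(A,B) \in \{0,1\}^N \times \{0,1\}^N$ of these sizes satisfying $a_m \geq b_m$ throughout.

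For this last count I would apply a standard reflection argument. The unconstrained number of such pairs is $\binom{N}{k+j}\binom{N}{j}$. For each ``bad'' pair let $m^\ast$ be the smallest index with $a_{m^\ast} < b_{m^\ast}$; then necessarily $a_{m^\ast} = b_{m^\ast}-1$ and $(A_{m^\ast}, B_{m^\ast}) = (0,1)$. Swapping $A$ with $B$ coordinatewise on $\{1,\dots,m^\ast\}$ produces $(A',B')$ with $|A'| = k+j+1$ and $|B'| = j-1$. Conversely, any pair of those shifted sizes has $a$ equal to $b$ at the start and strictly ahead at the end, so there is a first index at which $a$ overtakes $b$, and the inverse swap recovers a unique bad pair. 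The bad pairs are thus in bijection with \emph{all} $\binom{N}{k+j+1}\binom{N}{j-1}$ pairs of the shifted sizes; subtracting and summing over $j$ from $0$ to $N-n-2i$ (the range imposed by $|A|=k+j\leq N$) yields the claimed formula.

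The only delicate point is choosing the step-to-bit encoding so that height, non-negativity, and $y$-weight all translate cleanly at the same time; once the correct encoding is in hand, the reflection step is entirely classical and I expect no further obstacles.
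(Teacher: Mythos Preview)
Your argument is correct. The encoding of bi-colored Motzkin prefixes by pairs $(A,B)\in\{0,1\}^N\times\{0,1\}^N$ is a bijection onto all such pairs, and under it the height, the non-negativity constraint, and the power of $y$ become exactly what you claim; the reflection step (swapping $A$ and $B$ on $\{1,\dots,m^\ast\}$) is the standard Andr\'e-type involution on pairs of lattice paths, and your description of its inverse is accurate since in the shifted pair $a'_{m^\ast}=b'_{m^\ast}+1$ at the \emph{first} index where $a'$ exceeds $b'$, which recovers $m^\ast$.

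As for the comparison: the paper does not actually prove this lemma but defers to \cite[Proposition~4]{CJPR}. Your proof is therefore a self-contained alternative. The step-to-bit encoding is the main idea, and it is a nice one: it simultaneously linearizes the three conditions (height, positivity, $y$-weight) so that a single reflection suffices, whereas a more naive approach would first fix the number of horizontal steps and then apply the ballot/reflection argument to the remaining $\nearrow$ and $\searrow$ steps, leading to a convolution that must be summed afterwards. Your encoding hides that convolution inside the product $\binom{N}{j}\binom{N}{k+j}$ and yields the closed form directly.
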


\begin{proof} This was given in \cite[Proposition~4]{CJPR}.
\end{proof}

\begin{lem} \label{core}
The sum of weights of Motzkin paths of length $n+2i$ satisfying properties \eqref{def_core}
above is $(-1)^iq^{\binom{i+1}2}\qbin {n+i}i$.
\end{lem}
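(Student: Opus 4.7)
My plan is to prove the lemma via a sign-reversing involution, following the general technique of \cite{CJPR, MJV}. The right-hand side has a transparent combinatorial meaning: since
\[
q^{\binom{i+1}{2}} \qbin{n+i}{i} = \sum_{1 \le a_1 < a_2 < \cdots < a_i \le n+i} q^{a_1 + a_2 + \cdots + a_i},
\]
the target generating function (up to the prefactor $(-1)^i$) enumerates $i$-element subsets of $\{1, \ldots, n+i\}$ weighted by the sum of their elements. So the first goal is to produce an involution whose fixed points are in natural bijection with such subsets, each contributing the appropriate $q$-power with a uniform sign $(-1)^i$.

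I would encode a weighted path as a pair $(M, \mu)$ where $M$ is the underlying Motzkin path of length $n+2i$ (with $i$ up, $i$ down, $n$ horizontal steps) and $\mu$ is a marking of each up step: ``marked'' contributes $-q^h$ and ``unmarked'' contributes $1$, subject to the condition that every peak is marked (this is the ``no $\nearrow\searrow$ both with weight $1$'' constraint). With this encoding, the total weight factors as a sign $(-1)^{\#\text{marked}}$ times a product of $q^h$ contributions from the marked up steps and the horizontal steps. I would then scan $M$ from left to right (say) looking for the leftmost ``reducible'' local configuration—candidates include a marked up step immediately adjacent to a horizontal step of matching height (which can be interchanged), or a marked non-peak up step paired with a structured modification of the horizontal/down steps following it. The local move must flip the sign and preserve the absolute weight.

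The fixed points are those paths admitting no local reduction. I expect these to be exactly the ``canonical'' paths whose marked up steps sit at heights $a_1 - 1, a_2 - 1, \ldots$ and whose horizontal steps fill in the remaining positions in a staircase fashion corresponding to a chosen $i$-subset $\{a_1 < \cdots < a_i\}$ of $\{1, \ldots, n+i\}$. The product of $-q^h$ over marked up steps together with $q^h$ over horizontal steps then collapses to $(-1)^i q^{a_1 + \cdots + a_i}$, and summing over all $i$-subsets produces $(-1)^i q^{\binom{i+1}{2}}\qbin{n+i}{i}$.

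The main obstacle is designing the local involution so that it is well-defined and truly involutive in the presence of the peak constraint and the three step types at varying heights. The bookkeeping at the interface between marked up steps and horizontal steps is delicate, because the $q^h$ factors must cancel exactly when two configurations are paired, and the constraint that peaks must be marked rules out some otherwise-natural moves. Since, as the author notes, this is a direct adaptation of the methods of \cite{CJPR, MJV}, I would essentially transcribe the involution given there, adjusting for the fact that here horizontal steps carry weight $q^h$ and the ``no unmarked peak'' restriction forces certain pairings to go through a non-peak up step elsewhere along the path.
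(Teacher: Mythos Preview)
Your approach is the same as the paper's: the paper does not give an argument here but simply cites \cite[Lemmas~3,~4]{MJV} for a bijective proof, and you propose to transcribe the sign-reversing involution from that reference. So at the level of strategy you are aligned.

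That said, the part of your sketch that goes beyond ``use the involution from \cite{MJV}'' is not yet correct as written. Your guessed description of the fixed points---all up steps marked, sitting at heights $a_1-1,a_2-1,\ldots$ with horizontal steps ``filling in a staircase''---does not pin down a well-defined family of Motzkin paths, and a quick check on the case $n=1$, $i=1$ shows that simply taking all fully-marked paths overcounts (you get $-2q-q^2$ rather than $-q-q^2$, with the weight convention $-q^{h+1}$ for a marked up step). So the involution cannot merely toggle the marking on a non-peak up step; some cancellation must also occur among fully-marked paths, and the surviving fixed points are a strictly smaller, combinatorially characterized subfamily. You acknowledge this is the delicate point, but your tentative description of the survivors is not the right one, and you should not expect the weights to collapse as cleanly as you wrote without first getting that family right. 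Once you actually consult \cite{MJV}, the correct local move and fixed-point set will replace your guess; until then the sketch is a plan rather than a proof.
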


\begin{proof} A bijective proof was given in \cite[Lemmas~3, 4]{MJV}.
\end{proof}

Some precisions are in order. In \cite{CJPR} and \cite{MJV}, we obtained the formula \eqref{cas1}
which is the special case $\alpha=\beta=1$ in $Z_N$, and is the $N$th moment of the $q$-Laguerre
polynomials mentioned in Remark \ref{rem_poly}. Since $Z_N$ is also very closely related 
with these polynomials (see Section~\ref{ALSC}) it is not surprising that some steps are in
common between these previous results and the present ones. See also Remark \ref{comp1} below.

\subsection{The lattice paths for $B_n(\alp,\bet,y,q)$}

Let $\mathfrak{B}_n$ be the set of weighted Motzkin paths of length $n$ such that:
\begin{itemize}
\item the weight of a step $\nearrow$ starting at height $h$ is either $1$ or $-q^{h+1}$,
\item the weight of a step $\rightarrow$ starting at height $h$ is $(\alp+y\bet)q^h$,
\item the weight of a step $\searrow$ starting at height $h$ is $-y\alp\bet q^{h-1}$.
\end{itemize}

In this section we prove the following:

\begin{prop} \label{Bn}
The sum of weights of elements in $\mathfrak{B}_n$ is $B_n(\alp,\bet,y,q)$.
\end{prop}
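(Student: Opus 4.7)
My plan is to prove $f_n := \sum_{M \in \mathfrak{B}_n} \mathrm{wt}(M) = B_n$ via a sign-reversing involution on an expanded weight presentation of the paths.

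First, I would expand the weights multiplicatively. Decorate each $\rightarrow$-step at height $h$ with a color label $\alp$ (weight $\alp q^h$) or $y\bet$ (weight $y\bet q^h$), and each $\nearrow$-step at height $h$ with a sign label $+$ (weight $1$) or $-$ (weight $-q^{h+1}$); each $\searrow$ retains its weight $-y\alp\bet q^{h-1}$. Call the resulting signed set of decorated paths $\widetilde{\mathfrak B}_n$. For each such path, the monomial in $\alp$ and $y\bet$ is $\alp^k (y\bet)^{n-k}$ with $k$ equal to the number of $\alp$-colored $\rightarrow$-steps plus the number of $\searrow$-steps, so it suffices to show that for each fixed $k$ the signed sum contributing to $\alp^k (y\bet)^{n-k}$ equals $\qbin{n}{k}$.

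Next, for each fixed $k$ I would construct a sign-reversing involution $\iota$ on the relevant subset of $\widetilde{\mathfrak B}_n$, canceling non-fixed contributions pairwise. The involution scans the decoration from left to right and performs a canonical local modification at the first ``cancelable'' site --- for instance, pairing a $\nearrow_+$ arch of height $h$ with a neighboring horizontal pattern ($y\bet$ then $\alp$) at the same height; both produce opposite-sign contributions of equal magnitude, as one sees in the small-case check at $n = 2$ where $\rightarrow_{y\bet}\rightarrow_\alp$ (weight $\alp y\bet$) cancels $\nearrow_+\searrow$ (weight $-\alp y\bet$). Finally, I would verify that the fixed points of $\iota$ are in weight-preserving bijection with words $w \in \{\alp, y\bet\}^n$ having exactly $k$ letters $\alp$, weighted by $q^{\mathrm{inv}(w)}$. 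Since $\qbin{n}{k}$ is precisely this weighted word count, the sum over fixed points gives the desired $\qbin{n}{k}$. As a sanity check, for $n=3$, $k=2$ the surviving fixed points are $\rightarrow_\alp\rightarrow_\alp\rightarrow_{y\bet}$, $\nearrow_-\searrow\rightarrow_\alp$, and $\nearrow_-\rightarrow_\alp\searrow$, with respective weights $1$, $q$, $q^2$, summing to $\qbin{3}{2}$.

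The main obstacle is designing $\iota$ so that it is globally well-defined and respects the interplay of horizontal colors with arch structures at arbitrary heights --- in particular, making the $q^h$-factor in horizontal weights align with the $q^{h+1}$-factor in the $\nearrow_-$ label across the full range of heights, and handling nested arches without ambiguity. An alternative, perhaps cleaner route would be induction on $n$: the $q$-Pascal identity $\qbin{n}{k} = \qbin{n-1}{k-1} + q^k\qbin{n-1}{k}$ gives the recursion
\begin{equation*}
B_n(\alp, \bet, y, q) = \alp\, B_{n-1}(\alp, \bet, y, q) + y\bet\, B_{n-1}(q\alp, \bet, y, q),
\end{equation*}
and one then shows $f_n$ satisfies the same recurrence via a decomposition by first step, combined with the height-shift observation that paths in $\mathfrak{B}_n$ restricted to heights $\geq 1$ correspond (after shifting down by one) to paths in the analogous family with the parameter substitution $\alp \mapsto q\alp$, $\bet \mapsto q\bet$.
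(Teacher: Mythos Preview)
Neither approach in your proposal is complete, and the inductive alternative contains a concrete error. For the involution, your rule ``$\rightarrow_{y\bet}\rightarrow_{\alp}\;\leftrightarrow\;\nearrow_+\searrow$'' does not suffice even at $n=3$, $k=2$: among the six non-fixed decorated paths you must also pair $\rightarrow_{\alp}\nearrow_-\searrow$ (weight $q$) with $\nearrow_+\rightarrow_{\alp}\searrow$ (weight $-q$), and neither side exhibits your pattern. So the construction of $\iota$ is the entire content of the proof and is still missing. For the induction, the height-shift claim is false: under $\alp\mapsto q\alp$, $\bet\mapsto q\bet$ the $\nearrow$-weight $1-q^{h+1}$ is unchanged (it does not involve $\alp$ or $\bet$) rather than becoming $1-q^{h+2}$, and the $\searrow$-weight becomes $-y\alp\bet q^{h+1}$ rather than the required $-y\alp\bet q^{h}$. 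Equivalently, your substitution sends $\lambda_h$ to $(1-q^h)(-y\alp\bet q^{h+1})$, whereas $\lambda_{h+1}=(1-q^{h+1})(-y\alp\bet q^{h})$; these differ. The two-term recurrence $f_n=\alp\,f_{n-1}+y\bet\,f_{n-1}\big|_{\alp\to q\alp}$ does hold (it is the Rogers--Szeg\H{o} recurrence), but it is \emph{not} realised by a first-step or height-shift decomposition of the paths in $\mathfrak{B}_n$, and proving it from the path side needs a genuinely different argument.

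For comparison, the paper's two proofs are short and sidestep these issues. The first notes that the sum over $\mathfrak{B}_n$ is homogeneous of degree $n$ in $\alp,\bet$, hence equals the top-degree part of $(1-q)^nZ_n$ in $\tfrac1\alpha,\tfrac1\beta$; via the permutation-tableau interpretation this top-degree part counts tableaux with no $0$'s, which are just Young diagrams in a $k\times(n-k)$ box and give $\qbin{n}{k}$. The second recognises the weights in $\mathfrak{B}_n$ as those of rescaled Al-Salam--Carlitz polynomials, whose $n$th moment is classically $\sum_j\qbin{n}{j}a^j$; if you want a purely bijective path argument along your lines, the combinatorial proof of that moment formula in Kim \cite{DK} is the natural place to look.
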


\begin{proof} 
Let $\nu_n$ be the sum of weights of elements in $\mathfrak{B}_n$. It is
homogeneous of degree $n$ in $\alp$ and $\bet$ since each step $\rightarrow$ has degree 1
and each pair of steps $\nearrow$ and $\searrow$ has degree 2. By comparing the weights for
paths in $\mathfrak{B}_n$, and the ones in \eqref{Z_w}, we see that $\nu_n$ is the term of
$(1-q)^n Z_n$ with highest degree in $\alp$ and $\bet$. Since $\alp$ and $(1-q)\frac1\alpha$
(respectively, $\bet$ and $(1-q)\frac1\beta$) only differ by a constant, it remains only to 
show that the term of $\bar Z_n$ with highest degree in $\alpha$ and $\beta$ is 
$\sum_{k=0}^n\qbin{n}{k}\alpha^k(y\beta)^{n-k}$.

This follows from the combinatorial interpretation in Equation~\eqref{ZTP} in terms of permutation
tableaux (see Definition \ref{def_PT}). In the term of $\bar Z_n$ with highest degree in 
$\alpha$ and $\beta$, the coefficient of $\alpha^k\beta^{n-k}$ is obtained by counting
permutations tableaux of size $n+1$, with $n-k+1$ unrestricted rows, $k$ 1s in the first 
row. Such permutation tableaux have $n-k+1$ rows, $k$ columns, and contain no 0. They are 
in bijection with the Young diagrams that fit in a $k\times(n-k)$ box and give a factor 
$\qbin nk$.
\end{proof}

We can give a second proof in relation with orthogonal polynomials.

\begin{proof}
It is a consequence of properties of the Al-Salam-Carlitz orthogonal polynomials 
$U_k^{(a)}(x)$, defined by the recurrence \cite{ASCa,KoSw98}:
\begin{equation}
U_{k+1}^{(a)}(x) = xU_{k}^{(a)}(x) + (a+1)q^kU_{k}^{(a)}(x) + a(q^k-1) q^{k-1}U_{k-1}^{(a)}(x).
\end{equation}
Indeed, from Proposition~\ref{mu_ortho} the sum of weights of elements in $\mathfrak{B}_n$ is the
$n$th moment of the orthogonal polynomial sequence $\{P_k(x) \}_{k\geq 0} $ defined by 
\begin{equation}
P_{k+1}(x) = xP_k(x) + (\alp+y\bet)q^kP_k + (q^k-1)y\alp\bet q^{k-1}P_{k-1}.
\end{equation}
We have $P_k(x)=(y\bet)^k U_k^{(a)}( x  (y\bet)^{-1} )$ where $a=\alp(y\bet)^{-1}$, and the $n$th 
moment of the sequence $\{U_k^{(a)}(x)\}_{k\geq 0}$ is 
$\sum_{j=0}^k\qbin kj a^j$ (see \S 5 in \cite{ASCa}, or the article of D. Kim \cite[Section~3]{DK}
for a combinatorial proof). Then we can derive the moments of $\{P_k(x)\}_{k\geq0}$, and this 
gives a second proof of Proposition~\ref{Bn}.
\end{proof}

\smallskip

Another possible proof would be to write the generating function $\sum_{n=0}^\infty \nu_n z^n$
as a continued fraction with the usual methods \cite{Fla}, use a limit case of identity (19.2.11a) 
in \cite{CU} to relate this generating function with a basic hypergeometric series and then expand 
the series.

\subsection{The decomposition of lattice paths}

Let $\mathfrak{R}^*_{N,n}$ be defined exactly as $\mathfrak{R}_{N,n}$, except that the possible
weights of a step $\nearrow$ starting at height $h$ are $q^i-q^{i+1}$ with $i\in\{0,\dots,h\}$.
The sum of weights of elements in $\mathfrak{R}^*_{N,n}$ is the same as with $\mathfrak{R}_{N,n}$,
because the possible weights of a step $\nearrow$ starting at height $h$ sum to $1-q^{h+1}$.
Similarly let $\mathfrak{B}^*_n$ be defined exactly as $\mathfrak{B}_n$, except that the possible
weights of a step $\nearrow$ starting at height $h$ are $q^i-q^{i+1}$ with $i\in\{0,\dots,h\}$.

\begin{prop} \label{decomp}
There exists a weight-preserving bijection $\Phi$ between the disjoint union of 
$\mathfrak{R}^*_{N,n} \times \mathfrak{B}^*_n$ over $n\in\{0,\dots,N\}$, and $\mathfrak{P}_n$
(we understand that the weight of a pair is the product of the weights of each element).
\end{prop}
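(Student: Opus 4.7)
The plan is to define $\Phi$ as an ``insertion'' map and verify it is well-defined and invertible. Given $(R,B)\in\mathfrak{R}^*_{N,n}\times\mathfrak{B}^*_n$, let $m_1<\dots<m_n$ be the positions of the $n$ markers of $R$ (the $\rightarrow$ steps weighted by a power of $q$), and let $h_R^{(k)}$ be the height of $R$ just before $m_k$. To build $P=\Phi(R,B)$, keep every non-marker step of $R$ unchanged, and replace the $k$-th marker by the $k$-th step $b_k$ of $B=b_1\cdots b_n$; the weight of the step at position $m_k$ of $P$ is the product $q^{h_R^{(k)}}\cdot\mathrm{wt}_B(b_k)$. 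By induction on position, the $P$-height at each position equals the sum of the $R$- and $B$-heights at the matching cumulative positions, so $P$ is a non-negative Motzkin path of length $N$.

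First I would check that the weights land in the $\mathfrak{P}_N$ template. For $b_k=\rightarrow$ the product is $(\alp+y\bet)q^{h^{(k)}}$; for $b_k=\searrow$ it is $-y\alp\bet q^{h^{(k)}-1}$; and for $b_k=\nearrow$ with $B$-label $j\in\{0,\dots,h_B^{(k)}\}$ the identity $q^{h_R^{(k)}}(q^j-q^{j+1})=q^{h_R^{(k)}+j}-q^{h_R^{(k)}+j+1}$ produces a legal $\mathfrak{P}_N$-weight at height $h^{(k)}:=h_R^{(k)}+h_B^{(k)}$ with $P$-label $i=h_R^{(k)}+j\in\{h_R^{(k)},\dots,h^{(k)}\}$. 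Non-marker $\nearrow$ steps of $R$ transfer their $R$-label $j\in\{0,\dots,h_R\}$ directly to the $P$-label $i=j\in\{0,\dots,h\}$. Hence $P\in\mathfrak{P}_N$, and by construction $\mathrm{wt}(P)=\mathrm{wt}(R)\cdot\mathrm{wt}(B)$.

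For the inverse, given $P\in\mathfrak{P}_N$ the weights of the $\rightarrow$ and $\searrow$ steps immediately distinguish ``B-type'' (weight $(\alp+y\bet)q^h$ or $-y\alp\bet q^{h-1}$) from ``R-type''. To classify the $\nearrow$ steps I would scan $P$ from left to right, maintaining the running $R$-height $h_R$ and $B$-height $h_B$: a $\nearrow$ of label $i$ can only be $R$-type when $i\leq h_R$ (legal $R$-label) and only $B$-type when $i\geq h_R$ (so that $i-h_R\in\{0,\dots,h_B\}$ is a legal $B$-label). Combined with the global constraint that $B$ must be a Motzkin path whose number of $\searrow$ steps matches the fixed number of B-$\searrow$'s of $P$, this determines a unique B-assignment; from it one reads off $R$ (markers at the B-positions, identical step type and weight elsewhere) and $B$ (the extracted B-steps with weights $\mathrm{wt}_P/q^{h_R}$).

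The main obstacle is to show that this inverse is always well-defined, i.e., that for every $P\in\mathfrak{P}_N$ there is a unique consistent choice of B-$\nearrow$ set. Locally the only ambiguity is at a $\nearrow$ with $i=h_R$, where both classifications are legal; I would resolve it inductively, using that any ``premature'' B-$\nearrow$ would either force $h_B<0$ later or force a subsequent non-B $\nearrow$ to occur with $i>h_R$, violating the $R$-label constraint. Conversely, a ``missed'' B-$\nearrow$ would leave some later B-$\searrow$ without a partner inside $B$. These two rigidity arguments together pin down the unique valid assignment, and weight preservation then follows directly from the forward construction, yielding the claimed bijection.
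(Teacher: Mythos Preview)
Your forward map $\Phi$ is exactly the paper's: replace the $n$ marked horizontal steps of $R$ by the successive steps of $B$, multiplying weights. Your verification that the result lies in $\mathfrak{P}_N$ and that weights multiply correctly is fine. The difference, and the gap, is entirely in the inverse.

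You scan $P$ from \emph{left to right}, maintaining a running $h_R$, and you correctly observe that a step $\nearrow$ with label $i=h_R$ is locally ambiguous: it is a legal $R$-step (top label) and a legal $B$-step ($B$-label~$0$). You then try to resolve this ambiguity by a global argument, but that argument is not complete. Your list of failure modes (``premature $B$ forces $h_B<0$ or a later $\nearrow$ with $i>h_R$''; ``missed $B$ leaves a $B$-$\searrow$ unmatched'') is not exhaustive: for instance, a premature $B$-choice can instead cause $h_R<0$ at a later $R$-type $\searrow$ (weight~$y$), a case you do not mention. More seriously, even granting that every wrong local choice eventually fails, you have not shown that \emph{some} sequence of choices succeeds for every $P\in\mathfrak{P}_N$; your argument tacitly assumes surjectivity of $\Phi$ rather than proving it.

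The paper avoids all of this by scanning $P$ from \emph{right to left}. The point is that from the right one maintains $h'$, the starting height of the $H_1$-suffix already built, which is the height of $R$ \emph{after} the current step. For an $R$-type $\nearrow$ of label $i$ one has $h'=h_R^{\mathrm{before}}+1$, so the legality condition $i\le h_R^{\mathrm{before}}$ becomes $i<h'$; for a $B$-type $\nearrow$ one has $h'=h_R^{\mathrm{before}}$, so $i\ge h_R^{\mathrm{before}}$ becomes $i\ge h'$. These two conditions are now \emph{disjoint}, so the rule ``$i<h'\Rightarrow R$-type, $i\ge h'\Rightarrow B$-type'' is locally deterministic and one checks directly that it produces valid Motzkin suffixes at every stage. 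This single change of direction is what turns your heuristic into a proof.
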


To define the bijection, we start from a pair $(H_1,H_2)\in\mathfrak{R}^*_{N,n} 
\times \mathfrak{B}^*_n$ for some $n\in\{0,\dots,N\}$ and build a path 
$\Phi(H_1,H_2)\in\mathfrak{P}_N$. Let $i\in\{1,\dots,N\}$.
\begin{itemize}
\item If the $i$th step of $H_1$ is a step $\rightarrow$ weighted by a power 
      of $q$, say the $j$th one among the $n$ such steps, then:
  \begin{itemize}
    \item  the $i$th step $\Phi(H_1,H_2)$ has the same direction as the $j$th step  of $H_2$,
    \item  its weight is the product of weights of the $i$th step of $H_1$ and the $j$th step 
       of $H_2$.
  \end{itemize}
\item Otherwise the $i$th step of $\Phi(H_1,H_2)$ has the same direction and 
  same weight as the $i$th step of $H_1$.
\end{itemize}
See Figure~\ref{decomp_ex} for an example, where the thick steps correspond to the ones in
the first of the two cases considered above. It is immediate that the total weight of $\Phi(H_1,H_2)$ 
is the product of the total weights of $H_1$ and $H_2$.

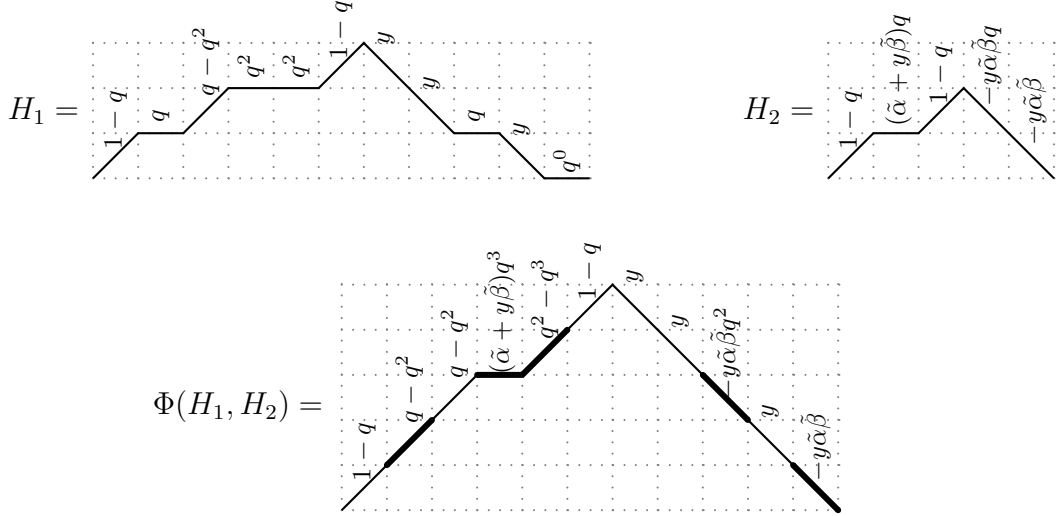
\begin{figure}[h!tp] \centering
\psset{unit=6mm}
\begin{pspicture}(-1.5,0)(11,4)
\psgrid[gridcolor=gray,griddots=4,subgriddiv=0,gridlabels=0](0,0)(11,3)
\psline(0,0)(1,1)(2,1)(3,2)(5,2)(6,3)(7,2)(8,1)(9,1)(10,0)(11,0)
\rput(0.5,1.3) {\footnotesize \begin{sideways} $1-q$ \end{sideways}}
\rput(1.5,1.4) {\footnotesize \begin{sideways} $q$ \end{sideways}}
\rput(2.5,2.6) {\footnotesize \begin{sideways} $q-q^2$ \end{sideways}}
\rput(3.5,2.5) {\footnotesize \begin{sideways} $q^2$ \end{sideways}}
\rput(4.5,2.5) {\footnotesize \begin{sideways} $q^2$ \end{sideways}}
\rput(5.5,3.3) {\footnotesize \begin{sideways} $1-q$ \end{sideways}}
\rput(6.5,3.1) {\footnotesize \begin{sideways} $y$ \end{sideways}}
\rput(7.5,2.1) {\footnotesize \begin{sideways} $y$ \end{sideways}}
\rput(8.5,1.4) {\footnotesize \begin{sideways} $q$ \end{sideways}}
\rput(9.5,1.1) {\footnotesize \begin{sideways} $y$ \end{sideways}}
\rput(10.5,0.4){\footnotesize \begin{sideways} $q^0$ \end{sideways}}
\rput(-1.1,1.5){$H_1=$}
\end{pspicture} 
\hspace{2cm}
\psset{unit=6mm}
\begin{pspicture}(-1.5,0)(5,4)
\psgrid[gridcolor=gray,griddots=4,subgriddiv=0,gridlabels=0](0,0)(5,3)
\psline(0,0)(1,1)(2,1)(3,2)(4,1)(5,0)
\rput(0.5,1.3){\footnotesize \begin{sideways} $1-q$ \end{sideways}}
\rput(1.5,2.4){\footnotesize \begin{sideways} $(\alp+y\bet)q$ \end{sideways}}
\rput(2.5,2.3){\footnotesize \begin{sideways} $1-q$ \end{sideways}}
\rput(3.5,2.5){\footnotesize \begin{sideways} $-y\alp\bet q$ \end{sideways}}
\rput(4.5,1.5){\footnotesize \begin{sideways} $-y\alp\bet $ \end{sideways}}
\rput(-1.1,1.5){$H_2=$}
\end{pspicture}
\vspace{6mm}

\begin{pspicture}(-2,0)(11,6.3)
\psgrid[gridcolor=gray,griddots=4,subgriddiv=0,gridlabels=0](0,0)(11,5)
\psline(0,0)(3,3)(4,3)(6,5)(11,0)
\rput(0.5,1.3) {\footnotesize \begin{sideways} $1-q$ \end{sideways}}
\rput(1.5,2.7) {\footnotesize \begin{sideways} $q-q^2$ \end{sideways}}
\rput(2.5,3.7) {\footnotesize \begin{sideways} $q-q^2$ \end{sideways}}
\rput(3.5,4.4) {\footnotesize \begin{sideways} $(\alp+y\bet)q^3$ \end{sideways}}
\rput(4.5,4.7) {\footnotesize \begin{sideways} $q^2-q^3$ \end{sideways}}
\rput(5.5,5.3) {\footnotesize \begin{sideways} $1-q$ \end{sideways}}
\rput(6.5,5.2) {\footnotesize \begin{sideways} $y$ \end{sideways}}
\rput(7.5,4.2) {\footnotesize \begin{sideways} $y$ \end{sideways}}
\rput(8.5,3.5) {\footnotesize \begin{sideways} $-y\alp\bet q^2$ \end{sideways}}
\rput(9.5,2.2) {\footnotesize \begin{sideways} $y$ \end{sideways}}
\rput(10.5,1.4){\footnotesize \begin{sideways} $-y\alp\bet$ \end{sideways}}
\psline[linewidth=0.8mm]{c-c}(1,1)(2,2)
\psline[linewidth=0.8mm]{c-c}(3,3)(4,3)(5,4)
\psline[linewidth=0.8mm]{c-c}(8,3)(9,2)
\psline[linewidth=0.8mm]{c-c}(10,1)(11,0)
\rput(-2.3,2.3){$\Phi(H_1,H_2)=$}
\end{pspicture} 
\caption{Example of paths $H_1$, $H_2$ and their image $\Phi(H_1,H_2)$ \label{decomp_ex}.}
\end{figure}

The inverse bijection is not as simple. Let $H\in\mathfrak{P}_N$. The method consists in reading
$H$ step by step from right to left, and building two paths $H_1$ and $H_2$ step by step so that 
at the end we obtain a pair $(H_1,H_2)\in\mathfrak{R}^*_{N,n} \times \mathfrak{B}^*_n$ for some 
$n\in\{0,\dots,N\}$. At each intermediate stage, we have built two Motzkin {\it suffixes}, 
{\it i.e.} some paths similar to Motzkin paths except that the starting height may be non-zero. 

Let us fix some notation. Let $H^{(j)}$ be the Motzkin suffix obtained by taking the $j$ last
steps of $H$. Suppose that we have already read the $j$ last steps of $H$, and built two Motzkin 
suffixes $H_1^{(j)}$ and $H_2^{(j)}$. We describe how to iteratively obtain $H_1^{(j+1)}$ and 
$H_2^{(j+1)}$. Note that $H_1^{(0)}$ and $H_2^{(0)}$ are empty paths. Let $h$, $h'$, and $h''$
be the respective starting heights of $H^{(j)}$, $H_1^{(j)}$ and $H_2^{(j)}$.

This iterative construction will have the following properties, as will be immediate 
from the definition below:
\begin{itemize}
\item $H_1^{(j)}$ has length $j$, and the length of $H_2^{(j)}$ is the number of steps 
      $\rightarrow$ weighted by a power of $q$ in $H_1^{(j)}$.
\item We have $h=h'+h''$.
\item The map $\Phi$ as we described it can also be defined in the same way for Motzkin 
      suffixes, and is such that $H^{(j)} = \Phi(H_1^{(j)},H_2^{(j)})$.
\end{itemize}
To obtain $H_1^{(j+1)}$ and $H_2^{(j+1)}$, we read the $(j+1)$th step 
in $H$ starting from the right, and add steps to the left of $H_1^{(j)}$ and $H_2^{(j)}$ 
according to the following rules:

\begin{center}
\begin{tabular}{c|c|c}
  step read in $H$   & step added to  $H_1^{(j)}$   &  step added to $H_2^{(j)}$ \\[2mm]
\hline {\huge \phantom l}\hspace{-3mm}
$\searrow$  $-y\alp\bet q^h$  &  $\rightarrow$ $q^{h'}$ & $\searrow$ $-y\alp\bet q^{h''}$ \\[2mm]
\hline {\huge \phantom l}\hspace{-3mm}
$\searrow$  $y$  &  $\searrow$ $y$  &       \\[2mm]
\hline {\huge \phantom l}\hspace{-3mm}
$\rightarrow$  $1+y$  &  $\rightarrow$ $1+y$  &       \\[2mm]
\hline {\huge \phantom l}\hspace{-3mm}
$\rightarrow$  $(\alp+y\bet)q^h$  &  $\rightarrow$ $q^{h'}$  &  $\rightarrow$  
$(\alp+y\bet)q^{h''}$\\[2mm]
\hline {\huge \phantom l}\hspace{-3mm}
$\nearrow$  $q^i-q^{i+1}$ \hbox{ with } $i<h'$  &  $\nearrow$  $q^i-q^{i+1}$  &     \\[2mm]
\hline {\huge \phantom l}\hspace{-3mm}
$\nearrow$  $q^i-q^{i+1}$ \hbox{ with } $i\geq h'$  &  $\rightarrow$  $q^{h'}$ &
$\nearrow$ $q^{i-h'}-q^{i+1-h'}$
\\[2mm]
\end{tabular}
\end{center}

We can also iteratively check the following points.
\begin{itemize}
\item With this construction $H_1^{(j+1)}$ and $H_2^{(j+1)}$ are indeed Motzkin suffixes.
      This is because we add a step $\nearrow$ to $H_1^{(j)}$ only in the case where $i<h'$, 
      hence $h'>0$. And we add a step $\nearrow$ to $H_2^{(j)}$ only in the case where $i\geq h'$,
      hence $h''>0$ (since $h=h'+h''>i$).
\item The paths $H_1^{(j+1)}$ and $H_2^{(j+1)}$ are respectively suffixes of an element in
      $\mathfrak{R}_{N,n}$ and $\mathfrak{B}_n$ for some $n\in\{0,\dots,N\}$,
      {\it i.e.} the weights are valid.
\item The set of rules we have given is the only possible one such that for any $j$ we have
      $H^{(j)} = \Phi(H_1^{(j)},H_2^{(j)})$.
\end{itemize}
It follows that $(H^{(N)}_1,H^{(N)}_2)\in\mathfrak{R}^*_{N,n}\times\mathfrak{B}^*_n$ for some 
$n\in\{0,\dots,N\}$, these paths are such that $\Phi(H^{(N)}_1,H^{(N)}_2)=H$, and it is 
the only pair of paths satisfying these properties. There are details to check, but we have 
a full description of $\Phi$ and of the inverse map $\Phi^{-1}$. See Figure~\ref{inv_ex} for
an example of the Motzkin suffixes we consider.

\begin{figure}[h!tp] \centering
\psset{unit=5mm}
\begin{pspicture}(-1.5,0)(5,4.5)
\psgrid[gridcolor=gray,griddots=4,subgriddiv=0,gridlabels=0](0,0)(5,4)
\psline(0,2)(1,3)(2,2)(3,2)(4,1)(5,0)
\rput(0.5,3.7) {\footnotesize \begin{sideways} $q-q^2$ \end{sideways}}
\rput(1.5,3.8) {\footnotesize \begin{sideways} $-y\alp\bet q^2$ \end{sideways}}
\rput(2.5,3.6) {\footnotesize \begin{sideways} $(\alp+y\bet)q^2$ \end{sideways}}
\rput(3.5,2.3) {\footnotesize \begin{sideways} $y$ \end{sideways}}
\rput(4.5,1.8) {\footnotesize \begin{sideways} $-y\alp\bet$ \end{sideways}}
\rput(-1.5,1.5){$H^{(j)}=$}
\end{pspicture} \psset{unit=5mm} \hspace{1.4cm}
\begin{pspicture}(-1.5,0)(5,4.5)
\psgrid[gridcolor=gray,griddots=4,subgriddiv=0,gridlabels=0](0,0)(5,4)
\psline(0,1)(1,1)(2,1)(3,1)(4,0)(5,0)
\rput(0.5,1.6) {\footnotesize \begin{sideways} $q^1$ \end{sideways}}
\rput(1.5,1.6) {\footnotesize \begin{sideways} $q^1$ \end{sideways}}
\rput(2.5,1.6) {\footnotesize \begin{sideways} $q^1$ \end{sideways}}
\rput(3.5,1.2) {\footnotesize \begin{sideways} $y$ \end{sideways}}
\rput(4.5,0.8) {\footnotesize \begin{sideways} $q^0$ \end{sideways}}
\rput(-1.5,1.5){$H_1^{(j)}=$}
\end{pspicture} \psset{unit=5mm} \hspace{1.4cm}
\begin{pspicture}(-1.5,0)(4,4.5)
\psgrid[gridcolor=gray,griddots=4,subgriddiv=0,gridlabels=0](0,0)(4,4)
\psline(0,1)(1,2)(2,1)(3,1)(4,0)
\rput(0.5,2.5) {\footnotesize \begin{sideways} $1-q$ \end{sideways}}
\rput(1.5,2.6) {\footnotesize \begin{sideways} $-y\alp\bet q$ \end{sideways}}
\rput(2.5,2.6) {\footnotesize \begin{sideways} $(\alp+y\bet)q$ \end{sideways}}
\rput(3.5,1.5) {\footnotesize \begin{sideways} $-y\alp\bet$ \end{sideways}}
\rput(-1.5,1.5){$H_2^{(j)}=$}
\end{pspicture} 
\caption{\label{inv_ex}
Example of Motzkin suffixes used to define $\Phi^{-1}$.
}
\end{figure}
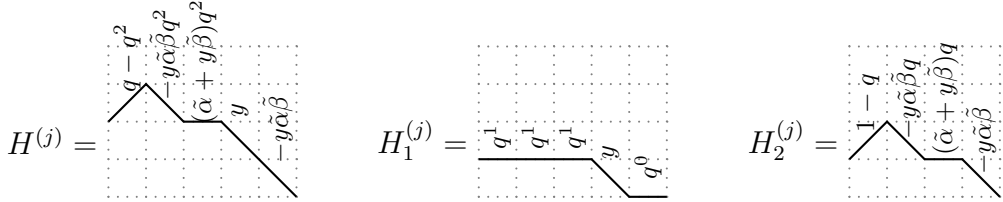

Before ending this subsection we can mention another argument to show that $\mathfrak{P}_N$ 
and the disjoint union of $\mathfrak{R}^*_{N,n}\times\mathfrak{B}^*_n$ have the same cardinal. 
Thus we could just focus on the surjectivity of the map $\Phi$ and avoid making the inverse 
map explicit. The argument uses the notion of {\it histories} \cite{XGV} and their link with 
classical combinatorial objects, as we have seen in the previous section with Laguerre histories.
As an unweighted set, $\mathfrak{P}_N$ is a set of colored Motzkin paths, with two possible colors
on the steps $\rightarrow$ or $\searrow$, and $h+1$ possible colors for a step $\nearrow$ starting
at height $h$. So $\mathfrak{P}_N$ is in bijection with colored involutions $I$ on the set 
$\{1,\dots,N\}$, such that there are two possible colors on each fixed point or each arch 
(orbit of size 2). So they are also in bijection with pairs $(I_1,I_2)$ such that for some 
$n\in\{0,\dots,N\}$:
\begin{itemize}
\item $I_1$ is an involution on $\{1,\dots,N\}$ with two possible colors on the fixed points 
      (say, blue and red), and having exactly $n$ red fixed points,
\item $I_2$ is an involution on $\{1,\dots,n\}$.
\end{itemize}
Using histories again, we see that the number of such pairs $(I_1,I_2)$ is the cardinal of
$\mathfrak{R}^*_{N,n}\times\mathfrak{B}^*_n$.

\begin{rem} Note that considering $\mathfrak{P}_N$ as an unweighted set is not equivalent
to setting the various parameters to 1. For example the two possible colors for the
horizontal steps correspond to the possible weights $1+y$ or $(\alp+y\bet)q^i$.
This bijection using colored involution is not weight-preserving but it might be possible
to have a weight-preserving version of it for some adequate statistics on the colored 
involutions.
\end{rem}

\begin{rem} \label{comp1}
The decomposition $\Phi$ is the key step in our first proof of Theorem~\ref{Z_th}. This makes
the proof quite different from the one in the case $\alpha=\beta=1$ \cite{CJPR}, even though we have 
used results from \cite{CJPR} to prove an intermediate step (namely Proposition~\ref{RNn}).
Actually it might be possible to have a direct adaptation of the case $\alpha=\beta=1$ \cite{CJPR} 
to prove Theorem~\ref{Z_th}, but it should give rise to many computational steps. In contrast
our decomposition $\Phi$ explains the formula for $Z_N$ as a sum of products.
\end{rem}

\section{A second derivation of $Z_N$ using the Matrix Ansatz}
\label{rooks}

In this section we build on our previous work \cite{MJV} to give a second
proof of \eqref{Z}. In this reference we define the operators
\begin{equation} \label{def_DE}
  \hat D = \frac{q-1}q D + \frac 1q I \qquad \hbox{and} 
  \qquad \hat E = \frac{q-1}q E + \frac 1q I,
\end{equation}
where $I$ is the identity. Some immediate consequences are
\begin{equation} \label{comrel}
 \hat D \hat E - q \hat E \hat D = \frac{1-q}{q^2}, \qquad \bra{W} \hat E = 
 -\frac \alp q\bra{W}, \qquad \hbox{and}\quad \hat D\ket{V} = -\frac \bet q \ket{V},
\end{equation}
where $\alp$ and $\bet$ are defined as in the previous sections. While the normal
ordering problem for $D$ and $E$ leads to permutation tableaux, for $\hat D$ and
$\hat E$ it leads to {\it rook placements} as was shown for example in \cite{AV}.
The combinatorics of rook placements lead to the following proposition.

\begin{prop} \label{hats} We have:
\begin{equation}
    \bra{W}(qy\hat D+q\hat E)^k\ket{V} = 
     \sum_{\substack{i+j \leq k  \\   i+j \equiv k \hbox{\scriptsize \, mod }2    }     } 
     \qbin{i+j}{i} (-\alp)^i (-y\bet)^j M_{\frac{k-i-j}2 , k}
\end{equation}
where
\begin{equation}
     M_{\ell,k} = y^{\ell} \sum_{u=0}^\ell (-1)^u q^{\binom{u+1}2} \qbin{k-2\ell+u}u
     \left( \binom{k}{\ell-u} - \binom{k}{\ell-u-1} \right).
\end{equation}
\end{prop}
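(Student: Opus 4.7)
The plan is to compute $\bra W(qy\hat D+q\hat E)^k\ket V$ by first putting the operator in normal form using the $q$-commutation relation, then applying the eigenvector relations \eqref{comrel}, and finally matching the coefficient that appears with the explicit formula for $M_{\ell,k}$ via a rook placement enumeration.

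First, I would expand $(qy\hat D+q\hat E)^k$ as a sum over words $w$ of length $k$ in the alphabet $\{\hat D,\hat E\}$, where a word contributes a weight $(qy)^{|w|_D}q^{|w|_E}$. For each word $w$, the relation $\hat D\hat E-q\hat E\hat D=(1-q)/q^2$ allows a normal-ordering procedure: whenever a $\hat D$ sits immediately to the left of a $\hat E$, it may be ``swapped'' (producing a factor $q$) or ``contracted'' (producing a factor $(1-q)/q^2$ and removing both letters). As in Varvak's combinatorial description of normal ordering for $q$-deformed Weyl algebras, summing over all ways to apply these reductions for a given $w$ amounts to enumerating rook placements on the Ferrers diagram associated with the pattern of $D$'s and $E$'s in $w$. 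Since each contraction removes two letters, normal-ordered monomials $\hat E^i\hat D^j$ appear only when $i+j\equiv k\pmod 2$ and $i+j\le k$, matching the index set of the proposition.

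Second, applying $\bra W$ and $\ket V$ via \eqref{comrel} collapses $\bra W\hat E^i\hat D^j\ket V$ to $(-\tilde\alpha/q)^i(-\tilde\beta/q)^j$. I would then collect, for fixed $(i,j)$ and $\ell=(k-i-j)/2$, all contributions from words having exactly $\ell$ contractions and producing the monomial $\hat E^i\hat D^j$. The factor that depends only on how the surviving $i$ letters $\hat E$ and $j$ letters $\hat D$ are interleaved in $w$ produces, after summing the $q$-weights of the swaps, a Gaussian binomial coefficient $\qbin{i+j}{i}$. Combining the leftover powers of $q$ from the weights $(qy)^j q^i$ with the factors $(-1/q)^i(-1/q)^j$ from the vectors yields the prefactor $(-\tilde\alpha)^i(-y\tilde\beta)^j$ (the $q$'s cancel because each of the $\ell$ contractions provided a $1/q^2$).

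The remaining factor, which depends only on $\ell$ and $k$ and collects the rook-placement contributions, must be identified with $M_{\ell,k}$. This is the main obstacle: on one hand it is a weighted sum of rook placements of $\ell$ non-attacking rooks on a staircase-like Ferrers board of size controlled by $k$; on the other hand it is the explicit alternating sum in the statement. I would handle this exactly as in \cite{MJV}, by a sign-reversing involution that cancels out unwanted configurations and leaves an enumerator of lattice-path type, of which the alternating sum $\sum_u(-1)^uq^{\binom{u+1}{2}}\qbin{k-2\ell+u}{u}\bigl(\binom{k}{\ell-u}-\binom{k}{\ell-u-1}\bigr)$ is the closed form (the ballot-like difference $\binom{k}{\ell-u}-\binom{k}{\ell-u-1}$ reflecting, as in Lemma \ref{motzbi}, a reflection-principle count of Motzkin prefixes). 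The factor $y^\ell$ in $M_{\ell,k}$ arises because each contracted pair originated from one $\hat D$ in the word, carrying a $y$ that is not absorbed into $(-y\tilde\beta)^j$. Sketching this involution gives the self-contained proof promised in the introduction, and assembling the three steps yields the stated identity.
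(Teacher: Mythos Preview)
Your approach is correct in outline and matches the first of the two proofs the paper mentions: the one coming from rook placements and the results of \cite{MJV}. The paper, however, writes out a different proof in detail. Instead of expanding into words and rook placements, it sets
\[
(yq\hat D+q\hat E)^k=\sum_{i,j\geq0} d^{(k)}_{i,j}(q\hat E)^i(qy\hat D)^j,
\]
derives the recurrence $d^{(k+1)}_{i,j}=d^{(k)}_{i,j-1}+q^jd^{(k)}_{i-1,j}+y(1-q^{j+1})d^{(k)}_{i,j+1}$ by multiplying on the right by $yq\hat D+q\hat E$, and then verifies directly that $\qbin{i+j}{i}M_{(k-i-j)/2,k}$ satisfies this recurrence, reducing everything to the Touchard-type recurrence $M_{\ell,k}+y(1-q^{k-2\ell+1})M_{\ell-1,k}=M_{\ell,k+1}$.

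The trade-off is this: the paper's recursive proof is short and self-contained once the formula is guessed, but gives no insight into why the coefficient factors as $\qbin{i+j}{i}M_{\ell,k}$. Your rook-placement approach explains the factorization structurally, but the step you pass over quickly --- that the sum over words and rook placements decouples into $\qbin{i+j}{i}$ times a quantity depending only on $\ell$ and $k$ --- is the genuinely non-trivial part. It is not immediate that the $q$-statistic on the rook placement is independent of the interleaving of the surviving letters, and the identification of the residual factor with the explicit alternating sum for $M_{\ell,k}$ is precisely the content of the results you cite from \cite{MJV}. So your proposal is sound as a sketch, but it leans on \cite{MJV} at exactly the point where the work lies; the paper's recursive argument avoids this by trading conceptual clarity for a checkable computation.
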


\begin{proof} This is a consequence of results in \cite{MJV} (see Section~2, Corollary~1,
Proposition~12). We also give here a self-contained recursive proof. We write the normal
form of $(yq\hat D+q\hat E)^k$ as:
\begin{equation} \label{rec1}
  (yq\hat D+q\hat E)^k = \sum_{i,j\geq0} d^{(k)}_{i,j} (q\hat E)^i(qy\hat D)^j.
\end{equation}
From the commutation relation in \eqref{comrel} we obtain:
\begin{equation} \label{rec2}
 (qy\hat D)^j(q\hat E) = q^j(q\hat E)(qy\hat D)^j + y(1-q^j)(qy\hat D)^{j-1}.
\end{equation}
If we multiply \eqref{rec1} by $yq\hat D+q\hat E$ to the right, using \eqref{rec2} we can get a
recurrence relation for the coefficients $d^{(k)}_{i,j}$, which reads:
\begin{equation}
d^{(k+1)}_{i,j} = d^{(k)}_{i,j-1} + q^j d^{(k)}_{i-1,j} + y(1-q^{j+1}) d^{(k)}_{i,j+1}.
\end{equation}
The initial case is that $d^{(0)}_{i,j}$ is 1 if $(i,j)=(0,0)$ and 0 otherwise.
It can be directly checked that the recurrence is solved by:
\begin{equation}
d^{(k)}_{i,j} = \qbin{i+j}i M_{\frac{k-i-j}2,k}
\end{equation}
where we understand that $M_{\frac{k-i-j}2,k}$ is $0$ when $k-i-j$ is not even. More precisely,
if we let $e^{(k)}_{i,j} = \qbin{i+j}i M_{\frac{k-i-j}2,k}$ then we have:
\begin{equation}
e^{(k)}_{i,j-1} + q^j e^{(k)}_{i-1,j} = \qbin{i+j}i M_{\frac{k-i-j+1}2,k},
\end{equation}
and also
\begin{equation}
y(1-q^{j+1}) e^{(k)}_{i,j+1} = y (1-q^{i+j+1}) \qbin{i+j}i M_{\frac{k-i-j-1}2,k}.
\end{equation}
So to prove $d^{(k)}_{i,j}=e^{(k)}_{i,j}$ it remains only to check that 
\begin{equation}
M_{\frac{k-i-j+1}2,k} + y (1-q^{i+j+1}) M_{\frac{k-i-j-1}2,k} = M_{\frac{k-i-j+1}2,k+1}.
\end{equation}
See for example \cite[Proposition~12]{MJV} (actually this recurrence already appeared more than
fifty years ago in the work of Touchard, see {\it loc. cit.} for precisions).
\end{proof}

Now we can give our second proof of Theorem~\ref{Z_th}.

\begin{proof} From \eqref{ZN} and \eqref{def_DE} we have that $(1-q)^N Z_N$ is equal to
\[   
  \bra{W}((1+y)I-qy\hat D - q \hat E )^N\ket{V} = 
  \sum_{k=0}^{N} \binom{N}{k}(1+y)^{N-k}(-1)^k \bra{W}(qy\hat D + q\hat E)^k\ket{V}.
\]
So, from Proposition~\ref{hats} we have:
\[  (1-q)^N Z_N = \sum_{k=0}^N 
    \sum_{\substack{i+j \leq k  \\   i+j \equiv k \hbox{\scriptsize \, mod }2    }     } 
    \qbin{i+j}i \alp^i(y\bet)^j\binom Nk (1+y)^{N-k}
    M_{\frac{k-i-j}2,k}
\]
(the $(-1)^k$ cancels with a $(-1)^{i+j}$). Setting $n=i+j$, we have:
\begin{eqnarray*}
  (1-q)^N Z_N   
  & = & \sum_{n=0}^N  B_n(\alp,\bet,y,q)
          \sum_{\substack{ n\leq k\leq N \\ k \equiv n \hbox{\scriptsize \, mod }2   } }
       \binom Nk (1+y)^{N-k}  M_{\frac{k-n}2,k}.
\end{eqnarray*}
So it remains only to show that the latter sum is $R_{N,n}(y,q)$. If we change the indices
so that $k$ becomes $n+2k$, this sum is:
\[
  \sum_{k=0}^{\lfloor \frac{N-n}2 \rfloor} \tbinom{N}{n+2k}(1+y)^{N-n-2k} y^{k}
  \sum_{i=0}^k (-1)^i q^{\binom{i+1}2} \qbin{n+i}i
  \left(  \tbinom{n+2k}{k-i} - \tbinom{n+2k}{k-i-1}     \right)
\]
\[
  = \sum_{i=0}^{\lfloor \frac{N-n}2 \rfloor} (-y)^i q^{\binom{i+1}2} \qbin{n+i}i
   \sum_{k=i}^{\lfloor \frac{N-n}{2} \rfloor } y^{k-i}\tbinom N{n+2k} (1+y)^{N-n-2k}
   \left( \tbinom{n+2k}{k-i} - \tbinom{n+2k}{k-i-1}   \right).
\]
We can simplify the latter sum by Lemma~\ref{idbinl} below and obtain $R_{N,n}(y,q)$.
This completes the proof.
\end{proof}

\begin{lem} \label{idbinl}For any $N,n,i\geq0$ we have:
\begin{equation} \label{idbin}
\begin{split}
   \sum_{k=i}^{\lfloor \frac{N-n}{2} \rfloor } y^{k-i} \binom N{n+2k} (1+y)^{N-n-2k}
     & \left( \tbinom{n+2k}{k-i} - \tbinom{n+2k}{k-i-1}   \right)  \\
     & =  
     \sum_{j=0}^{N-n-2i}y^j\left( \tbinom Nj \tbinom N{n+2i+j}-
     \tbinom N{j-1} \tbinom N{n+2i+j+1}  \right).
\end{split}
\end{equation}
\end{lem}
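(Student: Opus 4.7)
The plan is to prove the identity combinatorially by showing that both sides compute the same generating function of Motzkin prefixes, namely the one appearing in Lemma \ref{motzbi}.

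First I would reindex the left-hand side by substituting $m=k-i$ and setting $n'=n+2i$. After this substitution the sum becomes
\[
\sum_{m=0}^{\lfloor (N-n')/2\rfloor} y^m \binom{N}{n'+2m}(1+y)^{N-n'-2m}\left(\binom{n'+2m}{m}-\binom{n'+2m}{m-1}\right),
\]
and the right-hand side becomes exactly
\[
\sum_{j=0}^{N-n'} y^j\left(\binom{N}{j}\binom{N}{n'+j}-\binom{N}{j-1}\binom{N}{n'+j+1}\right).
\]
By Lemma \ref{motzbi} (applied with $n'$ playing the role of $n+2i$), the right-hand side equals the generating function of Motzkin prefixes of length $N$ ending at height $n'$, where each $\rightarrow$ carries weight $1+y$ and each $\searrow$ carries weight $y$.

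Next I would verify directly that the reindexed left-hand side is the same generating function, by decomposing the Motzkin prefixes according to the number $m$ of descending steps. A Motzkin prefix of length $N$ ending at height $n'$ with $m$ descending steps has $n'+m$ ascending steps and $N-n'-2m$ horizontal steps. Choosing which of the $N$ positions host the non-horizontal steps contributes the factor $\binom{N}{n'+2m}$; the $m$ descending steps contribute $y^m$ and the horizontal steps contribute $(1+y)^{N-n'-2m}$; finally, arranging $n'+m$ ascending and $m$ descending steps so that the partial-sum constraint (no descent below height 0) is satisfied and the final height is $n'$ is a standard ballot problem, counted by $\binom{n'+2m}{m}-\binom{n'+2m}{m-1}$ via the reflection principle. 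Summing over $m$ recovers the left-hand side, completing the proof.

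The only nontrivial point is the invocation of the reflection principle, but this is classical and I would just cite it (or give a one-line justification). There are no real obstacles here since Lemma \ref{motzbi} does all the heavy lifting: the identity is essentially the statement that one may enumerate Motzkin prefixes by grouping them according to their number of descending steps. A purely algebraic proof would be possible by extracting the coefficient of $y^j$ on both sides and reducing to a Vandermonde-Chu type convolution, but the bijective argument is shorter and more transparent.
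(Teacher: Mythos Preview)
Your proof is correct and follows essentially the same approach as the paper's: both sides count weighted Motzkin prefixes of length $N$ ending at height $n+2i$, the right-hand side via Lemma~\ref{motzbi} and the left-hand side by first choosing the positions of the horizontal steps and then counting the remaining Dyck prefix (your ballot count $\tbinom{n'+2m}{m}-\tbinom{n'+2m}{m-1}$). The paper states this more tersely without the reindexing $m=k-i$, $n'=n+2i$, but the argument is identical.
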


\begin{proof} As said in Lemma~\ref{motzbi}, the right-hand side of \eqref{idbin} is
the number of Motzkin prefixes of length $N$, final height $n+2i$, and a weight $1+y$ on each 
step $\rightarrow$ and $y$ on each step $\searrow$. Similarly, $y^{k-i}(\tbinom{n+2k}{k-i} - 
\tbinom{n+2k}{k-i-1})$ is the number of Dyck prefixes of length $n+2k$ and final height $n+2i$, 
with a weight $y$ on each step $\searrow$. From these two combinatorial interpretations it is
straightforward to obtain a bijective proof of \eqref{idbin}. Each Motzkin prefix is built from
a shorter Dyck prefix with the same final height, by choosing where are the $N-n-2k$ steps
$\rightarrow$.
\end{proof}

\begin{rem} \label{comp2}
All the ideas in this proof were present in \cite{MJV} where we obtained the case $\alpha=\beta=1$. 
The particular case was actually more difficult to prove because several $q$-binomial and binomial
simplifications were needed. In particular, it is natural to ask if the formula in \eqref{cas1} 
for $Z_N|_{\alpha=\beta=1}$ can be recovered from the general expression in Theorem~\ref{Z_th},
and the (affirmative) answer is essentially given in \cite{MJV} (see also Subsection \ref{simpli} 
below for a very similar simplification).
\end{rem}

\section{Moments of Al-Salam-Chihara polynomials}
\label{ALSC}

The link between the PASEP and Al-Salam-Chihara orthogonal polynomials
$Q_n(x;a,b\mid q)$ was described in \cite{TS}. These polynomials, denoted by $Q_n(x)$ 
when we don't need to specify the other parameters, are defined by the recurrence \cite{KoSw98}:
\begin{equation} \label{recASC}
 2xQ_n(x) = Q_{n+1}(x) + (a+b)q^nQ_n(x) + (1-q^n)(1-abq^{n-1})Q_{n-1}(x)
\end{equation}
together with $Q_{-1}(x)=0$ and $Q_0(x)=1$. They are the most general orthogonal
sequence that is a convolution of two orthogonal sequences \cite{ASCh}. They are obtained 
from Askey-Wilson polynomials $p_n(x;a,b,c,d\mid q)$ by setting $c=d=0$ \cite{KoSw98}.

\subsection{Closed formulas for the moments}
Let $\tilde Q_n(x) = Q_n(\frac x2-1; \alp , \bet \mid q)$, where $\alp = (1-q)\frac1\alpha-1 $ 
and $\bet = (1-q)\frac1\beta-1$ as before. From now on we suppose that $a=\alp$ and $b=\bet$ (note
that $a$ and $b$ are generic if $\alpha$ and $\beta$ are). The recurrence for these shifted 
polynomials is:
\begin{equation}
 x\tilde Q_n(x) = \tilde Q_{n+1}(x) + (2+\alp q^n+ \bet q^n)\tilde Q_n(x) 
                  + (1-q^n)(1-\alp\bet q^{n-1})\tilde Q_{n-1}(x).
\end{equation}
From Proposition~\ref{mu_ortho}, the $N$th moment of the orthogonal sequence
$\{ \tilde Q_n(x) \}_{n\geq 0}$ is the specialization of $(1-q)^NZ_N$ at $y=1$. The $N$th moment $\mu_N$ 
of the Al-Salam-Chihara polynomials can now be obtained via the relation:
\[
\mu_N = \sum_{k=0}^N \binom Nk (-1)^{N-k} 2^{-k} (1-q)^k Z_k|_{y=1}.
\]
Actually the methods of Section~\ref{paths} also give a direct proof of the following.

\begin{thm} The $N$th moment of the Al-Salam-Chihara polynomials is:
\begin{equation} \label{asc_mom}
  \begin{split}
  \mu_N = \frac{1}{2^N}
  \sum_{\substack{ 0\leq n \leq N \\ n\equiv N \hbox{\scriptsize mod }2  }}
  \left(  \sum_{j=0}^{\frac{N-n}2}  (-1)^j q^{\binom{j+1}2}  \tqbin{ n+j }{j}
        \left( \binom{N}{\frac{N-n}2-j} - \binom{N}{\frac{N-n}2-j-1}  \right)      
  \right) \\
  \times \left( \sum_{k=0}^n \qbin nk a^k b^{n-k} \right).
  \end{split}
\end{equation}
\end{thm}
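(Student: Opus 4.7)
My plan is to mimic the combinatorial derivation of Theorem \ref{Z_th} carried out in Section \ref{paths}. First, rewrite the recurrence \eqref{recASC} as
\[
   xQ_n(x) = \tfrac12 Q_{n+1}(x) + \tfrac{(a+b)q^n}{2}Q_n(x) + \tfrac{(1-q^n)(1-abq^{n-1})}{2} Q_{n-1}(x),
\]
and rescale $P_n = 2^{-n}Q_n$ to put it into the standard three-term form required by Proposition \ref{mu_ortho}. Since a scalar rescaling preserves the orthogonality measure, the moments of $\{P_n\}$ coincide with $\mu_N$, and distributing one factor of $\tfrac12$ per step gives
\[
   \mu_N = \frac{1}{2^N} \sum_{M} w(M),
\]
where $M$ ranges over Motzkin paths of length $N$ with weights $1-q^{h+1}$ on $\nearrow$, $(a+b)q^h$ on $\rightarrow$, and $1-abq^{h-1}$ on $\searrow$, each at starting height $h$.

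Next, I refine the weights exactly as in Section \ref{paths}: split $1-q^{h+1} = \sum_{i=0}^h(q^i-q^{i+1})$ on up steps, $(a+b)q^h = aq^h + bq^h$ on horizontal steps, and $1-abq^{h-1} = 1 + (-abq^{h-1})$ on down steps. Let $\mathfrak{P}''_N$ denote the resulting refined set. I then transplant the decomposition $\Phi$ of Proposition \ref{decomp} to $\mathfrak{P}''_N$ with two small adjustments to its case table: the row that inputs ``$\rightarrow\,1+y$'' simply never occurs (no such weight exists here), and the single row inputting ``$\rightarrow(\tilde\alpha+y\tilde\beta)q^h$'' is replaced by two rows, one for ``$\rightarrow aq^h$'' and one for ``$\rightarrow bq^h$'', each mapped to ``$\rightarrow q^{h'}$'' in $H_1$ together with ``$\rightarrow aq^{h''}$'' or ``$\rightarrow bq^{h''}$'' in $H_2$, respectively. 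All the other rules (and their inverses) extend verbatim. The output is a weight-preserving bijection between $\mathfrak{P}''_N$ and $\bigsqcup_n \widetilde{\mathfrak{R}}_{N,n} \times \widetilde{\mathfrak{B}}_n$, where $\widetilde{\mathfrak{R}}_{N,n}$ is the variant of $\mathfrak{R}^*_{N,n}$ at $y=1$ with the $(1+y)$-weighted horizontal option \emph{deleted} (so horizontals carry only weight $q^h$ and down steps carry $1$), while $\widetilde{\mathfrak{B}}_n$ is $\mathfrak{B}^*_n$ at $y=1$ with $(\tilde\alpha,\tilde\beta)$ renamed to $(a,b)$.

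Finally, by Proposition \ref{Bn} the sum over $\widetilde{\mathfrak{B}}_n$ is $B_n(a,b,1,q) = \sum_{k=0}^n \qbin{n}{k} a^k b^{n-k}$, which is the right-hand factor of \eqref{asc_mom}. To evaluate $\widetilde{\mathfrak{R}}_{N,n}$, I rerun the three-step argument of Lemmas \ref{decomp2}, \ref{motzbi}, and \ref{core}: because the $(1+y)$-horizontal is absent, the ``prefix'' $P$ of Lemma \ref{decomp2} has no horizontal steps and is just a Dyck prefix of length $N$ and final height $n+2i$. The replacement for Lemma \ref{motzbi} is therefore the classical ballot count $\binom{N}{(N-n)/2-i} - \binom{N}{(N-n)/2-i-1}$, which vanishes unless $n\equiv N\pmod 2$; Lemma \ref{core} still provides the factor $(-1)^iq^{\binom{i+1}{2}}\qbin{n+i}{i}$ unchanged. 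Summing over $n$ and $i$ and prepending the $2^{-N}$ factor yields \eqref{asc_mom} exactly. The main obstacle is not conceptual but bookkeeping: one must carefully check that the adapted decomposition table still defines an algorithmic inverse, in particular that the case ``$i\geq h'$'' on $\nearrow\,q^i-q^{i+1}$ still produces a valid $\rightarrow q^{h'}$ step in $H_1$ without violating the constraint that horizontals in $\widetilde{\mathfrak{R}}_{N,n}$ carry only weight $q^{h'}$, which it does by construction.
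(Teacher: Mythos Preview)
Your proposal is correct and follows essentially the same route as the paper: both restrict the lattice-path decomposition $\Phi$ of Section~\ref{paths} to the subset of paths with no horizontal step of weight $1+y$, observe that the resulting prefix $P$ in Lemma~\ref{decomp2} becomes a Dyck prefix counted by the ballot formula, and invoke Lemma~\ref{core} and Proposition~\ref{Bn} unchanged. The only cosmetic difference is that the paper simply notes that the already-defined $\Phi$ restricts to the subset $\mathfrak{P}'_N\subset\mathfrak{P}_N$ (so no table modification is needed at all), whereas you additionally split the horizontal weight $(a+b)q^h$ into two colors $aq^h$ and $bq^h$; this refinement is harmless but unnecessary, since Proposition~\ref{Bn} already handles the combined weight.
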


\begin{proof} The general idea is to adapt the proof of Theorem~\ref{Z_th} in Section~\ref{paths}.
Let $\mathfrak{P}'_N \subset \mathfrak{P}_N$ be the subset of paths which contain no step 
$\rightarrow$ with weight $1+y$. By Proposition~\ref{mu_ortho}, the sum of weights of elements
in $\mathfrak{P}'_N$ specialized at $y=1$, gives the $N$th moment of the sequence 
$\{Q_n(\frac x2)\}_{n\geq 0}$. This can be seen by comparing the weights in the Motzkin paths 
and the recurrence \eqref{recASC}. But the $N$th moment of this sequence is also $2^N\mu_N$.

From the definition of the bijection $\Phi$ in Section~\ref{paths}, we see that $\Phi(H_1,H_2)$
has no step $\rightarrow$ with weight $1+y$ if and only if $H_1$ has the same property. So
from Proposition~\ref{decomp} the bijection $\Phi^{-1}$ gives a weight-preserving bijection
between $\mathfrak{P}'_N$ and the disjoint union of $\mathfrak{R}'_{N,n} \times \mathfrak{B}^*_n$ 
over $n\in\{0,\dots,N\}$, where $\mathfrak{R}'_{N,n} \subset \mathfrak{R}^*_{N,n}$ is the subset
of paths which contain no horizontal step with weight $1+y$. Note that $\mathfrak{R}'_{N,n}$ 
is empty when $n$ and $N$ don't have the same parity, because now $n$ has to be the number of 
steps $\rightarrow$ in a Motzkin path of length $N$. In particular we can restrict the sum 
over $n$ to the case $n\equiv N$ mod $2$.

At this point it remains only to adapt the proof of Proposition~\ref{RNn} to compute the sum of
weights of elements in $\mathfrak{R}'_{N,n}$, and obtain the sum over $j$ in \eqref{asc_mom}. As 
in the previous case we use Lemma~\ref{decomp2} and Lemma~\ref{core}. But in this case instead of
Motzkin prefixes we get Dyck prefixes, so to conclude we need to know that 
$\tbinom{N}{(N-n)/2-j} - \tbinom{N}{(N-n)/2-j-1}$ is the number of Dyck prefixes of length $N$ and 
final height $n+2i$. The rest of the proof is similar.
\end{proof}

We have to mention that there are analytical methods to obtain the moments 
$\mu_N$ of these polynomials. A nice formula for the Askey-Wilson moments was given by Stanton
\cite{DS}, as a consequence of joint results with Ismail \cite[equation (1.16)]{IS}. As a
particular case they have the Al-Salam-Chihara moments:
\begin{equation} \label{asc_mom2}
  \mu_N = \frac1{2^N} \sum_{k=0}^N (ab;q)_kq^k \sum_{j=0}^k \frac{ q^{-j^2} a^{-2j}
   (q^{j} a + q^{-j}a^{-1})^N}{ (q,a^{-2}q^{-2j+1};q)_{j}(q,a^2q^{1+2j};q)_{k-j}},
\end{equation}
where we use the $q$-Pochhammer symbol.
The latter formula has no apparent symmetry in $a$ and $b$ and has denominators, but Stanton \cite{DS}
gave evidence that \eqref{asc_mom2} can be simplified down to \eqref{asc_mom} using binomial, 
$q$-binomial, and $q$-Vandermonde summation theorems. Moreover \eqref{asc_mom2} is equivalent to 
a formula for rescaled polynomials given in \cite{KSZ08} (Section~4, Theorem~1 and equation (29)).

\subsection{Some particular cases of Al-Salam-Chihara moments}\label{simpli}
When $a=b=0$ in \eqref{asc_mom} we immediately recover the known result for the continuous 
$q$-Hermite moments. This is 0 if $N$ is odd, and the Touchard-Riordan formula if $N$ is 
even. Other interesting cases are the $q$-secant numbers $E_{2n}(q)$ and $q$-tangent numbers
$E_{2n+1}(q)$, defined in \cite{HZR} by continued fraction expansions of the ordinary generating 
functions:
\begin{equation}
\sum_{n\geq0} E_{2n}(q)t^n=
\cfrac{1}{1-\cfrac{[1]_q^2t}{1-\cfrac{[2]_q^2t}{1-\cfrac{[3]_q^2t}{\ddots}}}}
\displaystyle
\quad\hbox{and}\quad
\sum_{n\geq0} E_{2n+1}(q)t^n=
\scriptstyle
\cfrac{1}{1-\cfrac{[1]_q[2]_qt}{1-\cfrac{[2]_q[3]_qt}{1-\cfrac{[3]_q[4]_qt}{\ddots}}}}.
\end{equation}
The exponential generating function of the numbers $E_n(1)$ is the function $\mathrm{tan}(x)
+\mathrm{sec}(x)$. We have the combinatorial interpretation \cite{HZR,MJV2}:
\begin{equation}
E_n(q) = \sum_{\sigma\in\mathfrak{A}_n} q^{\hbox{\scriptsize\mot}(\sigma)},
\end{equation}
where $\mathfrak{A}_n\subset\mathfrak{S}_n$ is the set of alternating permutations, {\it i.e.}
permutations $\sigma$ such that $\sigma(1)>\sigma(2)<\sigma(3)>\dots$. The continued fractions 
show that these numbers are particular cases of Al-Salam-Chihara moments:
\begin{equation}
E_{2n}(q)  =(\tfrac2{1-q})^{2n}\mu_{2n}|_{a=-b=i\sqrt{q}}, \qquad\hbox{and}\quad 
E_{2n+1}(q)=(\tfrac2{1-q})^{2n}\mu_{2n}|_{a=-b=iq}
\end{equation}
(where $i^2=-1$). From \eqref{asc_mom} and a $q$-binomial identity it is possible to obtain
the closed formulas for $E_{2n}(q)$ and $E_{2n+1}(q)$ that were given in \cite{MJV2}, in a similar
manner that \eqref{Z} can be simplified into \eqref{cas1} when $\alpha=\beta=1$. Indeed, from
\eqref{asc_mom} we can rewrite:
\begin{equation}
2^{2n}\mu_{2n} = \sum_{m=0}^n \big(\tbinom{2n}{n-m} - \tbinom{2n}{n-m-1}\big)
\sum_{j,k\geq0} (-1)^j q^{\binom{j+1}2} \tqbin{2m-j}{j}\tqbin{2m-2j}{k} 
 \left(\tfrac ba\right)^k a^{2m-2j}.
\end{equation}
This latter sum over $j$ and $k$ is also
\begin{eqnarray}
   \sum_{j,k\geq0} (-1)^j q^{\binom{j+1}2}
     \tqbin{2m-j}{j+k}\tqbin{j+k}{j} \left(\tfrac ba\right)^k  a^{2m-2j}   
 = \sum_{\ell\geq j\geq0} (-1)^j q^{\binom{j+1}2}
     \tqbin{2m-j}{\ell}\tqbin{\ell}{j} \left(\tfrac ba\right)^{\ell-j}  a^{2m-2j}. 
\end{eqnarray}
The sum over $j$ can be simplified in the case $a=-b=i\sqrt q $, or $a=-b=iq$, using the 
$q$-binomial identities already used in \cite{MJV} (see Lemma~2):
\begin{equation}
\sum_{j\geq0} (-1)^j q^{\binom j 2}\qbin{2m-j}{\ell}\qbin{\ell}{j} = q^{\ell(2m-\ell)},
\end{equation}
and
\begin{equation}
\sum_{j\geq0} (-1)^j q^{\binom {j-1} 2}\qbin{2m-j}{\ell}\qbin{\ell}{j} = 
\tfrac{  q^{(\ell+1)(2m-\ell)} - q^{\ell(2m-\ell)} +q^{\ell(2m-\ell+1)} - 
  q^{(\ell+1)(2m-\ell+1)} }{ q^{2m-1}(1-q)}.
\end{equation}
Omitting details, this gives a new proof of the Touchard-Riordan-like formulas \cite{MJV2}:
\begin{equation} \label{En}
E_{2n}(q) = \frac 1{(1-q)^{2n}} \sum_{m=0}^n \left( \tbinom{2n}{n-m}-
\tbinom{2n}{n-m-1} \right)
\sum_{\ell=0}^{2m} (-1)^{\ell+m} q^{\ell(2m-\ell)+m}
\end{equation}
and
\begin{equation} \label{En2}
E_{2n+1}(q) = \frac 1{(1-q)^{2n+1}} \sum_{m=0}^n \left( \tbinom{2n+1}{n-m}-
\tbinom{2n+1}{n-m-1} \right) \sum_{\ell=0}^{2m+1} (-1)^{\ell+m} q^{\ell(2m+2-\ell)}.
\end{equation}

\section{Some classical integer sequences related to $\bar Z_N$}
\label{num}

It should be clear from the interpretation given in \eqref{main_int} that the polynomial 
$\bar Z_N$ contains quite a lot a of combinatorial information. When $\alpha=\beta=1$, 
the coefficients of $y^k$ in $\bar Z_n$ are the $q$-Eulerian numbers introduced by 
Williams \cite{LW}:
\begin{equation} \label{ZEhat}
  \bar Z_N  |_{\alpha=\beta=1}  = \sum_{k=0}^N y^k \hat E_{k+1,N+1}(q),
\end{equation}
where $\hat E_{k,n}(q)$ is defined in \cite[Section~6]{LW}. It was proved by Williams, that
$\hat E_{k,n}(q)$ is equal to the Eulerian number $A_{n,k}$ when $q=1$, to the binomial
coefficient $\binom{n-1}{k-1}$ when $q=-1$, and to the Narayana number 
$N_{n,k}=\frac1n\binom nk \binom n{k-1}$ when $q=0$. With the other parameters $\alpha$ and $\beta$,
there are other interesting results.

\subsection{Stirling numbers}

Carlitz $q$-analog of the Stirling numbers of the second kind, denoted by $S_2[n,k]$,
are defined when $1\leq k \leq n$ by the recurrence \cite{Car48}:
\begin{equation} \label{rec_stir}
    S_2[n,k]=S_2[n-1,k-1] + [k]_q S_2[n-1,k], \quad S_2[n,k]=1 \hbox{ if } k=1 \hbox{ or } k=n.
\end{equation}

\begin{prop}
If $\alpha=1$, the coefficient of $\beta^ky^{k}$ in $\bar Z_N$ is $S_2[N+1,k+1]$.
\end{prop}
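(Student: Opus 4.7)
The plan is to combine Theorem~\ref{main} with the defining recurrence \eqref{rec_stir}. By Theorem~\ref{main} specialised at $\alpha=1$, the coefficient of $\beta^k y^k$ in $\bar Z_N$ equals
\[
   f(N+1,k+1) \;:=\; \sum_{\substack{\sigma\in\mathfrak{S}_{N+1}\\ t(\sigma)=\asc(\sigma)=k+1}} q^{\hbox{\scriptsize \mot}(\sigma)},
\]
so it suffices to prove $f(n,k)=S_2[n,k]$ for all $1\le k\le n$. The boundaries $f(n,1)=f(n,n)=1$ are immediate: $\asc(\sigma)=1$ forces $\sigma=n(n-1)\cdots 1$, which has $t(\sigma)=1$ and no 31-2 pattern, and $\asc(\sigma)=n$ forces $\sigma=12\cdots n$, with $t(\sigma)=n$ and again no 31-2 pattern. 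I would verify the Carlitz recurrence $f(n,k)=f(n-1,k-1)+[k]_q f(n-1,k)$ by induction on $n$.

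To carry out the recurrence verification I would first transport $f(n,k)$ to a sum over weighted Motzkin paths via the Fran\c{c}on-Viennot bijection $\Psi_{FV}$. By Lemma~\ref{FV_rlmin}, $t(\sigma)$ equals the number of type 1 steps of $\Psi_{FV}(\sigma)$, and the total $y$-weight of the image is $y^{\asc(\sigma)}$, so the condition $t(\sigma)=\asc(\sigma)=k$ amounts to demanding exactly $k$ type 1 steps with every $y$-weighted step being of type 1. Summing out the free choices on the remaining horizontal and descending steps, $f(n,k)$ becomes the sum of weights of Motzkin paths of length $n$ carrying $k$ marked steps, with weight $q^h$ on $\nearrow$ at height $h$, either $q^h$ (marked) or $[h]_q$ (unmarked) on $\rightarrow$ at height $h$, and weight $[h]_q$ on $\searrow$ at height $h$. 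Decomposing such a path by its last step yields $f(n,k)=f(n-1,k-1)+H(n-1,k)$, where $H(n,k)$ denotes the analogous generating function for paths of length $n$ ending at height~$1$ (the case in which the last step is a $\searrow$ from height $1$, carrying weight $[1]_q=1$).

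The main obstacle is the identity $H(n-1,k)=[k]_q\,f(n-1,k)$, which together with the preceding decomposition produces the Carlitz recurrence. I would prove it by a weight-preserving bijection sending each half-path to a pair $(P,j)$, where $P$ is a Motzkin path of length $n-1$ with $k$ marked steps and $j\in\{1,\dots,k\}$: the idea is to ``close'' a half-path into a Motzkin path by converting into a marked horizontal step the last $\nearrow$ step that takes the path out of height $0$, with the index $j$ encoding the rank of that step among the marked steps of $P$ and the factor $q^{j-1}$ accounting for the height shift induced on the subsequent portion of the path. As $j$ ranges over $\{1,\dots,k\}$ one gets the weights $1,q,\dots,q^{k-1}$ summing to $[k]_q$. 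An alternative self-contained route is to compute the $J$-fraction of $\sum_{N,K} f(N,K) z^N w^K$ via Proposition~\ref{mu_ortho} and match it with the closed form $\sum_{K\ge 0}(wz)^K\big/\prod_{j=1}^{K}(1-[j]_q z)$, which solves the recurrence \eqref{rec_stir} directly and thus encodes the generating function of the $S_2[n,k]$. With the lemma in hand, induction on $n$ delivers $f(n,k)=S_2[n,k]$ and completes the proof.
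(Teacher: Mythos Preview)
Your route is genuinely different from the paper's: the paper reads the coefficient of $\beta^k y^k$ from the permutation-tableau interpretation~\eqref{ZTP}, recognises the relevant tableaux as exactly those with no restricted rows, and checks the Carlitz recurrence~\eqref{rec_stir} directly on such tableaux (they are Leroux's $0$--$1$ tableaux). You instead start from Theorem~\ref{main} and the Fran\c{c}on--Viennot bijection, which is a legitimate idea and your translation to the Motzkin-path model with weights $q^h$ on marked steps and $[h]_q$ on unmarked ones is correct.

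The reduction down to the identity $H(n-1,k)=[k]_q\,f(n-1,k)$ is fine. The gap is in your proposed bijection for that identity. When you turn the last height-$0$ up-step into a marked level step and shift the suffix down by one, the marked steps do each lose exactly one power of $q$ (their weight is $q^h$). But the \emph{unmarked} level and down steps carry weight $[h]_q$, and the shift would change this to $[h-1]_q$, which is not a $q$-rescaling of $[h]_q$: concretely, a $\searrow$ from height $2$ carrying the choice $q^1$ has no image as a $\searrow$ from height~$1$. So the map as described is not well-defined on the refined weighted paths, and the factor $q^{j-1}$ does not repair the discrepancy. The phrase ``the index $j$ encoding the rank of that step among the marked steps of $P$'' is also problematic: once you have specified which $\nearrow$ becomes the new marked level step, its rank among the marked steps of $P$ is determined, not a free parameter in $\{1,\dots,k\}$.

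Your alternative via the $J$-fraction does work and is the cleanest way to close the argument. With $b_h=wq^h+[h]_q$ and $\lambda_h=wq^{h-1}[h]_q$, the standard contraction formula shows that the $J$-fraction equals the Stieltjes fraction with successive partial numerators $w,\ [1]_q,\ wq,\ [2]_q,\ wq^2,\ [3]_q,\dots$; expanding that $S$-fraction floor by floor yields exactly $\sum_{K\ge 0}(wz)^K/\prod_{j=1}^K(1-[j]_qz)$, hence $f(n,k)=S_2[n,k]$. Compared with the paper's short tableau argument, your approach is longer, but it has the merit of exhibiting directly which permutations in the model of Theorem~\ref{main} are being counted: those whose ascent positions coincide with their right-to-left minima, i.e.\ whose maximal decreasing runs are the blocks of a set partition of $[n]$.
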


\begin{proof}
This follows from the interpretation \eqref{ZTP} in terms of permutation tableaux (see Definition
\ref{def_PT}). Indeed, the coefficient of $\beta^ky^k$ in $\bar Z_N$ counts permutation tableaux 
of size $N+1$, with $k+1$ rows, and $k+1$ unrestricted rows. In a permutation tableau with no 
restricted row, each column contains a sequence of 0's followed by a sequence of 1's. Such permutation
tableaux follow the recurrence \eqref{rec_stir} where $n$ is the size and $k$ is the number of rows. 
Indeed, if the bottom row is of size 0 we can remove it and this gives the term $S_2[n-1,k-1]$. 
Otherwise the first column is of size $k$, this gives the term $[k]_q S_2[n-1,k]$ because the factor 
$[k]_q$ accounts for the possibilities of the first column, the factor $S_2[n-1,k]$ accounts for what 
remains after removing the first column.
\end{proof}

The proof only relies on simple facts about permutation tableaux, and with no doubts it was 
previously noticed that $S_2[n,k]$ appears when we count permutation tableaux without restricted 
rows. Actually permutation tableaux with no restricted rows are equivalent to the 0-1 tableaux 
introduced by Leroux \cite{Ler90} as a combinatorial interpretation of $S_2[n,k]$.

From \eqref{Z}, it is possible to obtain a formula for $S_2[n,k]$. First, observe that the 
coefficient of $y^k$ in $\bar Z_N$ has degree $k$ in $\beta$. Hence, from the previous proposition:
\begin{equation} \label{stirZ}
  \sum_{k=0}^{N} a^k S_2[N+1,k+1] = \lim_{y\to 0} \bar Z_N(1,\tfrac a y,y,q).
\end{equation}
We have $R_{N,n}(0,q)=\binom Nn$. When $\alpha=1$ and $\beta=\frac ya$, we have $\alp=-q$ and 
$y\bet=(1-q)a+y$. So from \eqref{Z} and \eqref{stirZ} it is straightforward to obtain:
\begin{equation} \label{carl1}
  S_2[N+1,k+1] = \frac 1{(1-q)^{N-k}} \sum_{j=0}^{N-k} (-q)^j \binom{N}{k+j}\qbin{k+j}{j}.
\end{equation}
Note that this differs from the expression originally given by Carlitz \cite{Car48}:
\begin{equation} \label{carl2}
  S_2[N,k] = \frac 1{(1-q)^{N-k}} \sum_{j=0}^{N-k} (-1)^j \binom{N}{k+j}\qbin{k+j}{j},
\end{equation}
but it is elementary to check that \eqref{carl1} and \eqref{carl2} are equivalent, 
using the two-term recurrence relations for binomial and $q$-binomial coefficients.

When $y=\alpha=1$, the coefficient of $\beta^k$ in $\bar Z_N$ is a $q$-analog of the Stirling 
number of the first kind $S_1(N+1,k+1)$. It is such that $q$ counts the number of patterns 31-2 
in permutations of size $N+1$ and with $k+1$ right-to-left minima. Knowing the symmetry \eqref{Z_sym},
we could also say that it is such that $q$ counts the number of patterns 31-2 
in permutations of size $N+1$ and with $k+1$ right-to-left maxima.
The combinatorial way to see the symmetry \eqref{Z_sym} is the transposition of permutation tableaux
\cite{CW3}, so at the moment it is quite indirect to see that the two interpretations agree since we need 
all the bijections from Section~\ref{bij}. We have no knowledge of previous work concerning these
$q$-Stirling numbers of the first kind.

\subsection{Fine numbers}

The sequence of Fine numbers shares many properties with the Catalan numbers, we refer to
\cite{DSh} for history and facts about them. We will show that a natural symmetric refinement 
of them appears as a specialization of $\bar Z_N$.

A {\it peak} of a Dyck path is a factor 
$\nearrow\searrow$, we denote by $\pk(P)$ the number of peaks of a path $P$. A Fine path is a Dyck path 
$D$ such that there is no factorization $D=D_1\nearrow\searrow D_2$ where $D_1$ and $D_2$ are Dyck 
paths. The Fine number $F_n$ is the number of Fine path of length $2n$, and more generally the 
polynomial $F_n(y)$ is $\sum y^{\pk(P)}$ where the sum is over Fine paths $P$ of length $2n$.
These polynomials were considered in \cite{DSh} via their generating function.

An interesting property is that $F_n(y)$ is self-reciprocal, {\it i.e.}  
$F_n(y) = y^{n}F_n(\frac 1y)$ (a simple proof of this will appear below). This is 
reminiscent of the Dyck paths: the number of Dyck paths of length $2n$ with $k$ peaks is the 
Narayana number $N_{k,n}$ and we have $N_{k,n}=N_{n+1-k,n}$. The first values are:
\begin{equation}\begin{split}
  F_1(y) = 0, \quad F_2(y) = y, \quad F_3(y) =y^2+y , \quad F_4(y) = y^3+4y^2+y, 
\\
   F_5(y) = y^4+8y^3+8y^2+y, \quad F_6(y) = y^5+13y^4+29y^3+13y^2+y.
\end{split}\end{equation}

\begin{prop} 
When $\frac1\alpha=-y$, $q=0$, and $\beta=1$, we have $\bar Z_N= F_N(y)$.
\end{prop}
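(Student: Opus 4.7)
The plan is to specialize the weighted-Motzkin-path formula \eqref{Z_w} and identify the result with the Fine-polynomial generating function. Under the substitution $q=0$, $\beta=1$, $1/\alpha=-y$, we have $\alp=(1-q)/\alpha-1=-y-1$ and $\bet=0$, so the weights in \eqref{Z_w} simplify to: weight $1$ on each $\nearrow$, weight $y$ on each $\searrow$, and weight $(1+y)+(\alp+y\bet)q^h$ on a step $\rightarrow$ at height $h$, which equals $0$ when $h=0$ and $1+y$ when $h\geq1$. Thus $\bar Z_N|_{\ldots}$ equals the sum of weights over Motzkin paths of length $N$ with no horizontal step at height $0$, weight $y$ on each down step, and weight $1+y$ on each horizontal step at positive height.

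Let $M(t)=\sum_N \bar Z_N|_{\ldots}\,t^N$. A first-return decomposition gives $M(t)=1/(1-yt^2H(t))$, where $H(t)$ generates unconstrained weighted Motzkin paths (weights $1,\,1+y,\,y$ for up, horizontal, down at every height) and itself satisfies $H=1/(1-(1+y)t-yt^2H)$. Eliminating $H$ yields the quadratic
\[
t(1+y+yt)\,M^2 \;-\; \bigl(1+(1+y)t\bigr)\,M \;+\; 1 \;=\; 0.
\]
On the Fine side, the arch decomposition of Dyck paths gives $C(t,y)=1/(1-A(t,y))$, where $A=ty+t(C-1)$ accounts for the length-$2$ arch $\nearrow\searrow$ and for longer arches $\nearrow D\searrow$ with $D$ a nonempty Dyck path; here $C(t,y)=\sum_n C_n(y)t^n$ with $C_n(y)=\sum_k N_{n,k}\,y^k$ the Narayana polynomial. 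This produces $tC^2-(1+t(1-y))C+1=0$. Since Fine paths are exactly Dyck paths in which every arch has length at least $4$, their generating function is
\[
F(t,y)\;=\;\frac{1}{1-(A-ty)}\;=\;\frac{C(t,y)}{1+ty\,C(t,y)}.
\]

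Substituting $C=F/(1-tyF)$ into the $C$-equation and simplifying yields exactly the same quadratic $t(1+y+yt)F^2-(1+(1+y)t)F+1=0$. Since $M(t)$ and $F(t,y)$ both satisfy this quadratic and both equal $1$ at $t=0$, we conclude $M(t)=F(t,y)$; extracting the coefficient of $t^N$ gives $\bar Z_N=F_N(y)$. The main hurdle is the routine-but-delicate algebraic verification that $F$ satisfies the same quadratic as $M$. An attractive alternative is to observe that the substitution $C=F/(1-tyF)$ is precisely the standard ``contraction'' of the $J$-fraction for $C(t,y)$ that suppresses length-$2$ arches, and the resulting continued fraction for $F(t,y)$ coincides term by term with the one for $M(t)$ read off from the weights above.
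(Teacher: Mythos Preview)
Your argument is correct and follows essentially the same route as the paper: specialize the Motzkin weights \eqref{Z_w} to get paths with no horizontal step at height~$0$, then match generating functions. The only real difference is in the last step: the paper relates the Motzkin generating function to the Narayana generating function $L(t,y)$ and obtains the functional relation $H=1+t(L-1)H$, which it then identifies with the equation for $\sum F_n(y)t^n$ given in \cite{DSh}, whereas you derive quadratic equations for both $M(t)$ and $F(t,y)$ from scratch and compare them. Your version is more self-contained; the paper's is shorter but leans on the cited reference. Your remark that the two branches of the quadratic are distinguished by the initial condition $M(0)=F(0,y)=1$ is the correct way to finish.
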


\begin{proof}
In this case we have $\bet=0$, $\alp=-1-y$. From the weights in the general case \eqref{Z_w}, 
we see that now $Z_N$ is the sum of weights of Motzkin paths such that:
\begin{itemize}
\item the weight of a step $\nearrow$ is 1, the weight of a step $\searrow$ is $y$,
\item the weight of a step $\rightarrow$ is $1+y$, but there is no such step at height 0.
\end{itemize}
Let $H(t,y)=\sum_{N\geq 0} Z_Nt^N$. It is such that $H(t,y) = 1+yt^2G(t,y)H(t,y)$, where
$G(t,y)$ counts the paths with the same weights but possibly with steps $\rightarrow$ at height
0. Let $L(t,y)=\sum t^{\ell(P)}y^{\pk(P)}$ where $\ell(P)$ is half the number of steps of $P$, 
and the sum is over all Dyck paths $P$. Some standards arguments show that $G(t,y)$ is linked
with Narayana numbers in such a way that $L(t,y)=1+ytG(t,y)$. 
So we have $H(t,y) = 1+t\big(L(t,y)-1\big)H(t,y)$, which is precisely the functional equation given 
in \cite[Section~7]{DSh} for the generating function $\sum F_n(y)t^n$. This completes the proof.
\end{proof}

\smallskip

When we substitute $y$ with $\frac1y$ in the Motzkin paths considered in the proof, we see that 
the weight of a step $\rightarrow$ is divided by $y$ and the weight of a step $\searrow$ is 
divided by $y^2$, so the total weight is divided by $y^n$ where $n$ is the length of the path. 
This proves the symmetry of the coefficients of $F_n(y)$.

\smallskip

Note that the symmetry of $Z_N$ obtained in this section is not a particular case of previously
known symmetry \eqref{Z_sym}. It may be a special case of another more general symmetry.

\section{Concluding remarks}

We have used two kinds of weighted Motzkin paths to study $Z_N$. The first kind are the 
elements of $\mathfrak{P}_N$, {\it i.e.} the paths coming from the matrices $D$ and $E$ 
defined in \eqref{defD} and \eqref{defE}. They have the property that the weight of a
step only depends on its direction and its height, so that there is a J-fraction expansion 
for the generating function $\sum_{N\geq0}Z_Nt^N$ with the four parameters $\alpha$, $\beta$, 
$y$ and $q$. The second kind of weighted Motzkin paths are the Laguerre histories, and their
nice property is that they are linked bijectively with permutations. One might ask if there is 
a set of weighted Motzkin paths having both properties, but its existence is doubtful. Still it 
could be nice to have a direct simple proof that these two kinds of paths give the same quantity
$Z_N$.

Our two new combinatorial interpretations in Theorems \ref{histoires} and \ref{main} 
complete the known combinatorial interpretations \eqref{ZTP} and \eqref{depart}, and this
makes at least four of them. Although all is proved bijectively, there is not a direct 
bijection for any pair of combinatorial interpretations. In particular it would be nice to 
have a more direct bijection between permutation tableaux and permutations to link the 
right-hand sides of \eqref{ZTP} and \eqref{main_int}, instead of composing four bijections
(Steingr\'imsson-Williams, reverse complement of inverse, Foata-Zeilberger and Françon-Viennot).
Permutation tableaux are mainly interesting because of their link with permutations, so in 
this regard it is desirable to have a direct bijection preserving the four parameters considered
here.

We have given evidence that the lattice paths are good combinatorial objects to study the
PASEP with three parameters. However, our combinatorial interpretation of $Z_N$ with the
Laguerre histories relies on the previous one with permutation tableaux. To complete the lattice 
paths approach, it might be interesting to have a direct derivation of stationary probabilities 
in terms of Laguerre histories. For example in \cite{CW1}, Corteel and Williams define a Markov 
chain on permutation tableaux which projects to the PASEP, similarly we could hope that there is 
an explicit simple description of such a Markov chain on Laguerre histories.

The three-parameter PASEP is now quite well understood since we have exact expressions for 
many interesting quantities. In a more general model, we allow particles to enter the 
rightmost site, and exit the leftmost site, so that there are five parameters. In this 
case the partition function is linked with the Askey-Wilson moments, in a similar manner 
that the three-parameter partition function is linked with Al-Salam-Chihara moments \cite{USW}. 
Recently, Corteel and Williams \cite{CW3} showed that there exist some staircase tableaux 
generalizing permutation tableaux, arising from this general model with five parameters. It 
is not clear whether a closed formula for the five-parameter partition function exists, and 
in the case it exists it might be unreasonably long. But knowing the results about the 
three-parameter partition function, we expect the five-parameter partition function to be 
quite full of combinatorial meaning.


\end{document}